\tikzset{middlearrow/.style={
		decoration={markings,
			mark= at position 0.5 with {\arrow{#1}} ,
		},
		postaction={decorate}
	}
}
\numberwithin{equation}{section}
\newtheorem{theorem}{Theorem}[section]
\newtheorem{lemma}[theorem]{Lemma}
\newtheorem{proposition}[theorem]{Proposition}
\newtheorem{definition}[theorem]{Definition}
\newtheorem{remark}[theorem]{Remark}
\newtheorem{example}[theorem]{Example}
\DeclareMathOperator*{\divg}{div}
\DeclareMathOperator*{\grad}{grad}
\DeclareMathOperator*{\argmin}{arg\,min}
\newcommand{\nnabla}{\nabla\!\!\!\!\nabla}
\newcommand{\llangle}{\langle\!\!\!\langle}
\newcommand{\rrangle}{\rangle\!\!\!\rangle}
\newcommand{\GGamma}{\Gamma \!\!\! \Gamma}
\newcommand{\dd}{\mathrm{d}}
\newcommand{\vol}{\mathrm{vol}}
\newcommand{\cexp}{c\text{-}\!\exp}
\newcommand{\R}{\mathbb{R}}
\newcommand{\X}{\mathcal{X}}
\newcommand{\Y}{\mathcal{Y}}
\newcommand{\M}{\mathcal{M}}
\newcommand{\cP}{\mathcal{P}}
\newcounter{sarrow}
\newcommand{\defeq}{\vcentcolon=}
\newcommand{\sop}{\ensuremath{\mathcal{P}}}
\def\indic#1{\ensuremath{\mathbbm{I}\left[{#1}\right]}} %
\definecolor{C0}{HTML}{1F77B4}
\definecolor{C1}{HTML}{FF7F0E}
\definecolor{C2}{HTML}{2CA02C}
\definecolor{C3}{HTML}{D62728}
\definecolor{C4}{HTML}{9467BD}
\definecolor{C5}{HTML}{8C564B}
\newcommand{\blue}[1]{\textcolor{C0}{#1}}
\newcommand{\Wt}{\mathscr{W}_2}
\newcommand{\BWD}{\mathscr{B}}
\newcommand{\Ms}{\mathbbm{M}} %
\newcommand{\D}{\mathcal{D}} %
\renewcommand{\L}{\mathcal{L}} %
\newcommand{\risk}{\mathcal{R}} %
\begin{document}

\title{Bregman-Wasserstein divergence: geometry and applications}

\author{Amanjit Singh Kainth}
\address{Department of Computer Science, University of Toronto}
\email{amanjitsk@cs.toronto.edu}

\author{Cale Rankin}
\address{School of Mathematics, Monash University}
\email{cale.rankin@monash.edu}

\author{Ting-Kam Leonard Wong}
\address{Department of Statistical Sciences, University of Toronto}
\email{tkl.wong@utoronto.ca}

\keywords{Bregman-Wasserstein divergence, Bregman divergence, Wasserstein distance, optimal transport, exponential family, information geometry, displacement interpolation, neural optimal transport, barycenter}

\begin{abstract}
The Bregman-Wasserstein divergence is the optimal transport cost when the underlying cost function is given by a Bregman divergence, and arises naturally in fields such as statistics and machine learning. We establish fundamental properties of the Bregman-Wasserstein divergence and propose a novel generalized transport geometry that promotes the Bregman geometry to the space of probability distributions. We provide a probabilistic interpretation involving exponential families and define generalized displacement interpolations compatible with the Bregman geometry. These interpolations are used to derive a generalized Pythagorean inequality, which is of independent interest. Furthermore, we construct a generalized dualistic geometry that lifts the differential geometry of the Bregman divergence to an infinite-dimensional statistical manifold. On the computational side, we demonstrate how Bregman-Wasserstein optimal transport maps can be estimated using neural approaches, establish the well-posedness of Bregman-Wasserstein barycenters, and relate them to Bayesian learning. Finally, we introduce the Bregman-Wasserstein JKO scheme for discretizing Riemannian Wasserstein gradient flows.%
\end{abstract}

\maketitle{}

\section{Introduction} \label{sec:intro}
{\it Statistical divergences} define distance-like quantities between probability distributions \cite[Chapter 8]{PC19} that are not necessarily symmetric. Statistical divergences based on  {\it optimal transport} (OT) \cite{V03, V08} are widely applied in statistics, machine learning, finance, among other fields:
\begin{itemize}
\item[(i)] as {\it loss functions} for estimation or training, as in the {\it minimum Kantorovich estimator} \cite{BBR06} and the {\it Wasserstein generative adversarial network} \cite{ACB17};
\item[(ii)] to quantify model uncertainty, e.g.~in {\it distributionally robust optimization} \cite{DSW24} and stochastic analysis \cite{BBP24};
\item[(iii)] to provide geometric structures for {\it distribution-valued data} or {\it processes} \cite{CLM23, PZ20}.
\end{itemize}
Recently, OT has also been applied in information theory to study network information theory, rate distortion and quantum information, see for e.g., \cite{bai2023information, chuang2023infoot, de2021quantum}. Currently, the most popular OT-based divergence is the {\it Euclidean $2$-Wasserstein distance} $\mathscr{W}_2$ (or its square $\mathscr{W}_2^2$), defined for probability distributions $\mu_0$ and $\mu_1$ on $\mathbb{R}^d$ with finite second moment by
\begin{equation} \label{eqn:2.Wasserstein}
\mathscr{W}_2^2(\mu_0, \mu_1) := \inf_{\pi \in \Pi(\mu_0, \mu_1)} \int_{\mathbb{R}^d \times \mathbb{R}^d} |x_0 - x_1|^2 \dd \pi(x_0, x_1),
\end{equation}
where $\Pi(\mu_0, \mu_1)$ is the set of couplings of $\mu_0$ and $\mu_1$. By definition, a {\it coupling} of $\mu_0$ and $\mu_1$ is a joint probability distribution whose first and second marginals are $\mu_0$ and $\mu_1$ respectively. The $2$-Wasserstein distance enjoys many remarkable properties, making it a natural or convenient choice in diverse applications. These include capturing the underlying Euclidean geometry by the quadratic cost function in \eqref{eqn:2.Wasserstein}, deep connections with convex analysis via {\it Brenier's theorem} \cite{B91}, explicit geodesics given by McCann's {\it displacement interpolation} \cite{M97}, {\it Otto's calculus} that reformulates various partial differential equations as {\it Wasserstein gradient flows} \cite{O01}, as well as readily available numerical algorithms \cite{PC19, S15}. However, the Euclidean $2$-Wasserstein distance has limitations. In particular, as a metric, it does not reflect possible asymmetry in $\mu_0$ and $\mu_1$ (note that the {\it Kullback-Leibler (KL) divergence} ${\bf H}(\mu_0||\mu_1)$ is generally asymmetric). We remark that non-Euclidean OT costs are now actively investigated by the machine learning community; see for example \cite{UC23} and the references therein.

\begin{table}[t!]
	\begin{tabular}{|c|c|} \hline
		{\bf Euclidean} & {\bf Bregman}  \\ \hline
		$\frac{1}{2}|x_0 - x_1|^2$ & ${\bf B}_{\Omega}(x_0, x_1)$\\ \hline
		Euclidean geometry: $g_{ij} = \delta_{ij}$ &  Dualistic geometry: $g_{ij} = D_{ij}\Omega$ \\ %
		   & $y = D \Omega(x)$, $x = D \Omega^*(y)$ \\
          & (Sections \ref{sec:Bregman.geometry} and \ref{sec:Bregman.dualistic}) \\
           \hline

		 normal location family & exponential family \\
         & (Section \ref{sec:exp.family}) \\
         \hline
         Euclidean barycenter & Bregman barycenter \\
         & (Section \ref{sec:barycenters}) \\ \hline \hline
         {\bf Euclidean $2$-Wasserstein} & {\bf Bregman-Wasserstein (new)} \\
         \hline
		 $\frac{1}{2}\mathscr{W}_2^2(\mu_0, \mu_1)$ & $\mathscr{B}(\mu_0, \mu_1)$ \\
         & (Section \ref{sec:BW.definition}) \\ \hline
		 McCann's & Primal and dual\\
		 displacement interpolation & displacement interpolations \\
         & (Section \ref{sec:interpolations}) \\ \hline
		 Second order Otto calculus  & Generalized dualistic geometry \\
         (Section \ref{sec:prelim-otto-lott}) & (Sections \ref{sec:BW.Otto}--\ref{sec:parallel.transport}) \\ \hline
     Wasserstein barycenter & Bregman-Wasserstein barycenter \\
     & (Section \ref{sec:barycenters}) \\ \hline
	\end{tabular}
    \caption{\footnotesize The Bregman-Wasserstein divergence lifts the geometry of Bregman divergence to the space of probability distributions, in such a way that many properties of $\mathscr{W}_2$ have natural analogues. For example, we extend McCann's displacement interpolation to define primal and dual displacement interpolations which are geodesics under the generalized dualistic geometry. We also provide pointers to the relevant sections.} \label{tab:analogies}
\end{table}

In this paper, we undertake an in-depth study of the {\it Bregman-Wasserstein divergence} which is the optimal transport cost when the (ground) cost function is a {\it Bregman divergence} \cite{B67}; in Section \ref{sec:literature} we provide a detailed discussion of the related literature. Let $\Omega$ be a differentiable, strictly convex function defined on a convex domain $\X \subset \R^d$. The Bregman divergence ${\bf B}_{\Omega}(\cdot, \cdot)$ of $\Omega$ is defined as the error term of its linear approximation:
\begin{equation} \label{eqn:Bregman.divergence}
\mathbf{B}_{\Omega}(x_0, x_1) := \Omega(x_0) - \Omega(x_1) - D \Omega(x_1) \cdot (x_0 - x_1) \geq 0, \quad x_0, x_1
\in \X,
\end{equation}
where $D \Omega$ and $\cdot$ denote, respectively, the Euclidean gradient and dot product. Given probability distributions $\mu_0^{\X}$ and $\mu_1^{\X}$ on $\X$,\footnote{The use of the superscript $\X$ will be explained in Section \ref{sec:BW.definition}.}, the {\it Bregman-Wasserstein divergence} $\mathscr{B}_{\Omega}(\mu_0^{\X}, \mu_1^{\X})$ is defined by
\begin{equation} \label{eqn:BW.div.intro}
\mathscr{B}_{\Omega}(\mu_0^{\X}, \mu_1^{\X}) := \inf_{\pi^{\X} \in \Pi(\mu_0^{\X}, \mu_1^{\X})} \int_{\X \times \X} \mathbf{B}_{\Omega}(x_0, x_1) \dd \pi^{\X}(x_0, x_1).
\end{equation}
When $\Omega$ is the squared Euclidean norm $|x|^2 := x \cdot x$, $\mathbf{B}_{\Omega}$ recovers the squared Euclidean distance $|x_0 - x_1|^2$, and hence $\mathscr{B}_{\Omega}$ reduces to $\mathscr{W}_2^2$.
Bregman divergences have been applied extensively in probability, information theory and geometry, statistics, machine learning, and optimization, among other fields, see for example \cite{A16, AN00, BMDG05, AM03, blondel2020learning, CDS02, EK22, RM15} and the references therein. The Bregman-Wasserstein divergence \eqref{eqn:BW.div.intro} lifts the Bregman divergence to the space of probability distributions. Our main contribution in this paper is to show that the Bregman-Wasserstein divergence provides a natural and tractable generalization of the $2$-Wasserstein distance $\mathscr{W}_2$, in the sense that many useful properties of $\mathscr{W}_2$, some mentioned above, can be extended to our Bregman-Wasserstein setting. See Table \ref{tab:analogies} for an overview of some of our theoretical results and extensions.

\subsection{Outline} \label{sec:outline}
The purpose of this paper is two-fold. First, we establish fundamental properties of the Bregman-Wasserstein divergence, including a novel geometric structure on the space of probability distributions. Second, we implement the Bregman-Wasserstein divergence using state-of-the-art algorithms and present applications in statistics and machine learning.\footnote{Code available at \url{https://github.com/amanjitsk/bregman-wasserstein}.}

In Section \ref{sec:BW.divergence}, we define the Bregman-Wasserstein divergence after reviewing the duality and geometry of the Bregman divergence. We characterize the optimal transport map using Brenier's theorem (Proposition \ref{prop:solving.BW.transport}); this result shows that existing algorithms for the $2$-Wasserstsein distance can be readily adapted to the Bregman-Wasserstein divergence (more in Section \ref{sec:neural.OT}). We also give a {\it probabilistic interpretation} of the Bregman-Wasserstein divergence, inspired by the {\it Schr\"{o}dinger bridge problem} \cite{leonard2012schrodinger}, when the underlying Bregman generator originates from an exponential family (Section \ref{sec:exp.family}).

In Section \ref{sec:interpolations}, we define {\it primal} and {\it dual displacement interpolations} (Definition \ref{def:primal.dual.displacement.interpolation}) which extends the primal and dual geodesics of Bregman geometry to distribution-valued curves. We relate them to the {\it $c$-convex geometry} in optimal transport (Section \ref{sec:c.convex}) and prove a {\it generalized Pythagorean inequality} (Theorem \ref{thm:BW.Pyth}), which is of independent interest.

In Section \ref{sec:gen-dual} we give an in-depth study of the dualistic geometry of Bregman divergence -- in the sense of {\it information geometry} \cite{A16, AN00} -- lifted to the space of probability distributions. Building on the approach of Otto \cite{O01} and Lott \cite{L08} reviewed in Section \ref{sec:prelim-otto-lott}, we define on a suitable space $\mathcal{P}^{\infty}(\M)$ of probability distributions on a Bregman manifold $\M$ a triple $(\mathfrak{g}, \nnabla, \nnabla^*)$, where $\mathfrak{g}$ is Otto's Riemannian metric and $(\nnabla, \nnabla^*)$ is a pair of affine connections on $\mathcal{P}^{\infty}(\M)$ (Definition \ref{defn:conjugate-connections-PM}) which are conjugate with respect to $\mathfrak{g}$. Thus we may regard $(\mathcal{P}^{\infty}(\M), \mathfrak{g}, \nnabla, \nnabla^*)$ as an infinite-dimensional {\it statistical manifold} \cite{lauritzen1987statistical}. We also study the parallel transports and sectional curvatures under our generalized dualistic geometry, and show that our primal and dual displacement interpolations can be regarded as geodesics with respect to $\nnabla$ and $\nnabla^*$.

In Section \ref{sec:applications} we turn our attention to implementation and applications of the Bregman-Wasserstein divergence. We include several numerical experiments to showcase the utility of the Bregman-Wasserstein divergence. First, we adapt the {\it Neural Optimal Transport (NOT)} approach of \cite{{buzun2024erf}} to estimate optimal transport maps and primal/dual
displacement interpolations trained with neural networks (Section \ref{sec:neural.OT}). Second, we introduce and study \textit{Bregman-Wasserstein barycenters}: we prove their well-posedness, demonstrate their use in Bayesian statistics following the approach of \cite{backhoff-veraguas2022blw}, and use entropically regularized transport to compute example barycenters (Section \ref{sec:barycenters}). Third, we propose the {\it Bregman-Wasserstein JKO scheme} as a feasible discretization of Riemannian Wasserstein gradient flows (Section \ref{sec:Wasserstein.gradient.flow}), and state a convergence result from our follow-up work \cite{RW24}. Finally, we conclude and discuss promising future directions in Section \ref{sec:conclusion}. Further details of our numerical implementations can be found in \ref{sec:implementation.detail}. Some technical proofs are collected in Appendices \ref{sec:BW.barycenter.proof} and \ref{sec:appendix.proofs}.

\subsection{Related literature} \label{sec:literature}
There is a growing literature that explores optimal transport with Bregman-type costs, as well as Bregman-type divergences between probability distributions. It is clear from \eqref{eqn:BW.div.intro} that the Bregman-Wasserstein divergence is different from the integration of a univariate Bregman divergence between probability densities, as in the {\it $U$-divergence} introduced in \cite{murata2004information}. In optimal transport, the first use of the Bregman-Wasserstein divergence we are aware of is \cite{carlier2007monge} whose authors studied the symmetrized Bregman divergence ${\bf B}_{\Omega}(x_0, x_1) + {\bf B}_{\Omega}(x_1, x_0)$ and noted that Brenier's theorem \cite{B91} for the quadratic transport can be applied to the Bregman cost after a change of variables (see Proposition \ref{prop:solving.BW.transport} for a precise statement). In \cite{GHY17}, the Bregman-Wasserstein divergence was introduced to construct ambiguity sets for distributionally robust optimization;\footnote{In \cite{GHY17} the term Wasserstein-Bregman divergence is used, while in some other papers it is called the Bregman transport cost. Here we call it the {\it Bregman-Wasserstein divergence} to emphasize that it lifts the Bregman divergence to probability distributions.} another application in the training of generative adversarial networks (GANs) was given in \cite{guo2021relaxed}. In \cite{CE17}, transport inequalities in relation to log-concave distributions were studied. A Bregman-Wasserstein divergence was used to analyze a barotropic Navier-Stokes system in \cite{fanelli2020statistical}. More recently, the Bregman-Wasserstein divergence was used in \cite{AC21} to quantify the convergence of the mirror Langevin algorithm for sampling from probability distributions, to construct ambiguity sets in risk management~\cite{PV23, PVYY24} and derive optimization algorithms on the Wasserstein space \cite{bonet2024mirror, DKPS23, RW24}. We also note that a different approach was adopted in \cite{L21} to construct Bregman-type divergences on the (usual) Wasserstein space via displacement convex functionals.

\medskip
\noindent
{\it Notations.} We list in Table \ref{ta:notation} some notations used throughout the paper. We adopt the Einstein summation convention unless otherwise stated. That is, a sum over an index is implied if it appears both as a subscript and a superscript, e.g.~$g_{ij}v^iv^j = \sum_{i, j} g_{ij}v^iv^j$.

\begin{table}[h!]
\begin{center}
\begin{tabular}{l|l}

  Symbol & Meaning\\ \hline
  $D, D^2$ & Euclidean gradient and Hessian \\
  $D_i, D_{ij}$ & Euclidean partial derivatives \\
  $a \cdot b$ & Euclidean dot product \\
  $\mathcal{M}$ & (Riemannian or Bregman) manifold\\
${\bf B}$ & Bregman divergence on $\mathcal{M}$\\
  $g \text{ or } \langle\cdot,\cdot\rangle$ & Riemannian metric on $\mathcal{M}$\\
$\text{grad}$ & Riemannian gradient \\
$\text{div}$ & Riemannian divergence \\
  $\overline{\nabla}$ & Levi-Civita connection on $\mathcal{M}$ \\
  $\nabla, \nabla^*$ & Primal and dual connection on $\mathcal{M}$ \\
$\mathscr{B}$ & Bregman-Wasserstein divergence on $\mathcal{P}(\mathcal{M})$\\
$\mathcal{P}^{\infty}(\mathcal{M})$ & Space of probability measures on $\mathcal{M}$\\
& with suitable regularity conditions\\
  $\mathfrak{g} \text{ or } \llangle \cdot , \cdot \rrangle$ & Otto's Riemannian metric on $\mathcal{P}^{\infty}(\mathcal{M})$\\
  $\overline{\nnabla}$ & Levi-Civita connection on $\mathcal{P}^{\infty}(\mathcal{M})$\\
  $\nnabla, \nnabla^*$ & Primal and dual connections on $\mathcal{P}^{\infty}(\mathcal{M})$
\end{tabular}
\end{center}
\caption{\footnotesize List of notations.}\label{ta:notation}
\end{table}

\section{Bregman-Wasserstein divergence} \label{sec:BW.divergence}
After reviewing the Bregman divergence and its geometry, we define the Bregman-Wasserstein divergence and provide two comparisons with $2$-Wasserstein distances: the Euclidean one and a {\it Riemannian} one. Both viewpoints will be helpful in our later development. Then, we give a probabilistic interpretation when the underlying Bregman divergence originates from an exponential family.

\subsection{Bregman geometry} \label{sec:Bregman.geometry}
We refer the reader to \cite{A01} where further details about the Bregman divergence can be found. We take for granted some results from convex analysis \cite{R70} and definitions (e.g.~manifold, tangent vector and Riemannian metric) from differential geometry \cite{L18}.

\medskip

Let $\X$ be an open convex subset of $\mathbb{R}^d$, $d \geq 1$. We will refer to $\X$ as the {\it primal domain}.

\begin{definition} [Regular Bregman generator] \label{def:Bregman.generator}
A convex function $\Omega: \X \rightarrow \mathbb{R}$ is said to be a regular Bregman generator on $\X$ if (i) $\Omega$ is smooth,\footnote{The smoothness condition is used mainly to simplify the exposition. For most results it is sufficient to assume that $\Omega$ is $C^2$.} (ii) the Hessian $D^2 \Omega$ is strictly positive definite everywhere on $\X$, and (iii) the range $\Y := D \Omega(\X) \subset \mathbb{R}^d$ of the gradient map is convex.
\end{definition}

Henceforth we let $\Omega$ be a regular Bregman generator on $\X$. Then, the {\it mirror map} $D \Omega$ is a diffeomorphism from $\X$ onto its range $\mathcal{Y}$; the set $\mathcal{Y}$, which we refer to as the {\it dual domain}, is also an open set which is convex by (iii). The inverse of $D \Omega$ is given on $\Y$ by $(D \Omega)^{-1} = D \Omega^*$, where
\[
\Omega^*(y) = x \cdot y - \Omega(x), \quad y = D\Omega(x) \in \Y,
\]
is the {\it Legendre transformation} of $\Omega$. One can verify that $\Omega^*$ is a regular Bregman generator on $\Y$. Thus, $(\X, \Omega)$ and $(\Y, \Omega^*)$ play symmetric roles in our theory. For $x \in \X$ and $y \in \Y$, we have the {\it Fenchel-Young inequality}
\begin{equation*}
\Omega(x) + \Omega^*(y) - x \cdot y \geq 0,
\end{equation*}
with equality if and only if $y = D \Omega(x)$ or, equivalently, $x = D\Omega^*(y)$.

Consider the Bregman divergence ${\bf B}_{\Omega}: \X \times \X \rightarrow \mathbb{R}_+$ of $\Omega$ defined by \eqref{eqn:Bregman.divergence}. From the strict convexity of $\Omega$ we see that ${\bf B}_{\Omega}(x, x') \geq 0$ with equality if and only if $x = x'$. However, ${\bf B}_{\Omega}$ is generally asymmetric and hence is not (a transformation of) a metric. Using the identity $\Omega(x) + \Omega^*(y) \equiv x \cdot y$ which holds for $x \in \X$ and $y = D\Omega (x) \in \Y$, we may express
\begin{equation} \label{eqn:Bregman.self.dual}
{\bf B}_{\Omega}(x, x') = \Omega(x) + \Omega^*(y') - x \cdot y' = {\bf B}_{\Omega^*}(y', y),
\end{equation}
where $x, x' \in \X$ and $y = D\Omega(x), y' = D \Omega(x') \in \Y$. We call \eqref{eqn:Bregman.self.dual} the {\it self-dual representation} of Bregman divergence \cite[Theorem 1.1]{A16}.

\begin{figure}[t!]
	\begin{tikzpicture}[scale = 0.65]
		\draw (0,0) ellipse (1.7 and 1.25);
		\node [above] at (0, 1.31) {\footnotesize Bregman manifold $\M$};

		\node[circle, draw=black, fill = black, inner sep=0pt, minimum size=3pt, label = right: {\footnotesize $p$}] at (-0.7, 0.1)  {};
		\draw (-4,-3.7) ellipse (1.7 and 1.25);

		\node [below] at (-4, -5) {\footnotesize primal domain $\X$};

		\node[circle, draw=black, fill = black, inner sep=0pt, minimum size=3pt, label = below: {\footnotesize $x_p$}] at (-4.5, -3.5)  {};
		\draw (4, -3.7) ellipse (1.7 and 1.25);
		\node [below] at (4, -5) {\footnotesize dual domain $\Y$};
		\node[circle, draw=black, fill = black, inner sep=0pt, minimum size=3pt, label = right: {\footnotesize $y_p$}] at (4.1, -3.6)  {};
		\draw[->] (-2.1, -3.6) -- (2.1, -3.6);
		\node[above] at (0, -3.6) {\footnotesize $D\Omega$};
		\draw[->] (2.1, -3.9) -- (-2.1, -3.9);
		\node[below] at (0.12, -3.9) {\footnotesize $D\Omega^*$};

		\draw[->] (-1.6, -1.2) -- (-3.3, -2.3);
		\node[left] at (-2.2, -1.4) {\footnotesize $\iota$};
		\draw[->] (1.6, -1.2) -- (3.3, -2.3);
		\node[right] at (2.2, -1.4) {\footnotesize $\iota^*$};

	\end{tikzpicture}
	\caption{\footnotesize A Bregman generator $\Omega$ induces two coordinate systems related by the mirror map $D\Omega$. Here $\iota: \M\rightarrow \X$ and $\iota^* : \M \rightarrow \Y$ are, respectively, the primal and dual coordinate maps, i.e., $\iota(p) = x_p$ and $\iota^*(p) = y_p$. They are related by $\iota^* = D\Omega \circ \iota$ or equivalently $\iota = D\Omega^* \circ \iota^*$.}%
 \label{fig:Bregman.manifold}
\end{figure}

Motivated by the diffeomorphism $D\Omega : \X \rightarrow \Y$ and the identity \eqref{eqn:Bregman.self.dual}, it is helpful to think of $x$ and $y = D \Omega(x)$ as {\it alternative representations of the same point}. Explicitly, we introduce a $d$-dimensional smooth manifold $\M$ which as a set is equal to $\X$ or $\Y$, on which $x \in \X$ (the {\it primal variable}) and $y := D\Omega(x) \in \Y$ (the {\it dual variable}) serve as global coordinates; see Figure \ref{fig:Bregman.manifold}. We let $\iota: \M \rightarrow \X$ and $\iota^* : \M \rightarrow \Y$ be the {\it primal} and {\it dual coordinate maps}. For $p \in \M$, let $x_p := \iota(p) \in \X$ and $y_p := \iota^*(p) = D\Omega(x_p) \in \Y$ be respectively the {\it primal} and {\it dual coordinates} of $p$. Using \eqref{eqn:Bregman.self.dual}, we define a divergence ${\bf B}: \M \times \M \rightarrow \mathbb{R}_+$ by
\begin{equation} \label{eqn:M.divergence}
{\bf B}(p, q) := \Omega(x_p) + \Omega^*(y_q) - x_p \cdot y_q.
\end{equation}
From \eqref{eqn:Bregman.self.dual}, ${\bf B}_{\Omega}$ and ${\bf B}_{\Omega^*}$ become coordinate representations of ${\bf B}$, which we call the {\it Bregman divergence} on $\M$. We equip $\M$ with the primal and dual coordinates and the divergence ${\bf B}$; we call it the {\it Bregman manifold}.

\begin{remark}
Although it is common to think of $\X$ and $\Y$ as separate spaces as in Figure \ref{fig:Bregman.manifold}, it is also helpful to regard both as subsets of a {\it single} ambient Euclidean space $\R^d$. Completing the square in \eqref{eqn:M.divergence}, we may express
\begin{equation} \label{eqn:Bregman.as.Euclidean.shifted}
\begin{split}
{\bf B}(p, q) &= \frac{1}{2}|x_p - y_q|^2 + \left(\Omega(x_p) - \frac{1}{2}|x_p|^2\right) + \left( \Omega^*(y_q) - \frac{1}{2}|y_q|^2\right) \\
&=: \frac{1}{2}|x_p - y_q|^2 + \widetilde{\Omega}(x_p) + \widetilde{\Omega^*}(y_q).
\end{split}
\end{equation}
That is, the Bregman divergence between $p$ and $q$ is equal to half of the {\it Euclidean} squared distance between $x_p$ and $y_q$ (in so-called {\it mixed coordinates}), shifted by a term involving only $p$ and a term involving only $q$. These shifts are needed since $p = q$ does not imply $|x_p - y_q| = 0$. In Section \ref{sec:BW.definition}, we will see that this identity allows us to relate the Bregman-Wasserstein divergence with the Euclidean $2$-Wasserstein distance.
\end{remark}

Next we describe the geometry of Bregman divergence which consists of a Riemannian metric and two notions of geodesics. A complete differential-geometric description, which allows us to generalize to the space of probability distributions, involves the {\it dualistic structure} in information geometry and will be reviewed in Section \ref{sec:Bregman.dualistic}.

Although the divergence  ${\bf B}$ is generally non-Euclidean, it is ``locally quadratic''. Specifically, given a tangent vector $v \in T_p\M$, the expression
\begin{equation} \label{eqn:M.metric}
\|v\|_p^2 = \left. \frac{\dd^2 }{\dd t^2} \right|_{t = 0} {\bf B}(\gamma_0, \gamma_t) = \left. \frac{\dd^2 }{\dd t^2} \right|_{t = 0} {\bf B}(\gamma_t, \gamma_0),
\end{equation}
where $\gamma$ is a curve with $\gamma(0) = p$ and $\dot{\gamma}(0) = v$, defines via the polarization identity a Riemannian metric $g$ on $\M$. Thus $\textbf{B}$ is not a metric but is locally approximated by a squared Riemannian distance. In coordinates, we have
\begin{equation} \label{eqn:Bregman.metric}
g\left( \left. \frac{\partial}{\partial x^i} \right|_p, \left. \frac{\partial}{\partial x^j} \right|_p\right) = D_{ij} \Omega(x_p), \quad g\left( \left. \frac{\partial}{\partial y^i} \right|_p, \left. \frac{\partial}{\partial y^j} \right|_p\right) = D_{ij} \Omega^*(y_p).
\end{equation}
Riemannian metrics of the form \eqref{eqn:Bregman.metric} are known as {\it Hessian} \cite{SY97}. In fact, the mirror descent algorithm can be regarded as a discretized Riemannian gradient flow with respect to $g$, see \cite{RM15}. We also note that
\begin{equation} \label{eqn:metric.inverse}
D^2 \Omega^*(y_p) = \left( D^2 \Omega(x_p) \right)^{-1}, \quad p \in \M.
\end{equation}

By a {\it primal geodesic} we mean a curve $(\gamma_t)_{0 \leq t \leq 1}$ on $\M$ which is a straight line under the primal coordinates, i.e., $x_{\gamma_t} = \iota(\gamma_t) = (1 - t)x_{\gamma_0} + tx_{\gamma_1} \in \X$. Using the dual coordinate system, we define analogously the {\it dual geodesic} given by $y_{\gamma_t} = \iota^*(\gamma_t) = (1 - t) y_{\gamma_0} + t y_{\gamma_1} \in \Y$. Intuitively, two kinds of geodesics are needed to capture the asymmetry of the Bregman divergence. An important consequence of \eqref{eqn:metric.inverse} is that the primal and dual coordinate vector fields are {\it biorthogonal} with respect to $g$:
\begin{equation} \label{eqn:biorthogonal}
g\left( \frac{\partial}{\partial x^i}\Big|_p, \frac{\partial}{\partial y^j}\Big|_p \right) = \left\{\begin{array}{lr}
        1, & \text{if } i = j;\\
        0, & \text{if } i \neq j.
\end{array}\right.
\end{equation}
It follows that if $u = a^i \frac{\partial}{\partial x^i}|_p$ and $v = b^j \frac{\partial}{\partial y^j}|_p$ are tangent vectors at $p$, then $g(u, v) = \sum_i a^ib^i = a \cdot b$. Thus, Riemannian computations are substantially simplified by using both the primal and dual coordinate systems.

\begin{remark}
Unless the Bregman generator $\Omega$ is quadratic, the primal and dual geodesics are {\it not} Riemannian geodesics (with respect to the Levi-Civita connection $\overline{\nabla}$ induced by the metric $g$), and are not distance-minimizing. The primal and dual geodesics are motivated by their naturalness in applications. For example, they show up naturally in the generalized Pythagorean theorem (see Theorem \ref{thm:Bregman.Pyth} below) which characterizes projections with respect to the Bregman divergence. Also, in the context of exponential family (see Section \ref{sec:exp.family}), the primal and dual geodesics correspond respectively to the exponential and mixture interpolations.
\end{remark}

The {\it generalized Pythagorean theorem} is fundamental in applications of the Bregman divergence. In Section \ref{sec:Pyth} we will obtain a generalized Pythagorean {\it inequality} for the Bregman-Wasserstein divergence.

\begin{theorem}[Generalized Pythagorean theorem for Bregman divergence] \label{thm:Bregman.Pyth}
Let $p \in \M$. Let $(\gamma_t)_{0 \leq t \leq 1}$ be a primal geodesic with $\gamma_0 = p$ and $(\sigma_t)_{0 \leq t \leq 1}$ be a dual geodesic with $\sigma_0 = p$. Then for $0 \leq t \leq 1$ we have
\begin{equation} \label{eqn:Bregman.Pyth}
{\bf B}(p, \sigma_t) + {\bf B}(\gamma_t, p) - {\bf B}(\gamma_t, \sigma_t) = t^2 g(\dot{\gamma}_0, \dot{\sigma}_0).
\end{equation}
In particular, the generalized Pythagorean relation ${\bf B}(p, \sigma_t) + {\bf B}(\gamma_t, p) = {\bf B}(\gamma_t, \sigma_t)$ holds if and only if $\gamma$ and $\sigma$ are orthogonal at $p$.
\end{theorem}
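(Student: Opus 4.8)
The plan is to reduce the identity to a one-line algebraic computation by exploiting the self-dual representation \eqref{eqn:M.divergence}, namely $\mathbf{B}(p,q) = \Omega(x_p) + \Omega^*(y_q) - x_p \cdot y_q$, in which the first argument enters \emph{only} through its primal coordinate and the second \emph{only} through its dual coordinate. Write $\bar{x} := x_p$ and $\bar{y} := y_p = D\Omega(\bar{x})$. By the coordinate description of primal and dual geodesics, the primal geodesic $\gamma$ with $\gamma_0 = p$ satisfies $x_{\gamma_t} = \bar{x} + t a$ with $a := x_{\gamma_1} - \bar{x}$, and the dual geodesic $\sigma$ with $\sigma_0 = p$ satisfies $y_{\sigma_t} = \bar{y} + t b$ with $b := y_{\sigma_1} - \bar{y}$. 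The initial velocities are then $\dot{\gamma}_0 = a^i \frac{\partial}{\partial x^i}\big|_p$ and $\dot{\sigma}_0 = b^j \frac{\partial}{\partial y^j}\big|_p$, so the biorthogonality relation \eqref{eqn:biorthogonal} gives $g(\dot{\gamma}_0, \dot{\sigma}_0) = a \cdot b$. It therefore suffices to show the left-hand side of \eqref{eqn:Bregman.Pyth} equals $t^2\, a \cdot b$.

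Next I would expand each of the three divergences via \eqref{eqn:M.divergence}. Since $p$ has primal coordinate $\bar{x}$ and dual coordinate $\bar{y}$, while $\gamma_t$ contributes primal coordinate $\bar{x} + ta$ and $\sigma_t$ contributes dual coordinate $\bar{y} + tb$, we obtain
\[
\begin{gathered}
\mathbf{B}(p, \sigma_t) = \Omega(\bar{x}) + \Omega^*(\bar{y} + tb) - \bar{x} \cdot (\bar{y} + tb), \\
\mathbf{B}(\gamma_t, p) = \Omega(\bar{x} + ta) + \Omega^*(\bar{y}) - (\bar{x} + ta) \cdot \bar{y}, \\
\mathbf{B}(\gamma_t, \sigma_t) = \Omega(\bar{x} + ta) + \Omega^*(\bar{y} + tb) - (\bar{x} + ta) \cdot (\bar{y} + tb).
\end{gathered}
\]
Forming $\mathbf{B}(p,\sigma_t) + \mathbf{B}(\gamma_t, p) - \mathbf{B}(\gamma_t, \sigma_t)$, the two occurrences of $\Omega(\bar{x} + ta)$ cancel, as do those of $\Omega^*(\bar{y} + tb)$, leaving only $\Omega(\bar{x}) + \Omega^*(\bar{y})$ together with the collection of dot-product terms.

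Finally I would invoke the Fenchel-Young equality $\Omega(\bar{x}) + \Omega^*(\bar{y}) = \bar{x} \cdot \bar{y}$, valid because $\bar{y} = D\Omega(\bar{x})$. The remainder is purely algebraic: after substituting this equality, every surviving contribution of the form $\bar{x} \cdot \bar{y}$, $t\,\bar{x} \cdot b$, and $t\, a \cdot \bar{y}$ cancels in pairs, and the sole term that persists is $t^2\, a \cdot b$. This establishes \eqref{eqn:Bregman.Pyth}, and the stated Pythagorean relation follows at once by setting $g(\dot{\gamma}_0, \dot{\sigma}_0) = 0$ when $\gamma$ and $\sigma$ are orthogonal at $p$.

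I do not anticipate a genuine obstacle: the entire content is the bookkeeping that forces the $\Omega$ and $\Omega^*$ terms to cancel, an arrangement built into the self-dual representation through its pairing of the primal coordinate of the first slot with the dual coordinate of the second. The only point demanding care is to track which coordinate each argument contributes, since interchanging them would destroy the cancellation; and to recover $g(\dot{\gamma}_0, \dot{\sigma}_0) = a \cdot b$ from biorthogonality rather than attempting to evaluate $g$ in a single coordinate chart, where $\dot{\gamma}_0$ and $\dot{\sigma}_0$ are naturally expressed in different bases.
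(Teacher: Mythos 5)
Your proof is correct and is essentially identical to the paper's own argument: both write the geodesics in coordinates as $x_{\gamma_t}=x_p+ta$, $y_{\sigma_t}=y_p+tb$, expand the three divergences via the self-dual representation \eqref{eqn:Bregman.self.dual}, cancel using the Fenchel--Young identity $\Omega(x_p)+\Omega^*(y_p)=x_p\cdot y_p$, and identify $a\cdot b$ with $g(\dot{\gamma}_0,\dot{\sigma}_0)$ via biorthogonality \eqref{eqn:biorthogonal}. No differences worth noting.
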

\begin{proof}
We recall the proof which is simple (once formulated) and instructive. By construction of $\gamma$ and $\sigma$, we may write in coordinates
\[
x_{\gamma_t} = x_p + ta \in \X, \quad y_{\sigma_t} = y_p + tb \in \Y,
\]
where $\dot{\gamma}_0 = a^i \frac{\partial}{\partial x^i}|_p$ and $\dot{\sigma}_0 = b^j \frac{\partial}{\partial y^j}|_p$ are tangent vectors at $p$.

By the self-dual representation \eqref{eqn:Bregman.self.dual} of ${\bf B}$ and the Fenchel-Young identity $\Omega(x) + \Omega^*(y) \equiv x \cdot y$, we obtain after some algebraic simplifications
\begin{equation} \label{eqn:Bregman.Pyth.identity}
\begin{split}
&{\bf B}(p, \sigma_t) + {\bf B}(\gamma_t, p) - {\bf B}(\gamma_t, \sigma_t)\\
&= \left( \Omega(x_p) + \Omega^*(y_{\sigma_t}) - x_p \cdot y_{\sigma_t} \right) + \left( \Omega(x_{\gamma_t}) + \Omega^*(y_p) - x_{\gamma_t} \cdot y_p \right) \\
&\quad - \left( \Omega(x_{\gamma_t}) + \Omega^*(y_{\sigma_t}) - x_{\gamma_t} \cdot y_{\sigma_t} \right) \\
&= t^2 a \cdot b = g(\dot{\gamma}_0, \dot{\sigma}_0),
\end{split}
\end{equation}
where the last equality follows from \eqref{eqn:biorthogonal}.
\end{proof}

We end this subsection with two examples of Bregman generators. More examples can be found in \cite{A16, BMDG05} and the references therein.

\begin{example}[Finite simplex] \label{eg:simplex.cgf}
Let $\X = \mathbb{R}^d$. The function
\begin{equation} \label{eqn:simplex.cgf}
\Omega(x) = \log \left(1 + \sum_{i = 1}^d e^{x^i} \right)
\end{equation}
defines a regular Bregman generator on $\X$. The dual variable is given by
\[
y = D \Omega(x) = \left( \frac{e^{x^1}}{1 + \sum_{j = 1}^d e^{x^j}}, \ldots, \frac{e^{x^d}}{1 + \sum_{j = 1}^d e^{x^j}} \right),
\]
and takes values in $\Y = D \Omega(\X) = \left\{ y \in (0, 1)^d: \sum_{i = 1}^d y^i < 1 \right\}$. Note that if $y^0 = 1 - \sum_{i = 1}^d y^i$ then $\{(y^0, \ldots, y^d): y \in \Y\}$ is the open unit simplex in $\mathbb{R}^{1 + d}$. In fact, $x$ is the natural parameter of the simplex regarded as an exponential family (see Section \ref{sec:exp.family}), and is given by $x^i = \log \frac{y^i}{y^0}$. The dual function $\Omega^*$ is the negative Shannon entropy given by
\[
\Omega^*(y) = \sum_{i = 0}^d y^i \log y^i, \quad y \in \Y.
\]
By a straightforward computation, we see that
\begin{equation} \label{eqn:simplex.KL}
{\bf B}(p, q) = {\bf B}_{\Omega^*}(y_q, y_p) = \sum_{i = 0}^d y_q^i \log \frac{y_q^i}{y_p^i} = {\bf H}(y_q || y_p),
\end{equation}
which is the (reverse) KL-divergence on the simplex. The induced Riemannian metric is the Fisher-Rao metric on the simplex.
\end{example}

\begin{example}[Hopfield neural network] \label{eg:Hopfield}
Let $\X = \mathbb{R}^d$ and let $\sigma_1, \ldots, \sigma_d : \mathbb{R} \rightarrow (0, 1)$ be increasing diffeomorphisms. By direct integration, we can construct a Bregman generator $\Omega$ on $\X$ whose mirror map is given by $D \Omega(x) = (\sigma_1(x^1), \ldots, \sigma_d(x^d))$. In \cite{HCTM20}, the transformations $\sigma_i$ are regarded as the activation functions of a continuous-time continuous-state Hopfield neural network. Under the primal coordinate system, the coefficients of the induced metric $g$ has diagonal form and is given by $g_{ij}(x) = \delta_{ij} \sigma_i'(x^i)$.
\end{example}

\subsection{Bregman-Wasserstein divergence: definition and first
properties} \label{sec:BW.definition}
Let $\M$ be a Bregman manifold with Bregman divergence ${\bf B}$. Given a topological space $E$ we let $\cP(E)$ be the space of Borel probability measures on $E$. Analogous to points, each probability measure on $\M$ has a primal representation
and a dual representation. Given $\mu \in \cP(\M)$, we let the pushforwards\footnote{If $\mu$ is a measure and $T$ is a measurable mapping defined on the state space of $\mu$, the {\it pushforward} $T_{\#}\mu$ is defined by $T_{\#} \mu = \mu \circ T^{-1}$.} $\mu^{\X} = \iota_{\#} \mu \in \cP(\X)$ and $\mu^{\Y} = \iota_{\#}^* \mu \in \cP(\Y)$ be respectively the {\it primal} and {\it dual representations} of $\mu$. Since $\iota^* = D \Omega \circ \iota$, we have
\begin{equation} \label{eqn:measure.mirror.map}
\mu^{\Y} = (D \Omega)_{\#} \mu^{\X}.
\end{equation}
We may regard this as the {\it mirror map} for distributions. It is used for example in the definition of the {\it mirrored Langevin dynamics} \cite{hsieh2018mirrored}.

We first give the coordinate-free definition of the Bregman-Wasserstein divergence, then state its primal and dual representations.
Given $\mu_0, \mu_1 \in \cP(\M)$, consider the Monge-Kantorovich optimal transport problem with the non-negative and smooth cost function $c(p, q) = {\bf B}(p, q)$. (For introductions to optimal transport see \cite{S15, V03, V08}.) Recall that given probability measures $\mu_0$ and $\mu_1$ on possibly different measurable spaces, the set of {\it couplings} $\Pi(\mu_0, \mu_1)$ of $(\mu_0, \mu_1)$ is defined by the set of all probability measures on the product space with marginals $\mu_0$ and $\mu_1$.

\begin{definition}[Bregman-Wasserstein divergence] \label{def:BW.div}
For $\mu_0, \mu_1 \in \cP(\M)$, we define the Bregman-Wasserstein divergence by
\begin{equation} \label{eqn:Bregman.Wasserstein.divergence}
\mathscr{B}(\mu_0, \mu_1) := \inf_{\pi \in \Pi(\mu_0, \mu_1)} \int_{\M \times \M} {\bf B}(p, q) \mathrm{d} \pi(p, q).
\end{equation}
\end{definition}

From the self-dual representation \eqref{eqn:Bregman.self.dual}, we have
\begin{equation} \label{eqn:BW.divergence.coordinates}
\begin{split}
&\mathscr{B}(\mu_0, \mu_1)\\
&=\inf_{\pi^{\X} \in \Pi(\mu_0^{\X}, \mu_1^{\X})} \int_{\X \times \X} {\bf B}_{\Omega}(x, x') \dd \pi^{\X}(x, x') =: \mathscr{B}_{\Omega}(\mu_0^{\X}, \mu_1^{\X}) \\
&= \inf_{\pi^{\Y} \in \Pi(\mu_1^{\Y}, \mu_0^{\Y})} \int_{\Y \times \Y} {\bf B}_{\Omega^*}(y', y) \dd \pi^{\Y}(y', y) =: \mathscr{B}_{\Omega^*}(\mu_1^{\Y}, \mu_0^{\Y}),
\end{split}
\end{equation}
where $\mathscr{B}_{\Omega}(\mu_0^{\X}, \mu_1^{\X})$ is the {\it primal representation} and $\mathscr{B}_{\Omega^*}(\mu_1^{\Y}, \mu_0^{\Y})$ is the {\it dual representation}. The identity $\mathscr{B}_{\Omega}(\mu_0^{\X}, \mu_1^{\X}) \equiv \mathscr{B}_{\Omega^*}(\mu_1^{\Y}, \mu_0^{\Y})$ captures the asymmetry of the Bregman-Wasserstein divergence. The coordinate-free expression \eqref{eqn:Bregman.Wasserstein.divergence} is handy in our theoretical development. The primal and dual representations \eqref{eqn:BW.divergence.coordinates} are indispensable for computation and implementation (see Section \ref{sec:applications}).

\begin{example}[Self-dual case]\label{eg:reduces.to.Euclidean.W2}
Let $\X = \mathbb{R}^d$ and $\Omega(x) = \frac{1}{2}|x|^2$. Since $\Omega^* = \Omega$ and $D\Omega = \mathrm{Id}$, we have $\X = \Y$ and
\begin{equation} \label{eqn:BW.quadratic.case}
\mathscr{B}(\mu_0, \mu_1) = \frac{1}{2} \mathscr{W}_2^2(\mu_0^{\X}, \mu_1^{\X}) = \frac{1}{2}\mathscr{W}_2^2(\mu_0^{\Y}, \mu_1^{\Y}),
\end{equation}
which is one half of the squared $2$-Wasserstein distance.
\end{example}

\begin{example}
Consider the KL-divergence on the simplex as in Example \ref{eg:simplex.cgf}. The induced Bregman-Wasserstein divergence may be regarded as a ``transport KL-divergence'' between probability measures on the simplex, and will be used as a key example throughout the paper.
\end{example}

\subsubsection{Comparison with the Euclidean $2$-Wasserstein distance}
Consider the optimal transport problem \eqref{eqn:Bregman.Wasserstein.divergence} whose cost function is the Bregman divergence ${\bf B}$. Recall from \eqref{eqn:Bregman.as.Euclidean.shifted} the identity
\begin{equation} \label{eqn:Bregman.as.Euclidean.shifted2}
{\bf B}(p, q) = \frac{1}{2}|x_p - y_q|^2 + \widetilde{\Omega}(x_p) + \widetilde{\Omega^*}(y_q),
\end{equation}
where $\widetilde{\Omega}(x_p) = \Omega(x_p) - \frac{1}{2}|x_p|^2$ depends only on $p$ and $\widetilde{\Omega^*}(y_q) = \Omega^*(y_q) - \frac{1}{2}|y_q|^2$ depends only on $q$. For any coupling $\pi \in \Pi(\mu_0, \mu_1)$, we have
\begin{equation} \label{eqn:BW.Euclidean.computation}
\begin{split}
&\int_{\M \times \M} {\bf B}(p, q) \dd \pi(p, q) \\
&= \frac{1}{2} \int_{\M \times \M} |x_p - y_q|^2 \dd \pi(p, q) + \int_{\X} \widetilde{\Omega} \dd \mu_0^{\mathcal{X}} + \int_{\Y} \widetilde{\Omega^*} \dd \mu_1^{\mathcal{Y}} \\
&= \frac{1}{2} \int_{\R^d \times \R^d} |x - y'|^2 \dd \pi^{\X, \Y}(x, y') + \int_{\X} \widetilde{\Omega} \dd \mu_0^{\mathcal{X}} + \int_{\Y} \widetilde{\Omega^*} \dd \mu_1^{\mathcal{Y}},
\end{split}
\end{equation}
where $\pi^{\X, \Y} := (\iota, \iota^*)_{\#} \pi \in \Pi(\mu_0^{\X}, \mu_1^{\Y})$ couples $\mu_0$ and $\mu_1$ in {\it mixed coordinates}. As observed in \cite{carlier2007monge} and \cite{GHY17}, assuming that the integrals $\int_{\X} \widetilde{\Omega}(x) \dd \mu_0^{\mathcal{X}}(x)$ and $ \int_{\Y} \widetilde{\Omega^*}(y) \dd \mu_1^{\mathcal{Y}}(y)$ exist, the OT problem \eqref{eqn:Bregman.Wasserstein.divergence} is equivalent to the $2$-Wasserstein transport (with the quadratic cost function $|x - y'|^2$) between $\mu_0^{\X}$ and $\mu_1^{\Y}$ on $\R^d$, and hence can be solved in terms of Brenier's theorem \cite{B91}. We make the above ideas precise in the following proposition.

\begin{proposition}[Characterization of optimal transport map] \label{prop:solving.BW.transport}
Let $\mu_0, \mu_1 \in \mathcal{P}(\M)$. Assume:
\begin{itemize}
    \item[(i)] $\mu_0^{\X}$ is absolutely continuous with respect to the Lebesgue measure on $\X$ (or equivalently $\mu_0$ is absolutely continuous with respect to the Riemannian volume measure on $\M$);
    \item[(ii)] $\Omega \in L^1(\mu_0^{\X})$ and $\Omega^* \in L^1(\mu_1^{\Y})$.
\end{itemize}
Let $D f$ be the ($\mu_0^{\X}$-a.e.) unique convex gradient (Brenier map) which is $\mathscr{W}_2$ optimal for the pair $(\mu_0^{\X}, \mu_1^{\Y})$, that is
\begin{equation} \label{eqn:BW.Brenier}
\int_{\X} |x - Df(x)|^2 \dd \mu_0^{\X}(x) = \mathscr{W}_2^2(\mu_0^{\X}, \mu_1^{\Y}).
\end{equation}
Note that $D f(x) \in \Y$ for $\mu_0^{\X}$-almost all $x \in \X$. Then the optimal transport problem \eqref{eqn:Bregman.Wasserstein.divergence} is solved by the deterministic coupling $\pi = (\mathrm{Id}, T)_{\#} \mu_0$ induced by the transport map $T = (\iota^*)^{-1} \circ Df \circ \iota : \M \rightarrow \M$. In particular, we have
\begin{equation} \label{eqn:BW.value}
\begin{split}
\mathscr{B}(\mu_0, \mu_1) &=  \int_{\M} {\bf B}(p, T(p)) \dd \mu_0(p) \\
  &= \frac{1}{2} \mathscr{W}_2^2(\mu_0^{\X}, \mu_1^{\Y}) + \int_{\X} \widetilde{\Omega} \dd \mu_0^{\X} + \int_{\Y} \widetilde{\Omega^*} \dd \mu_1^{\Y},
  \end{split}
\end{equation}
\end{proposition}
\begin{proof}
Consider \eqref{eqn:BW.Euclidean.computation} which is valid under condition (ii). Taking infimum over $\pi \in \Pi(\mu, \nu)$, we have
\begin{equation} \label{eqn:BW.Euclidean.computation2}
\begin{split}
\mathscr{B}(\mu_0, \mu_1) &= \inf_{\pi^{\X, \Y} \in \Pi(\mu_0^{\X}, \mu_1^{\Y})} \frac{1}{2} \int_{\R^d \times \R^d} |x - y'|^2 \dd \pi^{\X, \Y}(x, y') + C(\mu_0,\mu_1),%
\end{split}
\end{equation}
where the term $C(\mu_0,\mu_1):= \int_{\X} \widetilde{\Omega} \dd \mu_0^{\X} + \int_{\Y} \widetilde{\Omega^*} \dd \mu_1^{\Y}$ is constant for given $\mu_0$ and $\mu_1$. By a version of Brenier's theorem (see e.g., \cite[Theorem 1.1]{mccann2011five}), the first term is minimized by $\pi^{\X, \Y} = (\mathrm{Id} \times Df)_{\#} \mu_0^{\X}$, where $D f$ is the $\mu_0^{\X}$-a.e.~unique convex gradient which pushforwards $\mu_0^{\X}$ to $\mu_1^{\Y}$. We obtain \eqref{eqn:BW.value} by plugging this coupling into \eqref{eqn:BW.Euclidean.computation2}.
\end{proof}

Proposition \ref{prop:solving.BW.transport} is a direct consequence of Brenier's theorem but has significant implications in applications; it states that computation of the Bregman-Wasserstein divergence between $\mu_0$ and $\mu_1$ is equivalent to that of the Euclidean $2$-Wasserstein distance $\mathscr{W}_2(\mu_0^{\X}, \mu_1^{\Y})$ between $\mu_0^{\X}$ and $\mu_1^{\Y}$, up to transformations via the mirror map and its inverse. For example, the  $\mathscr{B}_{\Omega}(\mu_0^{\X}, \mu_1^{\X})$-optimal transport map which pushforwards $\mu_0^{\X}$ to $\mu_1^{\X}$ under the primal reprsentation is
\begin{equation} \label{eqn:primal.OT.map}
T^{\X} =  D\Omega^* \circ Df : \X \rightarrow \X,
\end{equation}
where $Df$ is the Brenier map given by \eqref{eqn:BW.Brenier}. In particular, \eqref{eqn:BW.value} is the distributional analogue of \eqref{eqn:Bregman.as.Euclidean.shifted2}. Similarly, under the dual representation we have (assuming $\mu_1^{\Y}$ is absolutely continuous)
\begin{equation}
\mathscr{B}(\mu_0, \mu_1) = \mathscr{B}_{\Omega^*}(\mu_1^{\Y}, \mu_0^{\Y}) = \int_{\Y} {\bf B}_{\Omega^*}(y, T^{\Y}(y)) \dd \mu_1^{\Y}(y),
\end{equation}
where
\begin{equation} \label{eqn:dual.OT.map}
T^{\Y} =  D\Omega \circ Dh : \Y \rightarrow \Y
\end{equation}
and $Dh = (Df)^{-1}$ is the Brenier map which pushforwards $\mu_1^{\Y}$ to $\mu_0^{\X}$. In Section \ref{sec:neural.OT}, we estimate these transport maps using state-of-the-art OT solvers based on neural networks.

\begin{example}[Univariate case] \label{def:univariate.OT.map}
Suppose $d = 1$, so that both $\X$ and $\Y$ are open intervals of $\R$. Then the mirror map $D\Omega: \X \rightarrow \Y$ is an increasing diffeomorphism. The transport map $T^{\X}$ in \eqref{eqn:primal.OT.map} is non-decreasing since both $Df$ and $D\Omega^*$ are non-decreasing. In particular, the optimal coupling between $\mu_0^{\X}$ and $\mu_1^{\X}$ is the so-called {\it comonotonic coupling} \cite[Chapter 2]{S15}. In this case, the optimal coupling and transport map (which is $\mu_0^{\X}$-unique when $\mu_0^{\X}$ is absolutely continuous) does not depend on the choice of the Bregman generator $\Omega$ on $\X$, but the value of $\mathscr{B}_{\Omega}(\mu_0^{\X}, \mu_1^{\X})$ still does.
\end{example}

\subsubsection{Comparison with a Riemannian $2$-Wasserstein distance}
On $\M$, the Bregman divergence induces a Hessian Riemannian metric $g$ defined by \eqref{eqn:Bregman.metric}. That is, $(\M, g)$ is a Riemannian manifold. Let $\mathsf{d}_g(p, q)$ be the induced Riemannian distance and consider the corresponding {\it Riemannian $2$-Wasserstein distance} $\mathscr{W}_{2, g}$, defined by
\begin{equation} \label{eqn:Riemannian.2.Wasserstein}
\mathscr{W}_{2, g}^2(\mu_0, \mu_1) := \inf_{\pi \in \Pi(\mu_0, \mu_1)} \int_{\M \times \M} \mathsf{d}_g^2(p, q) \dd \pi(p, q).
\end{equation}
Since the Riemannian distance $\mathsf{d}_g(p, q)$ does not admit closed-form expressions except in simple cases, the Riemannian $2$-Wasserstein distance is generally not easy to implement. On the other hand, recall from \eqref{eqn:M.metric} that ${\bf B}$ and $\frac{1}{2} \mathsf{d}_g^2$ are equal up to second order along the diagonal. This suggests that $\mathscr{B}(\mu_0, \mu_1) \approx \frac{1}{2}\mathscr{W}_{2, g}^2(\mu_0, \mu_1)$ when $\mu_0 \approx \mu_1$; see Proposition \ref{prop:BW.metric} for a precise statement. Since the Bregman-Wasserstein divergence is straightforward to compute (in the sense that the computational cost is similar to that of a Euclidean quadratic transport) once $D\Omega$ and $D\Omega^*$ are known, it provides a tractable local approximation of $\mathscr{W}_{2, g}$. In Section \ref{sec:Wasserstein.gradient.flow}, we use this idea to propose the {\it Bregman-Wasserstein JKO scheme} to discretize Riemannian Wasserstein gradient flows.

\subsection{Probabilistic interpretation in terms of exponential family} \label{sec:exp.family}

It is well known that Bregman divergences are closely related to exponential families of probability distributions and the KL-divergence \cite{AN00, BMDG05}. Using this relation, we give a direct probabilistic interpretation of the Bregman-Wasserstein divergence when the underlying Bregman divergence originates from an exponential family.

Let $\mathbb{P}_0$ be a probability measure on $\mathbb{R}^d$ and let $\Omega: \mathbb{R}^d \rightarrow \mathbb{R} \cup \{+\infty\}$ be its cumulant generating function or equivalently the logarithm of the Laplace transform:
\[
\Omega(\theta) = \log \int_{\mathbb{R}^d} e^{\theta \cdot y} \dd \mathbb{P}_0(y).
\]
Let $\X = \mathrm{dom}(\Omega) = \{\theta \in \mathbb{R}^d : \Omega(\theta) < \infty\}$ which can be shown to be convex. Assuming $\X$ is open, $\Omega$ is a regular Bregman generator on $\X$; in fact $(\X, \Omega)$ is of Legendre type \cite[Theorem 9.3]{B14}. Here we use $\theta$,  rather than $x$, to denote the primal variable since $\theta$ is interpreted as the natural parameter of the exponential family. For $\theta \in \X$, define $\mathbb{P}_{\theta} \in \mathcal{P}(\mathbb{R}^d)$ by
\begin{equation} \label{eqn:exp.family}
p_{\theta}(y) = \frac{\dd \mathbb{P}_{\theta}}{\dd \mathbb{P}_0}(y) = e^{\theta \cdot y - \Omega(\theta)}.
\end{equation}
Then $\M = \{ \mathbb{P}_{\theta} : \theta \in \X \} \subset \mathcal{P}(\mathbb{R}^d)$ is the $d$-dimensional {\it exponential family} induced by $\mathbb{P}_0$. The resulting Bregman manifold has a rich theory and its numerous statistical applications have been well studied \cite{A16, AN00}. We mention a few required properties. First, we have
\begin{equation} \label{eqn:Bregman.as.KL}
{\bf B}( \mathbb{P}_{\theta}, \mathbb{P}_{\theta'}) = {\bf B}_{\Omega}(\theta, \theta') = {\bf H}(\mathbb{P}_{\theta'} || \mathbb{P}_{\theta} ),
\end{equation}
which is the KL-divergence (see e.g.~\cite[(2.16)]{A16}), and the induced Riemannian metric $g$ is the Fisher-Rao metric on $\M$. Thus, analogous to Example \ref{eg:simplex.cgf}, we may interpret the Bregman-Wasserstein divergence as a transport KL-divergence. Moreover, the dual variable $\eta = D\Omega(\theta)$ is the {\it expectation parameter} given by $\eta = \int_{\mathbb{R}^d} y \dd \mathbb{P}_{\theta}(y)$, and the dual function $\Omega^*$ is the negative Shannon entropy:
\begin{equation} \label{eqn:exp.family.dual.function}
\Omega^*(\eta) = - \int_{\mathbb{R}^d} p_{\theta} \log p_{\theta} \dd \mathbb{P}_0, \quad \eta = D\Omega(\theta) \in \Y.
\end{equation}

Now further suppose that $\mathbb{P}_0$ is either discrete or is absolutely continuous with respect to the Lebesgue measure. Then, by  \cite[Theorem 4]{BMDG05}, the density can be expressed, on the common support $\mathcal{S} \subset \mathbb{R}^d$ of elements $\mathbb{P}_{\theta}$ of $\M$ and in terms of the Bregman divergence ${\bf B}_{\Omega^*}$, by
\begin{equation} \label{eqn:exp.family.Bregman}
p_{\theta}(y) = e^{-{\bf B}_{\Omega^*}(y, \eta) + \Omega^*(y)}, \quad y \in \mathcal{S}.
\end{equation}
The main technicality in this result is to show that $\mathcal{S} \subset \mathrm{dom}(\Omega^*)$, where $\Omega^*$ is the convex conjugate of $\Omega$, so that the Bregman divergence ${\bf B}_{\Omega^*}(y, \eta)$ is defined. For the purpose of this discussion, we assume $\mathcal{S} \subset \Y = D\Omega(\X)$.

We are ready to discuss the probabilistic interpretation of the Bregman-Wasserstein divergence. Suppose $\mu_0 \in \mathcal{P}(\M)$ is a discrete distribution of the form $\mu_0^{\Y} = \frac{1}{N} \sum_{i = 1}^N \delta_{\eta_i}$. Now, suppose that for each parameter $\theta_i = D\Omega^*(\eta_i) \in \X$, we sample one value from the distribution $\mathbb{P}_{\theta_i}$. Equivalently, we may consider a system of $N$ independent particles where each particle $Z^i_t$ is a Markov chain $(Z_0, Z_1)$ with transition kernel $Q(y, \cdot) = \mathbb{P}_{D \Omega^*(y)}(\cdot)$. Let $\mu_1^{\Y} = \frac{1}{N} \sum_{j = 1}^N \delta_{y_j}$ be the observed empirical distribution of the particles $\{Z^1_1, \ldots, Z^N_1\}$ at time $1$. Note that we only observe the initial and final empirical distributions of the particles, but do {\it not} know the identities of the particles.

Consider the problem of finding an optimal matching $i \mapsto j  = \sigma(i)$, between the initial and final positions, so as to maximize the log likelihood $\mathrm{loglik}(\sigma)$. This is similar in spirit to the {\it Schr\"{o}dinger bridge problem} \cite{leonard2012schrodinger}, but in our setting there is no additional noise parameter and we maximize directly the likelihood rather than minimizing a relative entropy. For a given permutation $\sigma$, we have%
\begin{equation*}%
\begin{split}
\mathrm{loglik}(\sigma) &:= \sum_{i = 1}^N \log p_{\theta_i}(y_{\sigma(i)}) = \sum_{i = 1}^N \left(-{\bf B}_{\Omega^*} (y_{\sigma(i)}, \eta_i) + \Omega^*(y_{\sigma(i)}) \right).
\end{split}
\end{equation*}
Rearranging, we get
\begin{equation} \label{eqn:exp.family.BW}
\int_{\Y \times \Y} {\bf B}_{\Omega^*}(y, \eta) \dd \pi_{\sigma}^{\Y}(y, \eta) = -\frac{1}{N} \mathrm{loglik}(\sigma) + \int_{\Y} \Omega^* \dd \mu_1^{\Y},
\end{equation}
where $\pi_{\sigma}^{\Y}$ is the coupling on $\Y \times \Y$ induced by $\sigma$.
Thus, the Bregman-Wasserstein divergence $\mathscr{B}(\mu_0, \mu_1)$ is, up to additive and multiplicative constants, the negative (maximized) log likelihood of the optimal matching. One may easily extend this interpretation to arbitrary $\mu_0, \mu_1 \in \mathcal{P}(\M)$. We believe that the relation with exponential family will yield new applications of the Bregman-Wasserstein divergence.

\begin{example} \label{eg:optimal.matching}
Let $\mathbb{P}_0$ be the uniform distribution on the open hypercube $\Y = (-1, 1)^d$, and consider the corresponding exponential family $\M = \{\mathbb{P}_{\theta}\}_{\theta \in \mathbb{R}^d}$. The corresponding Bregman generator is
\[
\Omega(\theta) = \sum_{i = 1}^d \log \frac{\sinh \theta^i}{\theta^i}, \quad \theta \in \X = \mathbb{R}^d,
\]
and the mirror map $D\Omega: \X \rightarrow \Y$ is given by
\[
\eta = D\Omega(\theta) = \left( \coth(\theta^i) - \frac{1}{\theta^i} \right)_{1 \leq i \leq d}.
\]
The inverse mirror map, which acts component-wise, is not explicit but can be implemented efficiently. In Figure \ref{fig:optimal_matching} we illustrate the solution of the optimal matching problem \eqref{eqn:exp.family.BW}, where $d = 2$ and the empirical distributions $\mu_0^{\Y}, \mu_1^{\Y}$ are simulated.\footnote{The optimal matching was computed in \texttt{R} using the \texttt{transport} package \cite{Rtransport}.}

\begin{figure}[t!]
	\centering
	\hspace{-0.55cm}
	\includegraphics[scale = 0.5]{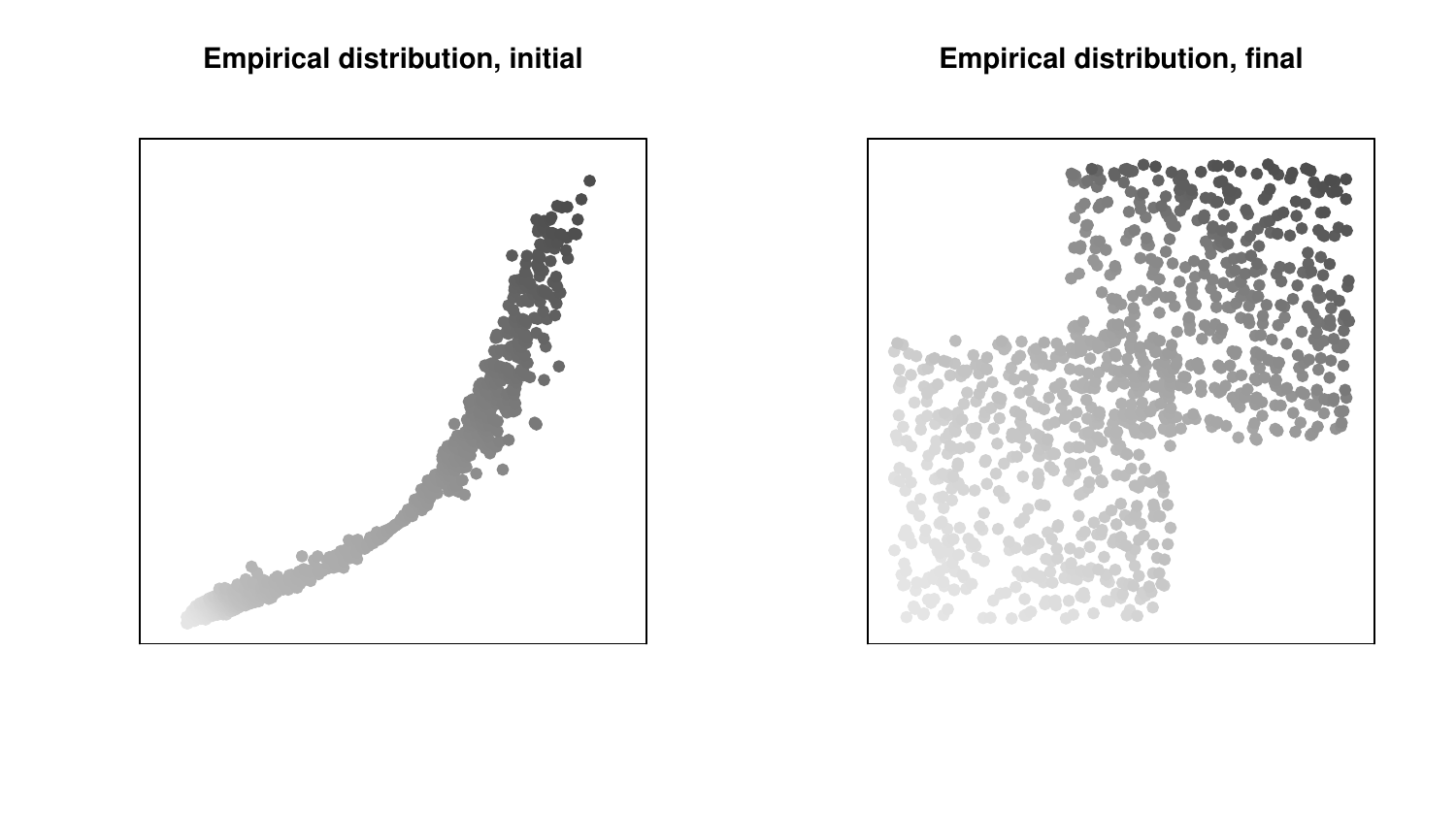}
    \vspace{-1.25cm}
	\caption{\footnotesize Optimal matching in the context of Example \ref{eg:optimal.matching} where $\Y = (-1, 1)^d$. Left: Empirical distribution $\mu_0^{\Y}$ at time $0$. Right: Empirical distribution $\mu_1^{\Y}$ at time $1$. Here $d = 2$ and each distribution has $N = 1000$ data points. The optimal matching is indicated by the coloring of the points.} \label{fig:optimal_matching}
\end{figure}
\end{example}

\begin{example} \label{eg:BW.exp.family}
In the context of this subsection, for any $\mu \in \mathcal{P}(M)$, define $\mathbb{Q}_{\mu} \in \mathcal{P}(\mathcal{S})$ (recall that $\mathcal{S}$ is the support of the exponential family) by
\[
\mathbb{Q}_{\mu}(\cdot) = \int_{\X} \mathbb{P}_{\theta}(\cdot) \dd \mu^{\X}(\theta).
\]
Thus, each $\mathbb{Q}_{\mu}$ is an {\it exponential mixture distribution} \cite{lindsay1983geometry}. Given two such mixture distributions, say $\mathbb{Q}_{\mu_0}$ and $\mathbb{Q}_{\mu_1}$, we may define a divergence between them (or, rather, between the weight distributions $\mu_0$ and $\mu_1$) as the Bregman-Wasserstein divergence between the mixture weights $\mu_0$ and $\mu_1$:
\begin{equation}\label{eqn:BW.mixture.exp.family}
\mathscr{D}(\mathbb{Q}_{\mu_0}, \mathbb{Q}_{\mu_1}) := \mathscr{B}(\mu_0, \mu_1) = \inf_{\pi^{\X} \in \Pi(\mu_0^{\X}, \mu_1^{\X})} \int_{\X \times \X} {\bf H}(\mathbb{P}_{\theta'}|| \mathbb{P}_{\theta}) \dd \pi(\theta, \theta'),
\end{equation}
where the last equality follows from \eqref{eqn:Bregman.as.KL}.
Note that this is different from the KL-divergence ${\bf H}(\mathbb{Q}_{\mu_1}||\mathbb{Q}_{\mu_0})$ since $\mathscr{D}(\mathbb{Q}_{\mu_0}, \mathbb{Q}_{\mu_1})$ takes into account the difference of the mixture weights. In Example \ref{eg:GM.barycenters} we apply this divergence in a Bayesian setting.
\end{example}

\section{Primal and dual displacement interpolations} \label{sec:interpolations}
In this section, we define the primal and dual displacement interpolations between a pair of probability distributions and study their properties.

\subsection{Motivation and definition}
When considering the $2$-Wasserstein distance, a key role is played by McCann's {\it displacement interpolation} \cite{M97} which is defined as follows. Let $\mu_0, \mu_1 \in \mathcal{P}_{2}(\mathbb{R}^d)$ (probability measures on $\R^d$ with finite second moment) be, say, absolutely continuous, and let $Df$ be the convex gradient satisfying $(Df)_{\#}\mu_0 = \mu_1$. Then McCann's displacement interpolation is defined by
\begin{equation} \label{eqn:McCann.interpolation}
\mu_t = ((1 - t) \mathrm{Id} + t Df)_{\#} \mu_0, \quad 0 \leq t \leq 1.
\end{equation}
It is well known that displacement interpolations are geodesics in the {\it Wasserstein space} $(\mathcal{P}_2(\mathbb{R}^d), \mathscr{W}_2)$, that is $\mathscr{W}_2(\mu_s, \mu_t) = |s - t| \mathscr{W}_2(\mu_0, \mu_1)$, for any $s, t \in [0, 1]$. Under the displacement interpolation \eqref{eqn:McCann.interpolation}, each ``particle'' travels along a Euclidean geodesic (i.e.~constant-velocity straight line) on $\mathbb{R}^d$. See \cite{AGS08, Gigli12} for the corresponding theory on a Riemannian manifold.

When we consider a Bregman manifold $\M$ it is the Bregman geometry, not the Riemannian, that proves most useful and tractable in applications. Thus, we wish to define generalized displacement interpolations on $\mathcal{P}(\M)$ which are compatible with the primal and dual geodesics. Here we state the definition of primal and dual displacement interpolations, a graphical illustration of which is given in Figure \ref{fig:interpolations}.

 \begin{definition}[Primal and dual displacement interpolations] \label{def:primal.dual.displacement.interpolation}
 Consider a curve $(\mu_t)_{0 \leq t \leq 1} \subset \cP(\M)$.
 \begin{enumerate}
     \item[(i)] We call $(\mu_t)$ a primal displacement interpolation if there exists a convex function $h$ on $\Y$ such that $Dh : \Y \rightarrow \X$ and
     \begin{equation}  \label{eqn:primal.generalized.geodesic}
     \mu_t^{\X} = ((1 - t) D \Omega^* + t D h)_{\#}  \mu_0^{\Y}.
     \end{equation}
     \item[(ii)] We call $(\mu_t)$ a dual displacement interpolation if there exists a convex function $f$ on $\X$ such that $D f : \X \rightarrow \Y$ and
     \begin{equation} \label{eqn:dual.generalized.geodesic}
     \mu_t^{\Y} =  ((1 - t) D \Omega + t Df))_{\#} \mu_0^{\X}.
     \end{equation}
 \end{enumerate}
 \end{definition}

 Definition \ref{def:primal.dual.displacement.interpolation} is closely related to the {\it generalized geodesic} introduced in \cite[Definition 9.2.2]{AGS08} (also see \cite[Definition 7.31]{S15}). A justification using the theory of {\it $c$-convexity} is given in Section \ref{sec:c.convex}. In Section \ref{sec:parallel.transport} we will see that these primal and dual displacement interpolations turn out to be parallel transports with respect to the primal and dual connections on the space of probability measures.

	\begin{figure}[t!]
    \scalebox{0.75}{
\begin{tikzpicture}
    \tikzset{
        dist/.style={ellipse, draw, minimum width=1.5cm, minimum height=0.8cm, thick, fill opacity=0.5},
        irreg/.style={
            ellipse,
            draw,
            minimum width=1.5cm,
            minimum height=0.8cm,
            thick,
            fill opacity=0.5,
        },
        interp/.style={ellipse, draw, dashed, minimum width=1.5cm, minimum height=0.8cm, thin, fill opacity=0.5},
        irreg_curve_interp/.style={
            ellipse,
            draw,
            dashed,
            minimum width=1.5cm,
            minimum height=0.8cm,
            thin,
            fill opacity=0.5,
        }
    }

    \node[dist, fill=blue!30, rotate=20] at (-1.5,-3.5) (A1) {};
    \node[dist, fill=red!30, rotate=-15] at (-2.5,0.5) (A2) {};

    \node at (-1.4,0.5) {$\textcolor{red!90}{\mu_0^{\mathcal{X}}}$};
    \node at (-3.075,-1.5) {$\textcolor{blue!50!red!50}{\mu_t^{\mathcal{X}}}$};
    \node at (-2.5,-3.5) {$\textcolor{blue!90}{\mu_1^{\mathcal{X}}}$};
    \draw[gray] plot[mark=*,dashed] coordinates {(-1.5,-3.5) (-2,-1.5) (-2.5,0.5)};
    \node at (-2.75,0.65) {\footnotesize$x_0$};
    \node at (-2.3,-1.5) {\footnotesize$x_t$};
    \node at (-1.75,-3.65) {\footnotesize$x_1$};

    \foreach \i in {1,...,3} {
        \pgfmathsetmacro{\t}{\i/4}
        \pgfmathsetmacro{\x}{(-1.5)*(1-\t) + (-2.5)*\t}
        \pgfmathsetmacro{\y}{(-3.5)*(1-\t) + (0.5)*\t}
        \pgfmathsetmacro{\rot}{20*(1-\t) + (-15)*\t}
        \pgfmathsetmacro{\blue}{100*(1-\t)} 
        \pgfmathsetmacro{\red}{100*\t} 
        \node[interp, fill=blue!\blue!red!\red, rotate=\rot] at (\x,\y) {};
    }

    \draw[thick, gray!70, dashed] (-2,-1.5) ellipse (2.25cm and 3.5cm);

    \node[above] at (-2,1.25) {$\mathcal{X}$};

    \node[irreg, fill=blue!50, rotate=25] at (3.0,-3.75) (B1) {};
    \node[irreg, fill=red!50, rotate=-20] at (2.5,0.5) (B2) {};

    \node at (3.6,0.5) {$\textcolor{red!90}{\mu_0^{\mathcal{Y}}}$};
    \node at (3.45,-1.75) {$\textcolor{blue!50!red!50}{\mu_t^{\mathcal{Y}}}$};
    \node at (2.5,-3.2) {$\textcolor{blue!90}{\mu_1^{\mathcal{Y}}}$};
    \draw[gray] plot[mark=*,smooth,tension=1] coordinates {(3.0,-3.75) (4.5,-1.75) (2.5,0.5)};
    \node at (2.25,0.65) {\footnotesize$y_0$};
    \node at (4.25,-1.85) {\footnotesize$y_t$};
    \node at (2.75,-3.85) {\footnotesize$y_1$};

    \foreach \i in {1,...,3} {
        \pgfmathsetmacro{\t}{\i/4}
        \pgfmathsetmacro{\x}{(3.5)*(1-\t)*(1-\t) + (6)*2*\t*(1-\t) + (2.5)*\t*\t}
        \pgfmathsetmacro{\y}{(-3.5)*(1-\t)*(1-\t) + (-2)*2*\t*(1-\t) + (0.5)*\t*\t}
        \pgfmathsetmacro{\rot}{25*(1-\t) + (-20)*\t}
        \pgfmathsetmacro{\blue}{100*(1-\t)} 
        \pgfmathsetmacro{\red}{100*\t} 
        \node[irreg_curve_interp, fill=blue!\blue!red!\red, rotate=\rot] at (\x,\y) {};
    }

    \draw[thick, gray!70, dashed] (3.4,-1.5) ellipse (2.25cm and 3.5cm);

    \node[above] at (3.25,1.25) {$\mathcal{Y}$};

    \draw[->, dashed, ultra thick, -Stealth]
        (-1.25,1.2) to[out=45, in=135] node[midway, below, sloped] {$D\Omega$} (2.0,1.2);

    \draw[->, dashed, ultra thick, -Stealth]
        (2.0,-4.5) to[out=-135, in=-45] node[midway, above, sloped] {$D\Omega^*$} (-1.25,-4.5);

    \draw[->, thick, -Stealth]
        (1.9,0.3) to node[midway, below, sloped] {$Dh$} (-0.75,-3.2);

    \draw[->, thick, -Stealth]
        (1.9,0.3) to node[midway, above, sloped] {$(1-t)D\Omega^* + t Dh$} (-1.25,-1.25);
  \end{tikzpicture}
}
        \caption{\footnotesize Illustration of primal displacement interpolation. We first find a convex gradient $Dh$ which pushforwards $\mu_0^{\Y}$ to $\mu_1^{\X}$. Under the primal representation, the primal displacement interpolation $\mu_t^{\X}$ is defined by the pushforward of $\mu_0^{\Y}$ under the convex combination $Dh_t := (1 - t) D\Omega^* + t Dh$. This guarantees that each particle moves along a primal geodesic $(x_t)$.} \label{fig:interpolations}
	\end{figure}

Letting $t = 1$ in \eqref{eqn:dual.generalized.geodesic}, we have that $Df$ is a convex gradient with $(Df)_{\#} \mu_0^{\X} = \mu_1^{\Y}$. Thus, $Df$ is the Brenier map which appears in the optimal transport map $T^{\X}$ (with respect to $\mathscr{B}_{\Omega}(\mu_0^{\X}, \mu_1^{\X})$) given by \eqref{eqn:primal.OT.map}. Analogously, $Dh$ in \eqref{eqn:primal.generalized.geodesic} is the Brenier map in the optimal transport map $T^{\Y}$ (with respect to $\mathscr{B}_{\Omega^*}(\mu_0^{\Y}, \mu_1^{\Y})$) as in \eqref{eqn:dual.OT.map}, after swapping $\mu_0$ and $\mu_1$.

\begin{example}
Let $(\gamma_t)_{0 \leq t \leq 1}$ be a curve in $\M$ and, for each $t$, let $\mu_t = \delta_{\gamma_t}$ be the point mass at $\gamma_t$. Then $(\mu_t)$ is a primal (resp.~dual) displacement interpolation if and only if the curve $(\gamma_t)$ is a primal (resp.~dual) geodesic. More generally, each ``particle'' in the primal (resp.~dual) displacement interpolation travels along a primal (resp.~dual) geodesic.
\end{example}

\begin{figure}[t!]
	\centering
	\includegraphics[scale = 0.65]{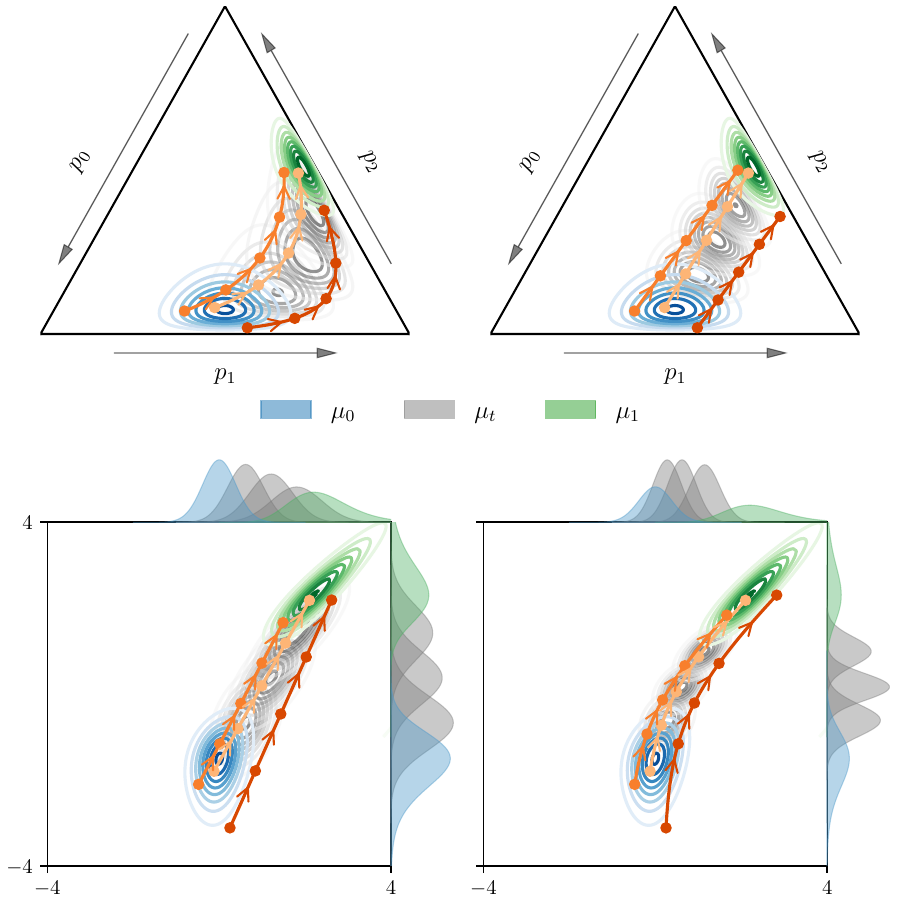}

	\caption{\footnotesize Illustration of primal and dual displacement interpolations in the context of Example \ref{eg:generalized.geodesics}. The source distribution $\mu_0$ is shown in blue and the target $\mu_1$ is shown in green. The first row shows the dual domain (simplex) and the second row shows the primal domain ($\mathbb{R}^2$). The first column illustrates the primal displacement interpolation, and the second column the dual displacement interpolation. We also plot the primal and dual geodesics traced by three particles (shades of orange).}
	\label{fig:geodesics}
\end{figure}

\begin{example} \label{eg:generalized.geodesics}
In Figure \ref{fig:geodesics} we illustrate graphically the primal and dual displacement interpolations using the Bregman generator in Example \ref{eg:simplex.cgf}, where $d = 2$. The primal domain $\X$ is $\mathbb{R}^2$ and the dual domain $\Y$ can be identified with the open unit simplex. The Bregman divergence ${\bf B}$ is the (reverse) KL-divergence (see \eqref{eqn:simplex.KL}). Here $\mu_0$ and $\mu_1$ are simulated point clouds and all distributions are visualized using contours of density estimates. We visualize both the primal and dual displacement interpolations under both the primal and dual representations. Under the primal displacement interpolation (left column), each particle travels along a straight line in $\X$. These primal geodesics appear to be curved when plotted under the dual coordinate system. Using the notation in Example \ref{eg:simplex.cgf}, a primal geodesic (also called an $e$-geodesic or exponential interpolation in this context) has the form
\[
y^i_t = \frac{1}{Z_t} (y^i_0)^{1 - t} (y^i_1)^t, \quad 0 \leq i \leq d,
\]
where $Z_t$ is the normalizing constant such that $\sum_{i = 0}^d y^i_t = 1$. Similarly, under the dual displacement interpolation (right column) particles move along straight lines in $\Y$. Here, the optimal transports between point clouds are implemented in terms of optimal matching. In Section \ref{sec:neural.OT} we adopt another approach using neural networks.
\end{example}

Displacement interpolations were first introduced by McCann \cite{M94,M97} to study optimization on the space of probability measures via a new notion of convexity. With primal and dual displacement interpolations, we define two notions of displacement convexity compatible with the Bregman geometry.

\begin{definition}[Primal and dual displacement convexities]
A functional $\mathcal{F}: \mathcal{P}(\M) \rightarrow \mathbb{R} \cup \{+\infty\}$ is said to be primal displacement convex if $\mathcal{F}(\mu_t)$ is convex in $t$ for any primal displacement interpolation $(\mu_t)_{0 \leq t \leq 1}$. Similarly, $\mathcal{F}$ is said to be dual displacement convex if $\mathcal{F}(\mu_t)$ is convex in $t$ for any dual displacement interpolation $(\mu_t)_{0 \leq t \leq 1}$.
\end{definition}

\begin{example}[Conditional entropy]
Consider an exponential family $\M = \{ \mathbb{P}_{\theta}\}_{\theta \in \X}$ in the context of Section \ref{sec:exp.family}, where $\Omega = \Omega(\theta)$ is the cumulant generating function. Let $\mu \in \mathcal{P}(\M)$ and let $\tilde{\theta}$ be a random vector distributed as $\mu^{\X}$. Given $\tilde{\theta} = \theta$, let $Y \sim \mathbb{P}_{\theta}$. Consider the negative {\it conditional (differential) entropy} of $Y$ given $\tilde{\theta}$ defined by
\[
\mathcal{H}(\mu) = -{\bf H}(Y|\tilde{\theta}) :=  \int_{\X} \left( \int p_{\theta} \log p_{\theta} \dd \mathbb{P}_{\theta} \right) \dd \mu^{\X}(\theta).
\]
By \eqref{eqn:exp.family.dual.function}, we may write $\mathcal{H}(\mu) = \int_{\X} \Omega^*(\eta) \dd \mu^{\Y}(\eta)$, where $\eta = D\Omega(\theta)$ is the expectation parameter. Since $\Omega^*$ is convex on $\Y$, it is straightforward to extend the argument in \cite[Theoreme 5.15]{V03} to see that $\mathcal{H}$ is dual displacement convex.
\end{example}

\subsection{Generalized Pythagorean inequality}  \label{sec:Pyth}
The Bregman divergence ${\bf B}$ on $\M$ satisfies a generalized Pythagorean theorem (Theorem \ref{thm:Bregman.Pyth}). We show that it implies a generalized Pythagorean {\it inequality} for the Bregman-Wasserstein divergence $\mathscr{B}$ on $\mathcal{P}(\M)$. The statement requires a Riemannian metric between velocities of curves on $\mathcal{P}(\M)$. In Section \ref{sec:prelim-otto-lott} we will see that the special case \eqref{eqn:Otto.inner.product.special.case} used is consistent with {\it  Otto's metric} in the general theory (also see Lemma \ref{lem:displacement.interpolation.fields}).

\begin{theorem}[Generalized Pythagorean inequality] \label{thm:BW.Pyth}
Let $\rho \in \mathcal{P}(\M)$ and smooth convex gradients $Dh: \Y \rightarrow \X$ and $Df : \X \rightarrow \Y$, let $(\mu_t)_{0 \leq t \leq 1}$ be the primal displacement interpolation started at $\mu_0 = \rho$ and induced by $Dh$, and let $(\nu_t)_{0 \leq t \leq 1}$ be the dual displacement interpolation started at $\nu_0 = \rho$ and induced by $Df$, i.e.,
\[
\mu_t^{\X} = ((1 - t) D\Omega^* + tDh)_{\#} \mu_0^{\Y}, \quad \nu_t^{\Y} = ((1 - t) D \Omega + t Df)_{\#} \nu_0^{\X}.
\]
Assume that the pairs $(\rho, \mu_1)$ and $(\rho, \nu_1)$ satisfy the conditions in Proposition \ref{prop:solving.BW.transport}. For $0 \leq t < 1$, it holds that
\begin{equation} \label{eqn:BW.Pyth}
\mathscr{B}(\rho, \nu_t) + \mathscr{B}(\mu_t, \rho
) - \mathscr{B}(\mu_t, \nu_t) \geq t^2 \mathfrak{g}( \dot{\mu}_0 , \dot{\nu}_0),
\end{equation}
where
\begin{equation} \label{eqn:Otto.inner.product.special.case}
\mathfrak{g}( \dot{\mu}_0 , \dot{\nu}_0) := \int_{\M} g(\grad (f - \Omega) \circ \iota  ,\grad (h - \Omega^*)\circ \iota^*) \dd \rho(p)
\end{equation}
is Otto's Riemannian inner product between the primal and dual geodesics at time zero. Furthermore, when $\dim \M = 1$, then equality holds in~\eqref{eqn:BW.Pyth}.
\end{theorem}
\begin{proof}
For $0 \leq t < 1$, the convex function $f_t = (1 - t) \Omega + t f$ satisfies conditions (i) and (ii) of Definition \ref{def:Bregman.generator}, so $Df_t$ is a diffeomorphism from $X$ onto its image. Moreover, the mapping $S_t = (\iota^*)^{-1} \circ Df_t \circ \iota$ satisfies the conditions of Proposition \ref{prop:solving.BW.transport} and is the optimal transport map which attains $\mathscr{B}(\rho, \nu_t)$. %
Similarly, for $h_t = (1 - t) \Omega^* + t h$ the mapping $T_t = \iota^{-1} \circ Dh_t \circ \iota^*$ is invertible and is optimal for $\mathscr{B}(\mu_t, \rho)$. It follows that
\begin{equation*}
\mathscr{B}(\mu_t, \rho) = \int_{\M} {\bf B}(T_t(p), p) \dd \rho(p), \quad
\mathscr{B}(\rho, \nu_t) = \int_{\M} {\bf B}(p, S_t(p)) \dd \rho(p).
\end{equation*}
Since $T_t$ is invertible, $U_t = S_t \circ T_t^{-1}$ pushforwards $\mu_t$ to $\nu_t$ but is generally suboptimal as a
transport map, i.e.,
\begin{equation} \label{eqn:BW.triangle.proof}
\mathscr{B}(\mu_t, \nu_t) \leq \int_{\M} {\bf B}(p, U_t(p)) \dd \mu_t(p) = \int_{\M} {\bf B}(T_t(p), S_t(p)) \dd \rho(p),
\end{equation}
where the last inequality follows from the fact that $(\mathrm{Id} \times U_t)_{\#} \mu_t = (T_t \times S_t)_{\#} \rho$.

By Theorem \ref{thm:Bregman.Pyth}, we have
\begin{equation*}
\begin{split}
& \mathscr{B}(\rho, \nu_t) + \mathscr{B}(\mu_t, \rho) - \mathscr{B}(\mu_t, \nu_t) \\
&\geq \int_{\M} \left({\bf  B}(p, S_t(p)) + {\bf B}(T_t(p), p) - {\bf B}(T_t(p), S_t(p))\right) \dd \rho(p) \\
&= t^2 \int_{\M} g\left( \frac{\dd}{\dd t} \Big|_{t = 0} T_t(p), \frac{\dd}{\dd t} \Big|_{t = 0} S_t(p)\right) \dd \rho(p) \\
&= t^2 \int_{\M} g(\grad (f - \Omega) \circ \iota  ,\grad (h - \Omega^*)\circ \iota^*) \dd \rho(p) \\
&= t^2 \mathfrak{g}(\dot{\mu}_0, \dot{\nu}_0),
\end{split}
\end{equation*}
where the second last equality follows from \eqref{eqn:Bregman.Pyth.identity} and the definition of Riemannian gradient.

It remains to show that equality holds in $d = 1$. Indeed, when $d = 1$ (so that both $\X$ and $\Y$ are intervals of $\R$) we have
\[
U_t = S_t \circ T_t^{-1} = (\iota^*)^{-1} \circ (Df_t \circ Dh_t^{-1}) \circ \iota,
\]
where $Df_t \circ Dh_t^{-1} : \X \rightarrow \Y$ is increasing since both $Df$ and $Dh_t$ are increasing. From Example \ref{def:univariate.OT.map}, $U_t$ is an optimal transport map from $\mu_t$ to $\nu_t$. Hence equality holds in \eqref{eqn:BW.triangle.proof} and \eqref{eqn:BW.Pyth} is identically $0$.

\end{proof}

\begin{remark} \label{rmk:Pyth.curvature}
It is well known that the Wasserstein space $(\mathcal{P}_2(\mathbb{R}^d), \mathscr{W}_2)$ for $d \geq 2$, regarded as an infinite dimensional Riemannian manifold, has non-negative sectional curvature even though $\mathbb{R}^d$ (which is a self-dual Bregman manifold) is flat \cite{L08}. More generally, if $(\mathcal{M}, g)$ is a Riemannian manifold with Ricci curvature bounded below by $K$, then the (Riemannian) Wasserstein space $(\mathcal{P}_2(\mathcal{M}), \mathscr{W}_2)$ also has curvature bounded below by $K$ (in the sense of Alexandrov) \cite{V08}. As seen from the proof of Theorem \ref{thm:Bregman.Pyth}, in general (when $d \geq 2$) one cannot expect that equality holds in \eqref{eqn:BW.Pyth}.
\end{remark}

\subsection{Relation with $c$-convex geometry} \label{sec:c.convex}
We give an interpretation of Definition \ref{def:primal.dual.displacement.interpolation} using the {\it $c$-convex geometry} in optimal transport. For convenience, in this discussion we adopt the general notations of optimal transport as in \cite{V08}. Consider the Bregman cost
\[
c(x, y) = \Omega(x) - \Omega(y) - D\Omega(y) \cdot (x - y)
\]
on the primal domain $\X$ (here $x$ and $y$ are independent variables). Let $c_x$ be the (Euclidean) gradient of $c$ with respect to $x$. Then  for each $x$
\[
c_x(x, y) = D\Omega(x) - D\Omega(y)
\]
is injective in $y \in \X$ since $\Omega$ is a regular Bregman generator. In particular, $c$ satisfies the {\it twist condition} (which is not surprising since $c$ is equivalent to the quadratic cost upon a change of variables). Consider the {\it $c$-exponential map} $\cexp$ \cite[Definition 12.29]{V08} defined by the identity
\[
c_x(x, \cexp_x(p)) = -p.
\]
By a direct computation, we see that
\begin{equation} \label{eqn:cexp}
\cexp_x(p) = D\Omega^* ( D \Omega (x) + p),
\end{equation}
which is well-defined when $x \in \X$ and $D \Omega(x) + p \in \Y$. By definition, a {\it $c$-segment} with base $\overline{x}$ \cite[Definition 12.10]{V08} is a curve of the form
\[
y_t = \cexp_{\overline{x}}((1 - t)p_0 + t p_1), \quad 0 \leq t \leq 1.
\]
From \eqref{eqn:cexp}, $(y_t) \subset \Y$ is a $c$-segment (based at any $\overline{x}$) if and only if it is a dual geodesic. On the other hand, it is straightforward to see that a function $u(x)$ is $c$-convex if and only if $f = u + \Omega$ is a convex function of the form
\begin{equation} \label{eqn:Bregman.c.convex}
f(x) = u(x) + \Omega(x) = \sup_{z \in \Y} \left\{ x \cdot z - v(z) \right\}, \quad x \in X,
\end{equation}
for some function $v: \Y \rightarrow \mathbf{R}$. When $v$ is $C^1$ the $c$-gradient, when exists, is given by
\[
T(x) = D^c u(x) = \cexp_x(Du(x)) = D\Omega^*(Df(x)).
\]
So this $f$ corresponds to the Brenier potential $f$ in \eqref{eqn:BW.Brenier}. In particular, letting $v(z) = \Omega^*(z)$ (or $f = \Omega$) shows that $u(x) \equiv 0$ is $c$-convex and corresponds to the identity transport $T(x) \equiv x$. Suppose $u$ is a differentiable $c$-convex function such that $f$ is convex and $Df(\X) \subset \Y$. Then it can be shown using \eqref{eqn:Bregman.c.convex} that the convex combination $u_t = tu = (1 - t) \cdot 0 + t u$ is $c$-convex for $0 \leq t \leq 1$ and $f_t = (1 - t) \Omega + t f$. The $c$-gradient is given by
\begin{equation} \label{eqn:ut.cgradient}
T_t(x) = D \Omega^* ( (1 - t) D\Omega(x) + t Df(x)).
\end{equation}
Comparing \eqref{eqn:ut.cgradient} and \eqref{eqn:dual.generalized.geodesic}, we see that for a given $\mu_0^{\X} \in \mathcal{P}(\X)$, the pushforward $\mu_t^{\X} = (T_t)_{\#} \mu_0^{\X}$, $0 \leq t \leq 1$, is a dual displacement interpolation under the primal representation. Thus, the dual displacement interpolation corresponds to interpolation of $c$-convex potentials. Similarly, by considering the dual cost function $c^*(x, y) = {\bf B}_{\Omega}(y, x)$, we see that the primal displacement interpolation corresponds to interpolations of $c^*$-convex potentials.

\section{Bregman-Wasserstein geometry} \label{sec:gen-dual}
In this section, we use the Bregman-Wasserstein divergence to develop a generalized dualistic geometry on the space of probability measures. This section is primarily of theoretical interest, and the reader more interested in applications may proceed to Section \ref{sec:applications}.\footnote{Nevertheless, we believe that the results presented here will become more relevant to application with the advance of computational methods, such as {\it flow matching} \cite{haviv2024wasserstein,lipman2023fmg,lipman2024fmg} on families of probability distributions.} We first recall the {\it dualistic structure} of Bregman divergence in Section \ref{sec:Bregman.dualistic} and the Riemannian structure of the $2$-Wasserstein space in Section \ref{sec:prelim-otto-lott}. In Section \ref{sec:BW.Otto}, we construct a {\it generalized dualistic structure} on the space of probability measures by combing the Bregman and Wasserstein geometries. Further properties of this structure are studied in Section \ref{sec:parallel.transport}.

\subsection{Dualistic geometry of Bregman divergence} \label{sec:Bregman.dualistic}
In Section \ref{sec:Bregman.geometry} we showed that a Bregman divergence induces a Hessian Riemannian metric $g$. We also defined the primal and dual geodesics. Mathematically, these geodesics correspond to a pair of torsion-free affine connections (covariant derivatives), see \cite[Chapters 5--6]{A16}.

We define the {\it primal connection} $\nabla$ by the (torsion-free) Euclidean flat affine connection with respect to the primal coordinate system. This means the Christoffel symbols of $\nabla$ in the primal coordinates vanish and thus for any vector field $V$ we have $\nabla_V (u^i \frac{\partial}{\partial x^i}) = (Vu^i) \frac{\partial}{\partial x^i}$. Using the dual coordinate system, we define analogously the {\it dual connection $\nabla^*$}. Equivalently, these connections can be defined in terms of the Bregman divergence ${\bf B}$ via
\begin{equation} \label{eqn:dualistric.geometry.divergence}
\begin{split}
\langle \nabla_U V, W \rangle &= - U_p V_p W_{q} {\bf B}(p, q)|_{p = q}, \\
\langle \nabla_U^* V, W \rangle &= - U_{q} V_{q} W_{p} {\bf B}(p, q)|_{p = q}. \\
\end{split}
\end{equation}
The details of this construction (introduced in \cite{E92}) can be found in \cite[Sections 11.5--6]{CU14}.

That the geometric structure $(g, \nabla, \nabla^*)$ is termed {\it dualistic} stems from the following property first formalized in \cite{NA82}: $\nabla$ and $\nabla^*$ are  and are {\it conjugate} with respect to $g$ in the sense that
\begin{equation} \label{eqn:conjugate.connections}
U g(V, W) = g(\nabla_UV, W) + g(V, \nabla_U^*W),
\end{equation}
for vector fields $U, V, W$ on $\M$. By \eqref{eqn:conjugate.connections} the average $\overline{\nabla} = \frac{1}{2}(\nabla + \nabla^*)$ is the Levi-Civita connection induced by $g$. The triple $(g, \nabla, \nabla^*)$ satisfying \eqref{eqn:conjugate.connections} is called a {\it dualistic structure}. Since both $\nabla$ and $\nabla^*$ are flat (their curvature tensors vanish), we say that $(g, \nabla, \nabla^*)$ is {\it dually flat}. We also note that the difference $\nabla^* - \nabla$ can be identified with a symmetric $3$-tensor (called the {\it skewness tensor})
\begin{equation} \label{eqn:cubic.tensor}
T(U, V, W)  := g(\nabla_U^* V - \nabla_U V, W).
\end{equation}

\begin{proposition}[Third order expansion of Bregman divergence] \label{prop:divergence.expansion}
Let $\gamma_t$ be a dual geodesic on $M$ with $\dot{\gamma}_0 = v$. Then
\begin{equation*} %
{\bf B}(\gamma_0, \gamma_t) = \frac{t^2}{2}g(v, v) + \frac{t^3}{3!} T(v, v, v) + o(t^3),
\end{equation*}
where $T$ is the skewness tensor defined by \eqref{eqn:cubic.tensor}.

\end{proposition}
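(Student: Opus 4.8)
The plan is to collapse the intrinsic statement into a one–variable Taylor expansion carried out entirely in the dual coordinate chart. Since $\gamma$ is a dual geodesic, its dual coordinate is an affine line $y_{\gamma_t} = y_{\gamma_0} + t b$, and the velocity is $v = b^j \frac{\partial}{\partial y^j}\big|_{\gamma_0}$. Writing $y_0 = y_{\gamma_0}$ and using the self-dual representation \eqref{eqn:Bregman.self.dual}--\eqref{eqn:M.divergence} of the divergence, I would first rewrite
\[
{\bf B}(\gamma_0, \gamma_t) = {\bf B}_{\Omega^*}(y_{\gamma_t}, y_{\gamma_0}) = \Omega^*(y_0 + tb) - \Omega^*(y_0) - D\Omega^*(y_0)\cdot (tb).
\]
This reduces the problem to expanding the scalar function $F(t) := {\bf B}_{\Omega^*}(y_0 + tb, y_0)$, which is smooth because $\Omega^*$ is itself a (smooth) regular Bregman generator on $Y$.

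Next I would Taylor expand $F$ at $t=0$. The definition of the Bregman divergence gives $F(0) = 0$ and $F'(0) = 0$ for free, since these encode its nonnegativity and the vanishing of the first-order term. Differentiating twice and three times in $t$ yields $F''(0) = b^i b^j D_{ij}\Omega^*(y_0)$ and $F'''(0) = b^i b^j b^k D_{ijk}\Omega^*(y_0)$. By the dual-coordinate form of the Hessian metric \eqref{eqn:Bregman.metric}, the quadratic coefficient is exactly $\tfrac12 g(v,v)$, which simultaneously recovers the metric identity \eqref{eqn:M.metric}; thus the expansion reads ${\bf B}(\gamma_0,\gamma_t) = \tfrac{t^2}{2}g(v,v) + \tfrac{t^3}{3!}D^3\Omega^*(y_0)[b,b,b] + o(t^3)$, where the $o(t^3)$ error is controlled by the $C^3$-smoothness of $\Omega^*$.

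The one substantive step --- and the main obstacle --- is to identify the purely analytic cubic coefficient $D^3\Omega^*(y_0)[b,b,b]$ with the intrinsic contraction $\tfrac{1}{3!}T(v,v,v)$ of the skewness tensor \eqref{eqn:cubic.tensor}. I would do this by computing the components of $T$ in the dual chart. Since $\nabla^*$ is by definition the flat connection in dual coordinates, its coordinate covariant derivatives vanish, so \eqref{eqn:cubic.tensor} collapses to $T\big(\frac{\partial}{\partial y^i},\frac{\partial}{\partial y^j},\frac{\partial}{\partial y^k}\big) = -g\big(\nabla_{\partial/\partial y^i}\frac{\partial}{\partial y^j}, \frac{\partial}{\partial y^k}\big)$. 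The remaining quantity is the connection coefficient of the primal connection $\nabla$ in the dual chart; since $\nabla$ is flat in the primal chart, its dual-chart Christoffel symbols are $\frac{\partial y^k}{\partial x^l}\frac{\partial^2 x^l}{\partial y^i\partial y^j}$, and contracting with the metric (using $x = D\Omega^*(y)$ together with the inverse-Hessian relation \eqref{eqn:metric.inverse}) expresses this connection coefficient through $D_{ijk}\Omega^*$. Equivalently, the same identity follows by differentiating \eqref{eqn:metric.inverse} once in $y$, which converts third derivatives of $\Omega^*$ into the primal Christoffel data. Keeping careful track of the signs in this final matching is the only delicate point; everything else is routine bookkeeping. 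The analogue for a primal geodesic follows verbatim after interchanging the roles of $\Omega \leftrightarrow \Omega^*$ and $x \leftrightarrow y$.
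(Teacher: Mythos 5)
Your strategy --- reduce to the one-variable Taylor expansion of $F(t) := {\bf B}_{\Omega^*}(y_0+tb,\,y_0)$ in the dual chart and then match the cubic coefficient against the dual-frame components of $T$ --- is the natural one (the paper omits its own proof, noting only that the result is similar to a standard expansion in information geometry), and your reduction together with $F(0)=F'(0)=0$, $F''(0)=b^ib^jD_{ij}\Omega^*(y_0)=g(v,v)$ and $F'''(0)=b^ib^jb^kD_{ijk}\Omega^*(y_0)$ is correct. However, the one step you explicitly defer --- ``keeping careful track of the signs in this final matching'' --- is exactly where the argument fails to close, and no bookkeeping will rescue it. Carrying out the computation you sketch: $\nabla_{\partial/\partial y^i}\frac{\partial}{\partial y^j}=\frac{\partial^2 x^l}{\partial y^i\partial y^j}\frac{\partial}{\partial x^l}$, biorthogonality \eqref{eqn:biorthogonal}, and $x^l=D_l\Omega^*(y)$ give
\[
T\Bigl(\tfrac{\partial}{\partial y^i},\tfrac{\partial}{\partial y^j},\tfrac{\partial}{\partial y^k}\Bigr)=-g\Bigl(\nabla_{\partial/\partial y^i}\tfrac{\partial}{\partial y^j},\tfrac{\partial}{\partial y^k}\Bigr)=-\frac{\partial^2 x^k}{\partial y^i\partial y^j}=-D_{ijk}\Omega^*(y),
\]
so that $F'''(0)=-T(v,v,v)$ and your method in fact proves ${\bf B}(\gamma_0,\gamma_t)=\frac{t^2}{2}g(v,v)-\frac{t^3}{3!}T(v,v,v)+o(t^3)$, with the opposite sign on the cubic term from the statement. (Tensoriality confirms this: differentiating \eqref{eqn:metric.inverse} shows $D_{ijk}\Omega^*$ equals \emph{minus} the pullback of $D_{ijk}\Omega$ under $\partial x/\partial y$, while in the primal frame $T_{ijk}=+D_{ijk}\Omega$.)

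A concrete check: take $d=1$, $\Omega(x)=e^x$ on $X=\mathbb{R}$, so $Y=(0,\infty)$ and $\Omega^*(y)=y\log y-y$. With $y_0=1$, $b=1$, the dual geodesic is $y_{\gamma_t}=1+t$ and ${\bf B}(\gamma_0,\gamma_t)=(1+t)\log(1+t)-t=\frac{t^2}{2}-\frac{t^3}{6}+O(t^4)$, whereas $g(v,v)=D^2\Omega^*(1)=1$ and $T(v,v,v)=-D^3\Omega^*(1)=1$, so the right-hand side of the proposition as printed is $\frac{t^2}{2}+\frac{t^3}{6}+o(t^3)$. So the issue is not a missing idea in your argument but an unresolved sign that turns out to contradict the claim: the ``$+$'' version of the expansion is the one satisfied by ${\bf B}(\gamma_t,\gamma_0)$ along a \emph{primal} geodesic (where $T_{ijk}=D_{ijk}\Omega$ in the primal frame, with no sign flip), while the dual-geodesic expansion of ${\bf B}(\gamma_0,\gamma_t)$ carries $-\frac{t^3}{3!}T(v,v,v)$ with $T$ as defined in \eqref{eqn:cubic.tensor}. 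To finish, you must either prove the corrected statement (your computation already does this) or flag that the sign convention in \eqref{eqn:cubic.tensor} must be reversed for the displayed identity to hold; as a proof of the proposition exactly as written, the argument does not close.
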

\begin{proof}
This is similar to \cite[Proposition 4.1]{AJVS17}, so we omit the details.
\end{proof}

\subsection{Riemannian structure of the $2$-Wasserstein space} \label{sec:prelim-otto-lott}
Let $(\M, g)$ be a Riemannian manifold with Riemannian distance $\mathsf{d}_g$ and volume measure $\dd \vol$. Equip $\mathcal{P}_2(\M)$ (the space of Borel probability measures on $\M$ with finite second moment) with the $2$-Wasserstein metric $\mathscr{W}_2 = \mathscr{W}_{2, g}$ defined by \eqref{eqn:Riemannian.2.Wasserstein}. In his seminal paper \cite{O01} Otto introduced a formal Riemannian structure on $(\mathcal{P}_2(\M), \mathscr{W}_2)$ and used it to interpret some evolutionary partial differential equations as gradient flows in the Wasserstein space. Lott \cite{L08} further developed the Riemannian geometry of this space, notably computing the Levi-Civita connection, the associated geodesic equation, and the curvature tensor. Further works of Ambrosio, Gigli, and Savare \cite{AGS08} and also Gigli \cite{Gigli12} removed smoothness assumptions and developed other aspects of the geometry.

To focus on the geometric ideas, we follow the approach of Otto and Lott and restrict our attention to formal computations on the space of absolutely continuous probability measures with smooth densities. If $\mu \in \mathcal{P}(\M)$ is absolutely continuous with respect to $\dd \vol$, we identify $\mu$ with its density $\rho = \frac{\dd \mu}{\dd \vol}$. Formally, we define $\mathcal{P}^{\infty}(\M)$ as the space of probability measures $\dd \mu = \rho \dd \vol \in \mathcal{P}(\M)$ whose densities $\rho$ are positive and smooth, and such that (i) $\rho$ decays sufficiently fast such that all integrals involved (such as $\int_{\M} \mathsf{d}_g^2(p_0, p) \rho(p) \dd \vol(p)$) converge, and (ii) all required operations, such as differentiation under the integral sign and integration by parts, can be carried out. The space $\mathcal{P}^{\infty}(\M)$ represents our ``manifold of probability measures''

Motivated by \cite[Theorem 8.3.1]{AGS08}, we associate with any (smooth) vector field $v$ on $M$ the tangent vector $\mathcal{V}_v$ at $\rho \in \mathcal{P}^{\infty}(M)$ via its action
\begin{equation}
  \label{eq:ags-tangent}
   (\mathcal{V}_v F)(\rho) :=  \frac{\dd}{\dd\epsilon}\Big|_{\epsilon=0} F(\rho  - \epsilon \divg(\rho \Pi_\rho(v))),
\end{equation}
where $\Pi_\rho(v)$ is the orthogonal projection of $v$ onto the space
\[
T_{\rho}\mathcal{P}^\infty(M) := \overline{\{\grad \phi ; \phi \in C^\infty_c(M)\}}^{L^2_\rho},
\]
which we regard as the {\it tangent space} to $\mathcal{P}^{\infty}(M)$ at $\rho \dd \vol$. The tangent vector \eqref{eq:ags-tangent} can be interpreted intuitively in terms of the {\it continuity equation}
\begin{equation} \label{eqn:continuity.equation}
\frac{\partial \rho_t}{\partial t} = - \divg (\rho_t w_t),
\end{equation}
where $w_t$ (corresponding to $\Pi_\rho(v)$ in \eqref{eq:ags-tangent}) is a time-dependent vector field. We give an example which will be used in several results below.

\begin{lemma} \label{lem:displacement.interpolation.fields}
On a Bregman manifold $\M$ (where $g$ is the Hessian metric), consider a curve $(\mu_t)_{0 \leq t \leq 1} \subset \mathcal{P}^{\infty}(\M)$. If $(\mu_t)$ is a primal displacement interpolation induced by the convex gradient $Dh : \Y \rightarrow \X$, then $(\mu_t)$ satisfies the continuity equation  \eqref{eqn:continuity.equation} at time $t = 0$ with
\[
v_0 = \grad \phi, \quad \phi = (h - \Omega^*) \circ \iota^*.
\]
Similarly, if $(\mu_t)$ is a dual displacement interpolation induced by the convex gradient $Df : \X \rightarrow \Y$, then the continuity equation is satisfied at $t = 0$ with
\[
v_0 = \grad \phi, \quad \phi = (f - \Omega) \circ \iota.
\]
In both cases we let $\dot{\mu}_0 := \mathcal{V}_{\grad \phi}$ be the initial velocity vector of the curve.
\end{lemma}
\begin{proof}
We consider here the dual displacement interpolation; the primal displacement interpolation can be handled similarly. For $0 \leq t \leq 1$, let
\[
T_t = (\iota^*)^{-1} \circ ((1 - t) D \Omega + t D f) \circ \iota : \M \rightarrow \M.
\]
Then $\mu_t = (T_t)_{\#} \mu_0$, from which it is well established that $\mu_t$ solves the continuity equation \ref{eqn:continuity.equation} at time $0$ with $v_0 = \frac{\dd}{\dd t}T_t\big\vert_{t=0}$ \cite[\S 4.1]{S15}. Note that for each $p$, $t \mapsto (\iota^* \circ T_t)(p) = (1 - t) D\Omega(p) + tDf(p)$ is a constant velocity straight line in $\Y$.

Consider on $M$ the function $\phi = (f - \Omega) \circ \iota$. Using \eqref{eqn:metric.inverse}, the Riemannian gradient of $\phi$ can be expressed under the primal coordinate system by
\[
\grad \phi(p) = \sum_{j = 1}^d D_{ij} \Omega^*(y_p) (D_i f(x_p) - D_i\Omega(x_p)) \left. \frac{\partial}{\partial x^j} \right|_p.
\]
On the other hand, since $y_p = D\Omega(x_p)$ and $\frac{\partial y}{\partial x}(p) = D^2 \Omega(x_p) = (D^2 \Omega(x_p))^{-1}$, for $p \in M$ we have
\[
\frac{\dd}{\dd t} \Big|_{t = 0} \big(T_t (p)\big) = (\grad \phi)(p).
\]
Thus $v_0 = \grad \phi$ is the initial velocity field of the flow associated with the dual displacement interpolation. It follows that $(\mu_t)$ satisfies the continuity equation at $t = 0$ with $v_0 = \grad \phi_0$.
\end{proof}

For $\rho \in \mathcal{P}^{\infty}(\M)$, {\it Otto's Riemannian metric} is defined by
\begin{equation} \label{eq:extended-otto-metric}
\mathfrak{g}_{\rho}(\mathcal{V}_v, \mathcal{V}_w) = \llangle \mathcal{V}_v,\mathcal{V}_w \rrangle_{\rho} := \int_{\M} \langle \Pi_\rho(v), \Pi_\rho(w)\rangle \rho\dd \vol.
\end{equation}
Let $\overline{\nabla}$ be the Levi-Civita connection on $(\M, g)$. Following Lott, the {\it Levi-Civita connection} $\overline{\nnabla}$ on $\mathcal{P}^{\infty}(\M)$ is defined for vector fields $\mathcal{V}_v, \mathcal{V}_w$ on $\mathcal{P}^{\infty}(M)$ (corresponding to vector fields $v, w$ on $\M$) by
\begin{equation} \label{eq:key-ident}
  \overline{\nnabla}_{\mathcal{V}_v}\mathcal{V}_w := \mathcal{V}_{\overline{\nabla}_v w}.
\end{equation}
The connection is used to define the geodesic equation and curvature tensor. What will, for our purposes, prove essential about this definition is that it is given entirely in terms of the Levi-Civita connection $\overline{\nabla}$ on $\M$.

\subsection{Generalized dualistic structure} \label{sec:BW.Otto}
Let $\M$ be a Bregman manifold. Using the dualistic structure $(g, \nabla, \nabla^*)$ on $\M$, we construct a {\it generalized dualistic structure} $(\mathfrak{g}, \nnabla, \nnabla^*)$ on $\mathcal{P}^{\infty}(\M)$. Some of the proofs are deferred to Appendix \ref{sec:appendix.proofs}.

First, we let $\mathfrak{g}$ be Otto's metric on $\mathcal{P}^{\infty}(\M)$ defined by \eqref{eq:extended-otto-metric}. We observe that $\mathfrak{g}$ arises in the second order expansion of the Bregman-Wasserstein divergence. This is precisely how the Hessian metric $g$ is constructed from the Bregman divergence, see \eqref{eqn:M.metric}.

\begin{proposition}[Second order expansion of Bregman-Wasserstein divergence] \label{prop:BW.metric}
Let $(\mu_t)_{0 \leq t \leq 1} \subset \mathcal{P}^{\infty}(\M)$ be a primal displacement interpolation. Then
\begin{equation} \label{eq:metric-compatibility2}
\mathscr{B}(\mu_t,\mu_0) = \frac{t^2}{2} \mathfrak{g}( \dot{\mu}_0, \dot{\mu}_0 ) + o(t^2).
\end{equation}
Similarly, if $(\mu_t)$ is a dual displacement interpolation then
\begin{equation} \label{eq:metric-compatibility}
\mathscr{B}(\mu_0,\mu_t) = \frac{t^2}{2} \mathfrak{g}( \dot{\mu}_0, \dot{\mu}_0) + o(t^2).
\end{equation}
\end{proposition}
\begin{proof}
We prove \eqref{eq:metric-compatibility} and leave \eqref{eq:metric-compatibility2} to the reader. We assume $\mu_t$ is a dual displacement interpolation, so is necessarily given by $\mu_t^{\Y} = ((1 - t)D \Omega + tDf)_{\#} \mu_0^{\X}$ for $Df : \X \rightarrow \Y$ the gradient of a suitable convex function. Let
\[
T_t = (\iota^*)^{-1} \circ ((1 - t) D\Omega + tDf) \circ \iota.
\]
By Proposition \ref{prop:solving.BW.transport}, we have
\begin{equation} \label{eqn:BW.metric.proof}
\mathscr{B}(\mu_0, \mu_t) = \int_{\M} {\bf B}(p, T_t(p)) \dd \mu_0(p).
\end{equation}

As in the proof of Lemma \ref{lem:displacement.interpolation.fields}, we have $\frac{\dd}{\dd t} |_{t = 0} T_t(p) = \grad \phi (p)$, where $\phi = (f - \Omega) \circ \iota$. It follows that $\frac{\dd}{\dd t} \big|_{t = 0} {\bf B}(p, T_t(p)) = 0$ and, by \eqref{eqn:M.metric},
\[
\frac{\dd^2}{\dd t^2} \Big|_{t = 0} {\bf B}(p, T_t(p)) = g(\grad \phi(p), \grad \phi(p)).
\]
Differentiating \eqref{eqn:BW.metric.proof} under the integral sign, which is possible since we are operating in $\mathcal{P}^{\infty}(\M)$, we have $\frac{\dd}{\dd t} \big|_{t = 0} \mathscr{B}(\mu_0, \mu_t) = 0$ and
\begin{equation*}
\begin{split}
\frac{\dd^2}{\dd t^2} \Big|_{t = 0} \mathscr{B}(\mu_0, \mu_t) &= \int_{\M} g(\grad \phi(p), \grad \phi(p)) \dd \mu_0(p)= \mathfrak{g}(\dot{\mu}_0, \dot{\mu}_0).
\end{split}
\end{equation*}
This completes the proof of \eqref{eq:metric-compatibility}.
\end{proof}

Next we define a pair of connections $(\nnabla, \nnabla^*)$ on $\mathcal{P}^{\infty}(\M)$, and show that they are torsion-free and conjugate with respect to $\mathfrak{g}$. Our definitions are natural extensions of Lott's (see \eqref{eq:key-ident}).

\begin{definition}[Primal and dual connections on $\mathcal{P}^{\infty}(\M)$] \label{defn:conjugate-connections-PM}
For vector fields $v,w$ on $\M$, we define the primal covariant derivative $\nnabla_{\mathcal{V}_v} \mathcal{V}_w$ and dual covariant derivative $\nnabla_{\mathcal{V}_v}^* \mathcal{V}_w$ by
\begin{equation} \label{eq:pm-primal-connection}
\nnabla_{\mathcal{V}_v} \mathcal{V}_{w} := \mathcal{V}_{\nabla_v w}, \quad
\nnabla_{\mathcal{V}_{v}}^* \mathcal{V}_{w} := \mathcal{V}_{\nabla_v^*w}.
\end{equation}
\end{definition}

Formally, we simply replace the Levi-Civita connection $\overline{\nabla}$ in \eqref{eq:key-ident} by the primal or dual connection to obtain \eqref{eq:pm-primal-connection}. The following theorem is the main result of this subsection. It shows that the triple $(\mathfrak{g}, \nnabla, \nnabla^*)$ can be regarded as a {\it generalized dualistic structure} on $\mathcal{P}^{\infty}(M)$.

\begin{theorem}[Generalized dualistic structure on $\mathcal{P}^{\infty}(M)$] \label{thm:BW.conjugate.connections} { \ }
\begin{itemize}
\item[(i)] Both $\nnabla$ and $\nnabla^*$ are torsion-free in the sense that
\begin{equation} \label{eqn:BW.torsion.free}
\begin{split}
 \nnabla_{\mathcal{V}_{v}} \mathcal{V}_{w} - \nnabla_{\mathcal{V}_{w}} \mathcal{V}_{v} = \nnabla_{\mathcal{V}_{v}}^* \mathcal{V}_{w} - \nnabla_{\mathcal{V}_{w}}^* \mathcal{V}_{v} = \llbracket \mathcal{V}_{v}, \mathcal{V}_{w} \rrbracket := \mathcal{V}_{v}\mathcal{V}_{w} - \mathcal{V}_{w} \mathcal{V}_{v},
\end{split}
\end{equation}
where $\llbracket \cdot, \cdot \rrbracket$ is the Lie bracket on $\mathcal{P}^{\infty}(M)$.
\item[(ii)] The two connections $\nnabla$ and $\nnabla^*$ are conjugate with respect to $\mathfrak{g}$ in the sense that for vector fields $\mathcal{V}_{u}, \mathcal{V}_{v}, \mathcal{V}_{w}$ we have
\begin{equation}\label{eq:wass-conjugate}
\mathcal{V}_{u}\mathfrak{g}(\mathcal{V}_{v}, \mathcal{V}_{w})= \mathfrak{g}(\nnabla_{\mathcal{V}_{u}} \mathcal{V}_{v}, \mathcal{V}_{w}) + \mathfrak{g}(\mathcal{V}_{v}, \nnabla_{\mathcal{V}_{u}}^* \mathcal{V}_{w}).
\end{equation}
\item[(iii)] The average of the two connections is $\frac{1}{2}(\nnabla + \nnabla^*) = \overline{\nnabla}$, the Levi-Civita connection on $\mathcal{P}^{\infty}(M)$ defined by \eqref{eq:key-ident}.
\end{itemize}
\end{theorem}

\begin{remark}[Infinite dimensional statistical manifolds]
Using functional analytic techniques, infinite dimensional manifolds of probability distributions have been constructed and studied by Pistone and others, see e.g., \cite{ay2015information, gibilisco1998connections, pistone2022affine, pistone1995infinite} and their references. Connections (such as the Amari-Chentsov connection) can also be constructed on these manifolds. Conceptually, these manifolds are related to infinite dimensional extensions of exponential families. Our approach, which is based on optimal transport, is quite different. Also see \cite{ay2024information} for another recent approach based on OT.
\end{remark}

We now relate the connections $\nnabla, \nnabla^*$ with the Bregman-Wasserstein divergence $\mathscr{B}$. On a Bregman manifold, the connections $\nabla, \nabla^*$ can be defined in terms of ${\bf B}$ by \eqref{eqn:dualistric.geometry.divergence}. Similarly to the Riemannian case \cite[Lemma 3]{L08}, the Christoffel symbols of $\nnabla$ and $\nnabla^*$ have simple expressions.

\begin{proposition}[Christoffel symbols of $\nnabla$ and $\nnabla^*$] \label{cor:christoffel}
  For vector fields $\mathcal{V}_u,\mathcal{V}_v,\mathcal{V}_w \in T_\rho \mathcal{P}^\infty(M)$ the generalized Christoffel symbols for $\nnabla$ and $\nnabla^*$ are given as follows:
  \begin{align*}
    \GGamma_{\mathcal{V}_v,\mathcal{V}_w,\mathcal{V}_u}(\rho) &:= \llangle \nnabla_{\mathcal{V}_{v}}\mathcal{V}_{w} , \mathcal{V}_{u} \rrangle_{\rho} =   \int_{\M}\langle\nabla_{v}w , u\rangle \rho \dd \vol, \\
    \GGamma^*_{\mathcal{V}_v,\mathcal{V}_w,\mathcal{V}_u}(\rho) &:= \llangle \nnabla^*_{\mathcal{V}_{v}}\mathcal{V}_{w} , \mathcal{V}_{u} \rrangle_{\rho} =   \int_{\M}\langle\nabla^*_{v}w , u\rangle \rho\dd \vol.
  \end{align*}
\end{proposition}

Alternatively, the skewness tensor $T = \nabla^* - \nabla$ (see \eqref{eqn:cubic.tensor}) arises in the third order expansion of the Bregman divergence. We state the analogous result for a dual displacement interpolation. Clearly, a similar expansion exists along a primal displacement interpolation.

\begin{theorem}[Third order expansion of Bregman-Wasserstein divergence]  \label{thm:cubic.expansion}
  Let $\mu_t$ be a dual displacement interpolation with $\dot{\mu_0} = \mathcal{V}_{\grad \phi}$ (see Lemma \ref{lem:displacement.interpolation.fields}). Then
  \[
  \mathscr{B}(\mu_0,\mu_t) = \frac{t^2}{2}\mathfrak{g}(\mathcal{V}_\phi,\mathcal{V}_\phi)+\frac{t^3}{3!}\mathfrak{T}_{\mathcal{V}_\phi,\mathcal{V}_\phi,\mathcal{V}_\phi} + o(t^3),
  \]
  where $\mathfrak{T}_{\mathcal{V}_\phi,\mathcal{V}_\phi,\mathcal{V}_\phi} =  \GGamma^*_{\mathcal{V}_\phi,\mathcal{V}_\phi,\mathcal{V}_\phi} - \GGamma_{\mathcal{V}_\phi,\mathcal{V}_\phi,\mathcal{V}_\phi}$.%
\end{theorem}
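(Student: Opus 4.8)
The plan is to reduce the statement to the pointwise third-order expansion of the base-space Bregman divergence already recorded in Proposition \ref{prop:divergence.expansion}, and then integrate against $\mu_0$. First I would invoke Proposition \ref{prop:solving.BW.transport} to write $\mathscr{B}(\mu_0, \mu_t) = \int_M {\bf B}(p, T_t(p))\, \dd\mu_0(p)$, where $T_t = (\iota^*)^{-1} \circ ((1-t)D\Omega + tDf)\circ\iota$ is the optimal map realizing the dual displacement interpolation. The crucial observation is that for each fixed $p$ the curve $t \mapsto T_t(p)$ is a \emph{dual geodesic} on $M$: in the dual coordinate system $y_{T_t(p)} = y_p + t(Df(x_p) - D\Omega(x_p))$ is a constant-velocity straight line in $Y$, so property (ii) of Section \ref{sec:prelim.Bregman} applies, with $T_0(p) = p$. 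By Lemma \ref{lem:displacement.interpolation.fields} its initial velocity is $\frac{\dd}{\dd t}\big|_{t=0} T_t(p) = \grad\phi(p)$, where $\phi = (f-\Omega)\circ\iota$.

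Next I would apply Proposition \ref{prop:divergence.expansion} to this geodesic with $v = \grad\phi(p)$. Rather than integrate the pointwise Taylor expansion directly, I find it cleaner to read off from that expansion the pointwise derivatives at $t=0$, namely
\[
\frac{\dd}{\dd t}\Big|_{0} {\bf B}(p, T_t(p)) = 0, \quad \frac{\dd^2}{\dd t^2}\Big|_0 {\bf B}(p, T_t(p)) = g(\grad\phi, \grad\phi)(p), \quad \frac{\dd^3}{\dd t^3}\Big|_0 {\bf B}(p, T_t(p)) = T(\grad\phi, \grad\phi, \grad\phi)(p),
\]
which are legitimate since $t\mapsto{\bf B}(p,T_t(p))$ is a smooth function of $t$. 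Setting $F(t) := \mathscr{B}(\mu_0,\mu_t) = \int_M {\bf B}(p, T_t(p))\, \dd\mu_0(p)$ and working in $\mathcal{P}^\infty(M)$, I would differentiate three times under the integral sign to compute $F'(0)=0$, $F''(0) = \int_M g(\grad\phi,\grad\phi)\,\dd\mu_0$, and $F'''(0) = \int_M T(\grad\phi,\grad\phi,\grad\phi)\,\dd\mu_0$; combined with $F(0)=\int_M{\bf B}(p,p)\,\dd\mu_0 = 0$, Taylor's theorem then yields the expansion with Peano remainder $o(t^3)$.

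It remains to identify the coefficients. The second-order term $\int_M g(\grad\phi, \grad\phi)\, \dd\mu_0$ equals $\mathfrak{g}(\mathcal{V}_\phi, \mathcal{V}_\phi)$ directly from the definition \eqref{eq:otto-metric} of Otto's metric. For the cubic term I would substitute the definition \eqref{eqn:cubic.tensor} of the skewness tensor, $T(v,v,v) = g(\nabla^*_v v - \nabla_v v, v)$, and then apply Corollary \ref{cor:christoffel} with base vector field $\grad\phi$ in all three slots, giving
\[
\int_M T(\grad\phi, \grad\phi, \grad\phi)\, \dd\mu_0 = \GGamma^*_{\mathcal{V}_\phi, \mathcal{V}_\phi, \mathcal{V}_\phi} - \GGamma_{\mathcal{V}_\phi, \mathcal{V}_\phi, \mathcal{V}_\phi} = \mathfrak{T}_{\mathcal{V}_\phi, \mathcal{V}_\phi, \mathcal{V}_\phi},
\]
which is precisely the claimed cubic coefficient (recall $3!=6$ matches the $\tfrac{1}{6}F'''(0)$ from Taylor's theorem).

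The only genuine subtlety, and hence the main obstacle, is ensuring the pointwise $o(t^3)$ remainders aggregate to a global $o(t^3)$ term; this is exactly why I phrase the argument through differentiation under the integral sign rather than integrating the pointwise expansion. In $\mathcal{P}^\infty(M)$ the standing regularity hypotheses guarantee that the $t$-derivatives of ${\bf B}(p, T_t(p))$ up to order three are dominated uniformly for $t$ near $0$, so the interchange of differentiation and integration — and therefore the validity of the Peano-form remainder for $F$ — is justified. Everything else reduces to matching the base-space tensors $g$ and $T$ with their lifts $\mathfrak{g}$ and $\mathfrak{T}$, which is bookkeeping given Corollary \ref{cor:christoffel}.
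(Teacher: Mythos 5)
Your proof is correct and follows essentially the same route as the paper: reduce to $\int_M \mathbf{B}(p,T_t(p))\,\dd\mu_0(p)$ via Proposition \ref{prop:solving.BW.transport}, differentiate three times under the integral sign, and identify the coefficients with $\mathfrak{g}$ and $\GGamma^*-\GGamma$ via Corollary \ref{cor:christoffel}. The only (harmless) difference is that you obtain the pointwise cubic coefficient by invoking Proposition \ref{prop:divergence.expansion} together with the intrinsic definition \eqref{eqn:cubic.tensor} of the skewness tensor, whereas the paper computes $\frac{\dd^3}{\dd t^3}\big|_{t=0}\mathbf{B}(p,T_t(p))$ explicitly in dual coordinates as $D_{y_iy_jy_k}\Omega^*$ contracted with $\grad\phi$ and then matches this with the coordinate expression of the cubic tensor.
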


\subsection{Parallel transport and curvature} \label{sec:parallel.transport}
Let $\mu_t$ be a curve in $\mathcal{P}^\infty(\M)$ with $\mu_t = \rho_t \dd \vol$ and velocity $\dot{\mu}_t = \mathcal{V}_{v_t}$. Let $\mathcal{V}_{w_t}$ be a vector field (on the Wasserstein space) along $\mu_t.$ For each $t$ we have that $v_t,w_t$ are vector fields on $\M$ satisfying $\Pi_{\rho_t}(v_t) = v_t$ and similarly for $w_t$. In \cite[eq.~4.15]{L08} Lott introduced a notion of {\it parallel transport} in the Wasserstein space and used it to relate geodesics with displacement interpolation. The associated equation of parallel transport is $\mathcal{V}_{\partial_t w_t} + \overline{\nnabla}_{\mathcal{V}_{v_t}}\mathcal{V}_{w_t} = 0$, and in this case $\mathcal{V}_{w_t}$ is said to be {\it parallel} along the curve $\mu_t$. The natural extension to our setting is the following:
\begin{itemize}
\item $\mathcal{V}_{w_t}$ is a {\it primal parallel transport} along $\mu_t$ if $\mathcal{V}_{\partial_t w_t} + \nnabla_{\mathcal{V}_{v_t}}\mathcal{V}_{w_t} = 0$.%
  \item $\mathcal{V}_{w_t}$ is a {\it dual parallel transport} along $\mu_t$ if $\mathcal{V}_{\partial_t w_t} + \nnabla_{\mathcal{V}_{v_t}}^*\mathcal{V}_{w_t} = 0$.
\end{itemize}
The following proposition is a straightforward consequence of  the same reasoning employed by Lott \cite[Proposition 3]{L08}.
\begin{proposition}[Parallel transport equations] { \ } \label{prop:par-transport}
  \begin{enumerate}
  \item[(i)] $\mathcal{V}_{w_t}$ is a primal parallel transport along $\mu_t$ provided
    \[ \divg\left(\rho_t\left(\partial_tw_t + \nabla _{v_t}w_t\right)\right) = 0. \]

  \item[(ii)] $\mathcal{V}_{w_t}$ is a dual parallel transport along $\mu_t$ provided
    \[ \divg\left(\rho_t\left(\partial_tw_t + \nabla^*_{v_t}w_t\right)\right) = 0.\]
  \end{enumerate}
\end{proposition}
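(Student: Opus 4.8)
The plan is to translate the coordinate-free parallel transport equations, which live on $\mathcal{P}^\infty(M)$, into the stated divergence equations on the base manifold $M$, following the strategy Lott uses in \cite[Proposition 3]{L08}. First I would observe that the two ingredients of the primal parallel transport equation are both tangent vectors of the form $\mathcal{V}_{(\cdot)}$: by Definition \ref{defn:conjugate-connections-PM} the covariant derivative satisfies $\nnabla_{\mathcal{V}_{v_t}}\mathcal{V}_{w_t} = \mathcal{V}_{\nabla_{v_t}w_t}$ (its action on any functional is, verbatim, the action of the tangent vector associated to the vector field $\nabla_{v_t}w_t$), while $\mathcal{V}_{\partial_t w_t}$ is the tangent vector associated to the vector field $\partial_t w_t$. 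Since the action of $\mathcal{V}_{(\cdot)}$ on functionals is linear in the underlying vector field (the first-order term in \eqref{eq:ags-tangent} depends linearly on the perturbation $\divg(\rho\Pi_\rho(\cdot))$), the left-hand side of the primal parallel transport equation equals $\mathcal{V}_{\partial_t w_t + \nabla_{v_t}w_t}$. Thus the equation reads $\mathcal{V}_z = 0$ with $z := \partial_t w_t + \nabla_{v_t}w_t$.

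The remaining step is to characterize when such a tangent vector vanishes. Here I would invoke the technical claim established in the proof of Theorem \ref{thm:BW.conjugate.connections}(i): a tangent vector at $\rho_t$ is determined by its action on the linear functionals $F_\phi(\rho) = \int_M \phi\,\rho\,\dd\vol$, $\phi \in C^\infty_c(M)$. Using the definition of the Riemannian divergence \eqref{eq:div-def} together with the self-adjointness of the orthogonal projection, a direct computation gives
\[
(\mathcal{V}_z F_\phi)(\rho_t) = \int_M \langle \grad\phi, z\rangle \rho_t\,\dd\vol = -\int_M \phi\,\divg(\rho_t z)\,\dd\vol .
\]
This vanishes for every test function $\phi$ precisely when $\divg(\rho_t z) = 0$, so $\mathcal{V}_z = 0$ if and only if $\divg(\rho_t z) = 0$. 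Substituting $z = \partial_t w_t + \nabla_{v_t} w_t$ then yields part (i); replacing $\nabla$ by $\nabla^*$ throughout (equivalently, using \eqref{eq:pm-dual-connection} in place of \eqref{eq:pm-primal-connection}) yields part (ii) by the identical argument.

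The only point requiring care — and the one I expect to be the main, if modest, obstacle — is the interplay with the projection $\Pi_{\rho_t}$. The tangent vectors $\mathcal{V}_{\partial_t w_t}$ and $\nnabla_{\mathcal{V}_{v_t}}\mathcal{V}_{w_t}$ only see the projected fields $\Pi_{\rho_t}(\partial_t w_t)$ and $\Pi_{\rho_t}(\nabla_{v_t}w_t)$, so a priori the natural condition is $\Pi_{\rho_t}(z) = 0$ rather than $\divg(\rho_t z) = 0$. These are in fact equivalent: since $z - \Pi_{\rho_t}(z)$ is $L^2_{\rho_t}$-orthogonal to every gradient field, integration by parts gives $\divg(\rho_t(z - \Pi_{\rho_t}(z))) = 0$, whence $\divg(\rho_t z) = \divg(\rho_t \Pi_{\rho_t}(z))$, and the projection may be dropped inside the divergence. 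This is precisely what allows the clean, projection-free equations stated in the proposition.
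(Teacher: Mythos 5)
Your proof is correct and follows exactly the route the paper intends: the paper omits the argument entirely, deferring to Lott's Proposition 3, and your computation --- writing the left-hand side as $\mathcal{V}_z$ with $z = \partial_t w_t + \nabla_{v_t}w_t$, testing against the linear functionals $F_\phi$, and reducing $\mathcal{V}_z = 0$ to $\divg(\rho_t z) = 0$ --- is precisely that reasoning. The point you single out, that the projection $\Pi_{\rho_t}$ may be dropped inside the weighted divergence because $z - \Pi_{\rho_t}(z)$ is $L^2_{\rho_t}$-orthogonal to all gradients, is indeed the only subtlety and you handle it correctly.
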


We prove two results which exhibit that the theory introduced so far is consistent. First, if $\mathcal{V}_{w^{(1)}_t}$ and $\mathcal{V}_{w^{(2)}_t}$ are, respectively, primal and dual parallel transports then their inner product is preserved -- a direct analogue of a characteristic property of the dualistic geometry on $\M$ (see \cite[Chapter 6]{A16}. Second, we show that the primal and dual displacement interpolations are, respectively, primal and dual geodesics in the sense of parallel transport.

\begin{proposition}
  Let $\mu_t$ be as above with $\dot{\mu}_t = \mathcal{V}_{v_t}$. Let $\mathcal{V}_{w^{(1)}_t},\mathcal{V}_{w^{(2)}_t}$ be, respectively, primal and dual parallel transports along $\mu_t$.
 Then
\[ \frac{\dd}{\dd t}\llangle \mathcal{V}_{w^{(1)}_t}, \mathcal{V}_{w^{(2)}_t} \rrangle_{\mu_t} = 0.\]
\end{proposition}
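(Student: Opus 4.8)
The plan is to reduce everything to a computation on the base manifold $M$, exactly in the spirit of the proof of Theorem \ref{thm:BW.conjugate.connections}(ii). Writing $\mu_t = \rho_t\,\dd\vol$ and using that $\Pi_{\rho_t}(w^{(1)}_t) = w^{(1)}_t$ and $\Pi_{\rho_t}(w^{(2)}_t) = w^{(2)}_t$, the extended Otto metric \eqref{eq:extended-otto-metric} gives
\[
\llangle\mathcal{V}_{w^{(1)}_t},\mathcal{V}_{w^{(2)}_t}\rrangle_{\mu_t} = \int_M\langle w^{(1)}_t, w^{(2)}_t\rangle\rho_t\,\dd\vol.
\]
I would then differentiate in $t$, which (since all such operations are legitimate in $\mathcal{P}^\infty(M)$) splits into a contribution from $\partial_t$ hitting the integrand and a contribution from $\partial_t\rho_t$.

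For the density term I would substitute the continuity equation $\partial_t\rho_t = -\divg(\rho_t v_t)$ and integrate by parts using \eqref{eq:div-def}, turning $-\int_M\langle w^{(1)}_t, w^{(2)}_t\rangle\divg(\rho_t v_t)\,\dd\vol$ into $\int_M v_t\big(\langle w^{(1)}_t, w^{(2)}_t\rangle\big)\rho_t\,\dd\vol$. The pointwise derivative $v_t(\langle w^{(1)}_t, w^{(2)}_t\rangle)$ is expanded via the conjugacy of $\nabla,\nabla^*$ on $M$, \eqref{eqn:conjugate.connections}, as $\langle\nabla_{v_t}w^{(1)}_t, w^{(2)}_t\rangle + \langle w^{(1)}_t, \nabla^*_{v_t}w^{(2)}_t\rangle$. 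Combining this with the $\partial_t$-of-integrand term (where the fixed bilinear form $g$ obeys the product rule, as $g$ does not depend on $t$) and regrouping gives
\[
\frac{\dd}{\dd t}\llangle\mathcal{V}_{w^{(1)}_t},\mathcal{V}_{w^{(2)}_t}\rrangle_{\mu_t} = \int_M\langle\partial_t w^{(1)}_t + \nabla_{v_t}w^{(1)}_t,\ w^{(2)}_t\rangle\rho_t\,\dd\vol + \int_M\langle w^{(1)}_t,\ \partial_t w^{(2)}_t + \nabla^*_{v_t}w^{(2)}_t\rangle\rho_t\,\dd\vol.
\]

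To finish I would feed in the parallel transport equations, and this is the step I expect to require the most care, since it involves reconciling the two equivalent formulations of ``parallel''. The Proposition on parallel transport equations records primal parallelism as $\divg\big(\rho_t(\partial_t w^{(1)}_t + \nabla_{v_t}w^{(1)}_t)\big) = 0$, and I would observe that $\divg(\rho u)=0$ is precisely the statement that $u$ is $L^2_{\rho}$-orthogonal to $T_\rho\mathcal{P}^\infty(M)$ (integrate against $\grad\phi$ and use \eqref{eq:div-def}), that is, $\Pi_{\rho_t}(\partial_t w^{(1)}_t + \nabla_{v_t}w^{(1)}_t) = 0$; similarly $\Pi_{\rho_t}(\partial_t w^{(2)}_t + \nabla^*_{v_t}w^{(2)}_t) = 0$ for the dual transport. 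Since $w^{(2)}_t = \Pi_{\rho_t}(w^{(2)}_t)$ already lies in the tangent space and the projection $\Pi_{\rho_t}$ is self-adjoint on $L^2(\rho_t\,\dd\vol)$, the first integral equals $\int_M\langle\Pi_{\rho_t}(\partial_t w^{(1)}_t + \nabla_{v_t}w^{(1)}_t),\ w^{(2)}_t\rangle\rho_t\,\dd\vol = 0$; the second vanishes in the same way. Hence the derivative is identically zero, which is the claim. This mirrors exactly the classical Riemannian fact that conjugate connections preserve the pairing of a $\nabla$-parallel field with a $\nabla^*$-parallel field, now lifted to $(\mathcal{P}^\infty(M),\mathfrak{g})$.
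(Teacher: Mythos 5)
Your proposal is correct and follows essentially the same route as the paper: expand Otto's metric as an integral over $M$, differentiate using the continuity equation, integrate by parts, apply the base-space conjugacy \eqref{eqn:conjugate.connections} to regroup into the two parallel-transport expressions, and then kill each integral using that a $\rho_t$-weighted divergence-free field is $L^2_{\rho_t}$-orthogonal to the tangent space containing $w^{(2)}_t$ (resp.\ $w^{(1)}_t$). The paper phrases this last step by approximating $w^{(2)}_t$ with smooth gradients and integrating by parts, while you phrase it via $\Pi_{\rho_t}$ being a self-adjoint projection vanishing on such fields; these are the same argument.
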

\begin{proof}
  To begin, recall from \eqref{eq:extended-otto-metric} that
  \[ \llangle \mathcal{V}_{w^{(1)}_t}, \mathcal{V}_{w^{(2)}_t} \rrangle_{\mu_t} = \int_{\M}\langle w^{(1)}_t,w^{(2)}_t\rangle\rho_t \dd \vol. \]
  Thus on differentiating, we have
  \begin{equation*}
  \begin{split}
&\frac{\dd}{\dd t}\llangle \mathcal{V}_{w^{(1)}_t}, \mathcal{V}_{w^{(2)}_t} \rrangle_{\mu_t} \\
&= \int_{\M}[\langle\partial_tw^{(1)}_t,w^{(2)}_t\rangle + \langle w^{(1)}_t,\partial_tw^{(2)}_t\rangle]\rho(t) \dd \vol - \int_{\M}\langle w^{(1)}_t,w^{(2)}_t\rangle \divg (\rho_t v_t) \dd \vol\\
&= \int_{\M} [\langle\partial_tw^{(1)}_t,w^{(2)}_t\rangle + \langle w^{(1)}_t,\partial_tw^{(2)}_t\rangle]\rho_t \dd \vol \int_{\M} v_t(\langle w^{(1)}_t,w^{(2)}_t\rangle) \rho_t \dd \vol.
  \end{split}
  \end{equation*}
  Now using that the primal and dual connections are conjugate on the base space (see \eqref{eqn:conjugate.connections}) this becomes
\begin{equation*}
\begin{split}
 &   \frac{\dd}{\dd t}\llangle \mathcal{V}_{w^{(1)}_t}, \mathcal{V}_{w^{(2)}_t} \rrangle_{\mu_t} \\
 &= \int_{\M}[\langle\partial_tw^{(1)}_t+\nabla_{v_t}w^{(1)}_t,w^{(2)}_t \rangle+\langle w^{(1)}_t,\partial_tw^{(2)}_t+\nabla^*_{v_t}w^{(2)}_t\rangle]\rho \dd \vol.
\end{split}
\end{equation*}
This expression is $0$ by our assumption that $w^{(1)}_t$ and $w^{(2)}_t$ are, respectively, primal and dual parallel transports. To see, for example, that the integral of the first term in the integrand is $0$ approximate $w^{(2)}_t$ by a sequence of smooth gradients and integrate by parts. This approximation is possible by our assumption that $\mathcal{V}_{w^{(2)}_t}$ is a tangent vector to $\mathcal{P}^\infty(\M)$ at $\mu_t$.
\end{proof}

\begin{theorem} \label{thm:geodesic.equations}
Assume $(\mu_t)_{0 \leq t \leq 1} \subset \mathcal{P}^{\infty}(\M)$ is a primal displacement interpolation. Then $\dot{\mu}_t$ is a primal parallel transport along $\mu_t$. If, alternatively, $(\mu_t)$ is a dual displacement interpolation then $\dot{\mu}_t$ is a dual parallel transport along $\mu_t$.
\end{theorem}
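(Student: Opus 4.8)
The plan is to verify directly the criterion of the preceding Parallel Transport Equations proposition. Writing $\dot\mu_t = \mathcal{V}_{v_t}$, the assertion that $\dot\mu_t$ is primally parallel transported along $\mu_t$ amounts (taking $w_t = v_t$) to the identity
\[
\divg\big(\rho_t(\partial_t v_t + \nabla_{v_t} v_t)\big) = 0 .
\]
I will in fact prove the pointwise statement $\partial_t v_t + \nabla_{v_t} v_t = 0$, from which the displayed identity is immediate. The key idea is that the covariant acceleration on the left is exactly the obstruction to each particle of the interpolation being a primal geodesic of $M$, and by construction these particles are primal geodesics.

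To make this precise I would first fix the flow. By Definition \ref{def:primal.dual.displacement.interpolation}, a primal displacement interpolation satisfies $\mu_t = (T_t)_\#\mu_0$ where, in the primal chart $\iota : M \to X$, the map $T_t$ is represented by $\Phi_t := (1-t)D\Omega^* + tDh : Y \to X$. Since $\Omega^*$ is a regular Bregman generator and $h$ is convex, $(1-t)\Omega^* + t h$ is strictly convex for $t \in [0,1)$, so $\Phi_t$ is a diffeomorphism onto its (open) image, and the Eulerian velocity field in the primal chart is the well-defined vector field $V_t := (Dh - D\Omega^*)\circ \Phi_t^{-1}$. As in the proof of Lemma \ref{lem:displacement.interpolation.fields}, this $V_t$ represents $v_t$ and the continuity-equation correspondence gives $\dot\mu_t = \mathcal{V}_{v_t}$.

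Now comes the crucial observation: fixing a particle label $y$, the trajectory $t \mapsto \Phi_t(y) = (1-t)D\Omega^*(y) + tDh(y)$ is a constant-velocity straight line in $X$, i.e.\ a primal geodesic in the sense of property (ii) of Section \ref{sec:prelim.Bregman}, whose primal velocity $Dh(y) - D\Omega^*(y)$ does not depend on $t$. Hence the material derivative of $V_t$ along the flow vanishes,
\[
\frac{\dd}{\dd t}\big[V_t(\Phi_t(y))\big] = \frac{\dd}{\dd t}\big[Dh(y) - D\Omega^*(y)\big] = 0 .
\]
Because $\nabla$ is the Euclidean flat connection in the primal chart, this material derivative is precisely $(\partial_t v_t + \nabla_{v_t} v_t)$ evaluated along the trajectory; as $\Phi_t$ is a diffeomorphism its trajectories cover the open set carrying $\mu_t$, so $\partial_t v_t + \nabla_{v_t} v_t = 0$ there and the criterion holds trivially. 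The dual case is obtained by interchanging the roles of $(\Omega, X, \nabla)$ and $(\Omega^*, Y, \nabla^*)$: in the dual chart the particles $t \mapsto (1-t)D\Omega(x) + tDf(x)$ are straight lines, $\nabla^*$ is flat there, and the same computation yields $\partial_t v_t + \nabla^*_{v_t} v_t = 0$, so $\dot\mu_t$ is a dual parallel transport along $\mu_t$.

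The step I expect to require the most care -- the only genuine subtlety -- is the bookkeeping that turns the Lagrangian straight-line description into the Eulerian field $v_t$ on $M$: namely the well-definedness of $v_t$ (invertibility of $\Phi_t$, valid for $t<1$, with the endpoint handled by continuity) and the identification of the Eulerian covariant acceleration $\partial_t v_t + \nabla_{v_t} v_t$ with the $t$-derivative of the velocity carried along a trajectory. Once this dictionary is installed, flatness of $\nabla$ (resp.\ $\nabla^*$) in the primal (resp.\ dual) chart makes the vanishing of the covariant acceleration automatic, and no analytic estimate beyond the standing regularity of $\mathcal{P}^\infty(M)$ is needed.
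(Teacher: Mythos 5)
Your proposal is correct and follows essentially the same route as the paper: both identify the Eulerian velocity field of the interpolation, observe that each particle trajectory is a constant-velocity straight line in the flat (primal or dual) chart so that the material derivative of the velocity vanishes, and then use the vanishing of the Christoffel symbols of $\nabla$ (resp.\ $\nabla^*$) in that chart to conclude $\partial_t v_t + \nabla_{v_t} v_t = 0$ (resp.\ with $\nabla^*$), which implies the parallel-transport criterion. The only cosmetic differences are that you work out the primal case while the paper writes out the dual one, and you are slightly more explicit about the invertibility of the interpolating map needed to define the Eulerian field.
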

\begin{proof}
  We just prove the latter; as usual the proof is similar in both cases. Working in the dual coordinates, because $\mu_t$ is a dual displacement interpolation we obtain
  \[ \mu_t^{\Y} = [(1-t) \mathrm{Id} +tDf \circ D\Omega^*]_{\#}\mu_0^{\Y}, \]
  for an appropriate convex function $f$. We will work in the dual coordinates. We recall that $\mu_t$ solves the continuity equation $\dot{\mu}_t = -\text{div}(v_t \mu_t)$, for $v_t$ such that $Y(t,y):=(1-t)y+tDf(D\Omega^*(y))$ is the flow of $v_t$. That is $v_t$, necessarily satisfies
  \begin{equation}
   \frac{\dd}{\dd t}Y(t,y) = v_t(Y(t,y)).\label{eq:ddt-v-def}
 \end{equation}
  Differentiating this expression yields
  \[ 0 = \frac{\dd^2}{\dd t^2}Y(t,y) = \frac{\partial v_t}{\partial t} + \frac{\partial v_t}{\partial y^i}\frac{\dd}{\dd t}Y^i.\]
  Here the first equality is because $Y(t,y)$ is linear in $t$. Using once again \eqref{eq:ddt-v-def} we have $0 = \frac{\partial v_t}{\partial t} + \frac{\partial v_t}{\partial y^i}v_t^i$.  
  Finally, because we are working in the dual coordinates and thus the Christoffel symbols for the dual connection vanish we've in fact computed $0 =  \frac{\partial v_t}{\partial t} + \nabla^*_{v_t}v_t$, which implies $\mathcal{V}_{v_t}$ is dually parallel transported along $\mu_t$.
\end{proof}

\begin{remark}
The equations in Proposition \ref{prop:par-transport} are not in, general, invariant under reversal of time. This reflects that in accordance with our definition of primal and dual displacement interpolation, the primal displacement inerpolation between $\mu_1$ and $\mu_0$ is not, in general, the time reversal of the primal displacement inerpolation between $\mu_0$ and $\mu_1$. This arises because the composition of $D\Omega$ or $D\Omega^*$ with the gradient of a convex function is not, in general, the gradient of a convex function.
\end{remark}

Next we study the curvature tensors of $\nnabla$ and $\nnabla^*$. For simplicity, we consider here only gradient vector fields on $\M$. To simplify the notation in this section we set $\mathcal{V}_i = \mathcal{V}_{\grad \phi_i}$ and $\Phi_i = \grad \phi_i$, $i = 1, \ldots, 4$.  Following the standard definition in differential geometry (see \cite{L18}), the {\it primal curvature tensor} $\mathfrak{R}$ is defined by
\begin{equation}
\begin{split}
 &\llangle \mathfrak{R}(\mathcal{V}_1, \mathcal{V}_2)\mathcal{V}_3, \mathcal{V}_4 \rrangle := \llangle\nnabla_{\mathcal{V}_1}\nnabla_{\mathcal{V}_2}\mathcal{V}_3-\nnabla_{\mathcal{V}_2}\nnabla_{\mathcal{V}_1}\mathcal{V}_3 - \nnabla _{\llbracket \mathcal{V}_1,\mathcal{V}_2\rrbracket}\mathcal{V}_3,\mathcal{V}_4
 \rrangle.\label{eq:curv-def}
 \end{split}
\end{equation}
The {\it dual curvature tensor} $\mathfrak{R}^*$ is defined similarly.

The sectional curvature is more subtle. For concreteness, in this paper we consider the direct extension of the Riemannian case. Namely, we define the {\it primal sectional curvature} in the plane generated by $\mathcal{V}_1,\mathcal{V}_2$ by
  \begin{align}\label{eq:sec-def}
    \mathfrak{K}(\mathcal{V}_1,\mathcal{V}_2) &:= \frac{\llangle \mathfrak{R}(\mathcal{V}_1,\mathcal{V}_2)\mathcal{V}_2,\mathcal{V}_1 \rrangle}{\llangle \mathcal{V}_1,\mathcal{V}_1 \rrangle  \ \llangle \mathcal{V}_2,\mathcal{V}_2\rrangle  - \llangle \mathcal{V}_1,\mathcal{V}_2 \rrangle^2} ,              \end{align}
  and similarly for the {\it dual sectional curvature} $\mathfrak{K}^*$. Note that since $\nnabla$ is generally not the Levi-Civita connection $\overline{\nnabla}$, some symmetries of the curvature tensor are lost and we may have $\mathfrak{K}(\mathcal{V}_1,\mathcal{V}_2) \neq \mathfrak{K}(\mathcal{V}_2,\mathcal{V}_1)$. See Opozda's paper \cite{opozda2015bochner} for a modified definition of sectional curvature which keeps the symmetry; its implications are discussed in Remark \ref{rmk:sectional.curvature.def} below.

  On the Bregman manifold which is dually flat, the primal and dual curvature tensors $R$, $R^*$ coincide as they both vanish. More generally (see e.g.~\cite[Proposition 8.1.4]{CU14}), on a statistical manifold the primal and dual curvature tensors coincide when one has constant sectional curvature. In \cite[Theorem 3]{nielsen20}, this result is even called the {\it Fundamental Theorem of Information Geometry}. While the generalized Pythagorean inequality (Theorem \ref{thm:BW.Pyth}) suggests that $(\mathfrak{g}, \nnabla, \nnabla^*)$ is not dually flat (when $d \geq 2$), we show that the sectional curvatures $\mathfrak{K}$ and $\mathfrak{K}^*$ are still ``essentially the same'' in the sense that $\mathfrak{K}(\mathcal{V}_1, \mathcal{V}_2) = \mathfrak{K}^*(\mathcal{V}_2, \mathcal{V}_1)$. On the other hand, the when $d=1$ the generalized Pythagorean inequality suggests that $(\mathfrak{g}, \nnabla, \nnabla^*)$ is dually flat, and here we demonstrate that rigorously. These results indicate the significance of the Bregman-Wasserstein geometry as an OT-based divergence.

\begin{theorem}[Primal and dual sectional curvatures] \label{thm:sectional.curvature}
  Let vector fields $\mathcal{V}_1,\mathcal{V}_2$ be as above. Recall that $d = \dim \M$. Then:
\begin{itemize}
\item[(i)] When $d = 1$ both the primal and dual curvatures $\mathfrak{R}, \mathfrak{R}^*$ and the primal and dual sectional curvatures $\mathfrak{K},\mathfrak{K}^{*}$ are identically $0$.%
\item[(ii)] When $d \geq 2$ we have the identity $\mathfrak{K}(\mathcal{V}_1,\mathcal{V}_2) =\mathfrak{K}^*(\mathcal{V}_2,\mathcal{V}_1)$.
\end{itemize}
\end{theorem}

\begin{remark}[Sign of the sectional curvature] \label{rmK;curvature.sign}
As mentioned in Section \ref{sec:Pyth}, the Levi-Civita connection $\overline{\nnabla}$ induced by Otto's metric on $\mathcal{P}^{\infty}(\mathbb{R}^d)$ is flat when $d = 1$ but has nonnegative sectional curvature when $d \geq 2$. A natural question is whether the primal and dual sectional curvatures are also nonnegative when $d \geq 2$. While the generalized Pythagorean theorem (Theorem \ref{thm:BW.Pyth}) suggests that this might be the case (see Remark \ref{rmk:Pyth.curvature}), we have been unable to prove this rigorously.
\end{remark}

\begin{remark}[Equality of primal and dual sectional curvature] \label{rmk:sectional.curvature.def}
If we use Opozda's definition of the curvature tensor and the sectional curvature \cite[Section 12.2]{opozda2015bochner}, then the sectional curvature is symmetric in $\mathcal{V}_1, \mathcal{V}_2$ and we obtain exact equality of the primal and dual sectional curvatures. However, as in Remark \ref{rmK;curvature.sign}, even with this modified definition we have not been able to determine the sign of the sectional curvature.
\end{remark}

\section{Experiments and Applications} \label{sec:applications}
\subsection{Implementation with neural OT} \label{sec:neural.OT}

Learning transport maps between distributions has been fruitfully
applied to many problems in science, and has garnered interest
from the machine learning community. Due to the explosion of OT applications in machine learning~\cite{peyre2019cot}, we provide
a brief overview of the techniques developed for the computation
of optimal transport maps and costs, then specialize to the Bregman-Wasserstein divergence. When the measures are discrete the OT problem is equivalent to the linear assignment problem and the {\it Hungarian algorithm}~\cite{kuhn1955hma} is well studied in this context. However, its cubic runtime is prohibitively expensive.

{\it Learning via regularization} yields improved learning efficiency. Cuturi~\cite{cuturi2013sdl}  popularized the use of {\it entropic regularization} to estimate optimal
couplings with the {\it Sinkhorn algorithm}, which scales quadratically in
the number of samples. Subsequently, it has been applied
to compute barycenters~\cite{cuturi2014fcw}, entropic gradient flows in the
Wasserstein space~\cite{peyre2015eaw} and more; see~\cite{peyre2019cot} for
a summary of recent progress. Recently, neural approaches have become
popular for their improved performance, owing to their universal
approximation guarantees and the development of large scale stochastic
optimization techniques. Such learning paradigms when applied to the OT problem
can be categorized into two main types: primal and dual solvers. A {\it primal solver}
learns the OT map directly with a loss function which relies on the primal
Monge-Kantorovich problem. The Monge
gap~\cite{UC23} is a recent example of this approach. {\it Dual solvers} instead exploit Kantorovich's duality to
estimate Kantorovich potentials which can be used to construct
the OT map.
Empirical evidence suggests dual approaches often outperform primal
ones, possibly due to relatively stable optimization landscapes afforded by
Kantorovich's duality~\cite{korotin2021dno}, and as a result, often have faster
training times.

\begin{figure}[t!]
  \begin{subfigure}[t]{0.45\textwidth}
  \begin{center}
    \includegraphics[width=\linewidth]{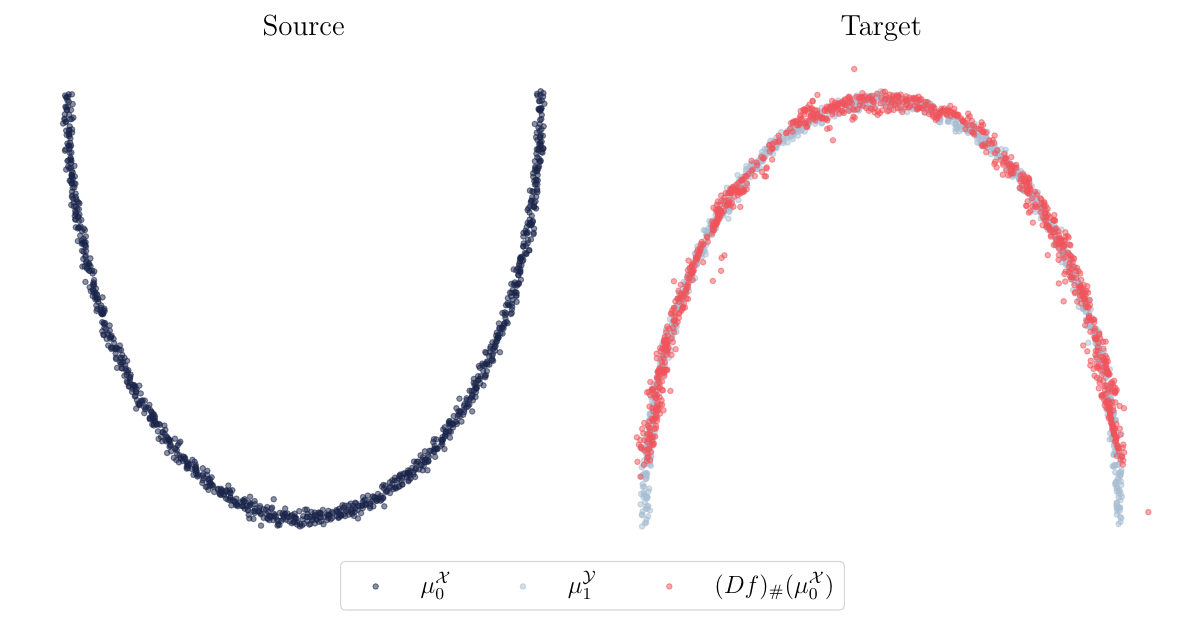}
  \end{center}
  \caption{\footnotesize $\Omega_i(x^i) = \frac{1}{2}(x^i)^2$}
  \label{fig:ddi_rout_2_Euclidean}
  \end{subfigure}
  \begin{subfigure}[t]{0.45\textwidth}
  \begin{center}
  \includegraphics[width=\linewidth]{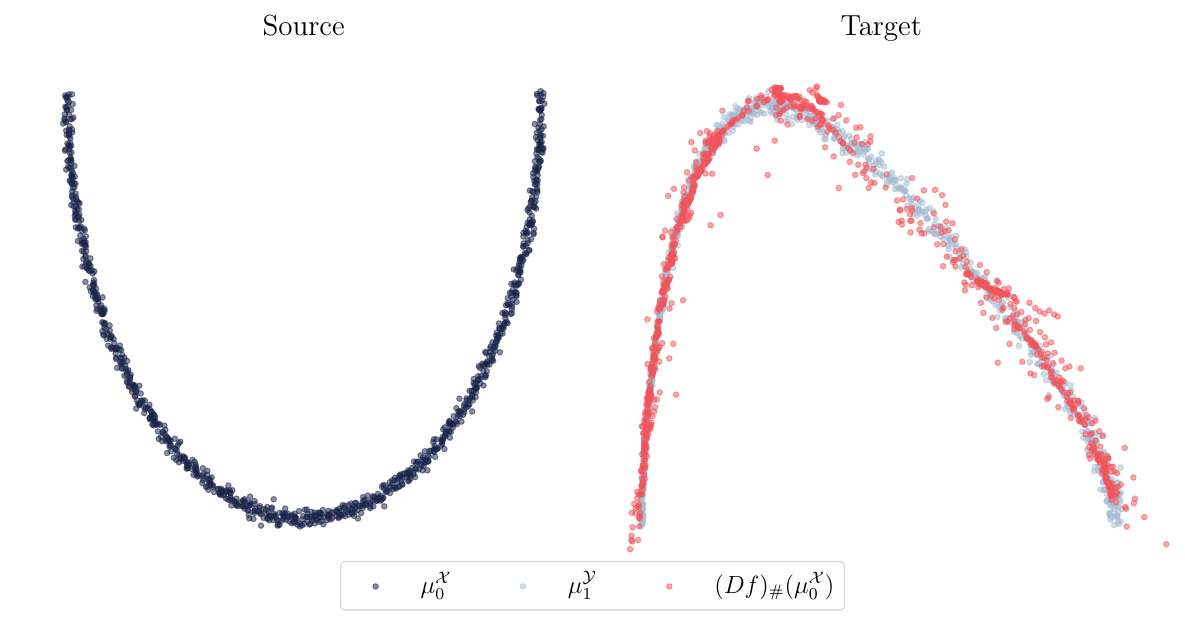}
  \end{center}
  \caption{\footnotesize $\Omega_i(x^i) = \exp(x^i)$}

  \label{fig:ddi_rout_2_ConjugateExtendedKL__a_1.0_}
  \end{subfigure}
  \begin{subfigure}[t]{0.45\textwidth}
  \begin{center}
    \includegraphics[width=\linewidth]{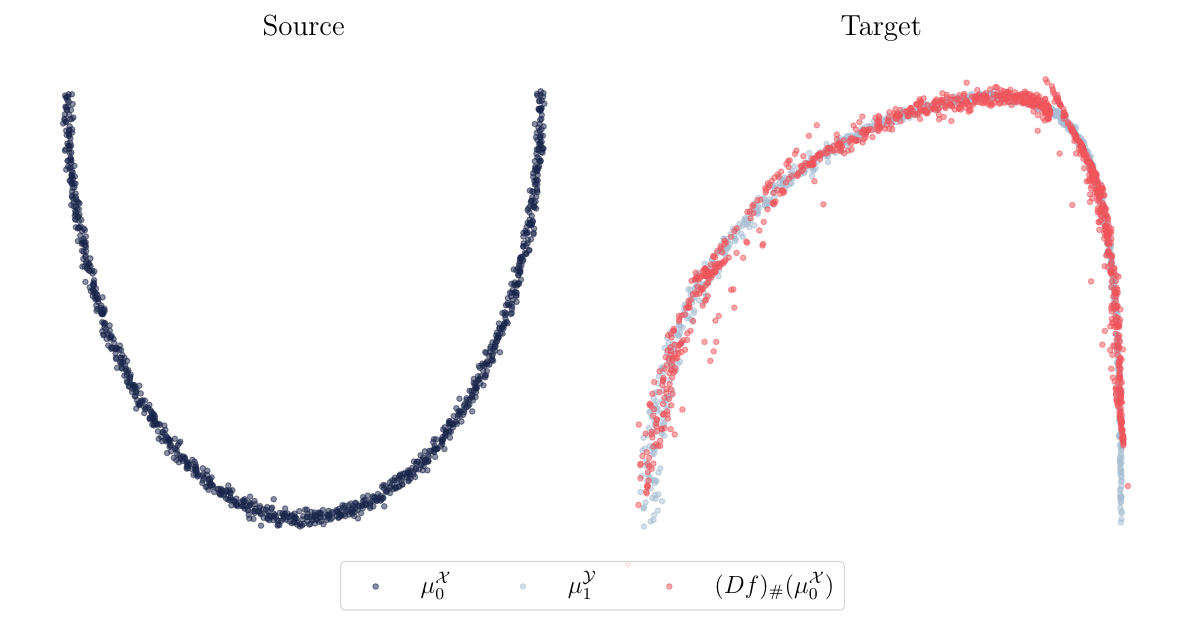}
  \end{center}
  \caption{\footnotesize $\Omega_i(x^i) = \exp(-x^i)$}
  \label{fig:ddi_rout_2_ConjugateExtendedKL__a_-1.0_}
  \end{subfigure}
  \begin{subfigure}[t]{0.45\textwidth}
  \begin{center}
    \includegraphics[width=\linewidth]{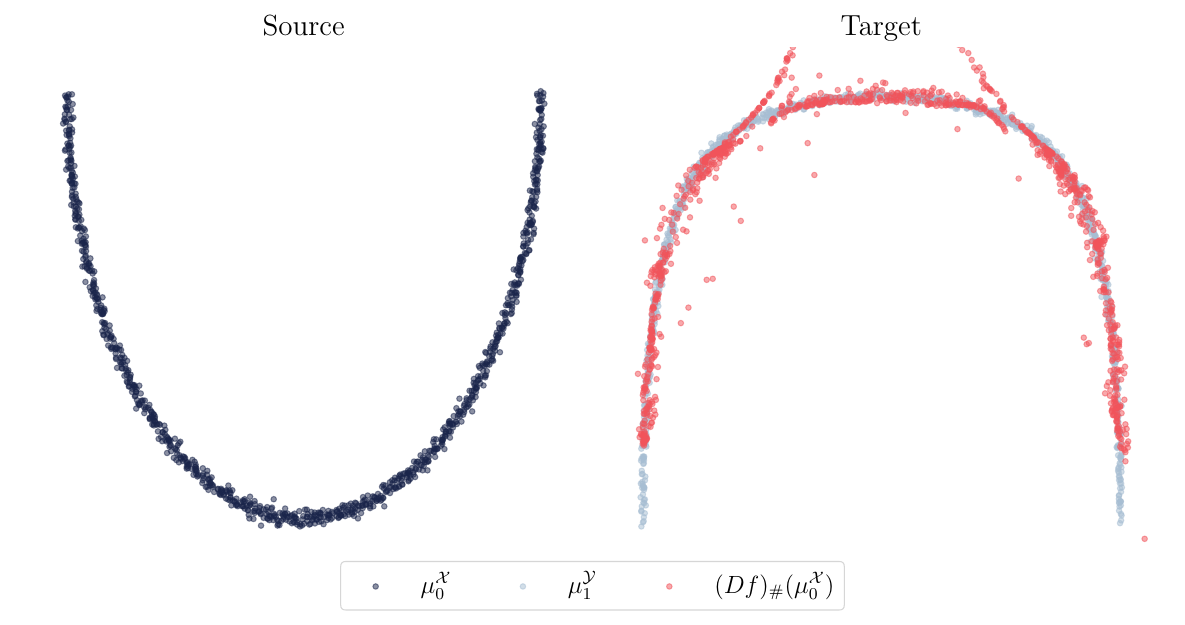}
  \end{center}
  \caption{\footnotesize $\Omega_i(x^i) = \frac{1}{2}\log\left( 1 + \exp(2x^i)
  \right) $}
  \label{fig:ddi_rout_2_ConjugateHNNTanh__beta_1.0_}
  \end{subfigure}
  \begin{subfigure}[t]{0.9\textwidth}
  \begin{center}
    \includegraphics[width=\linewidth]{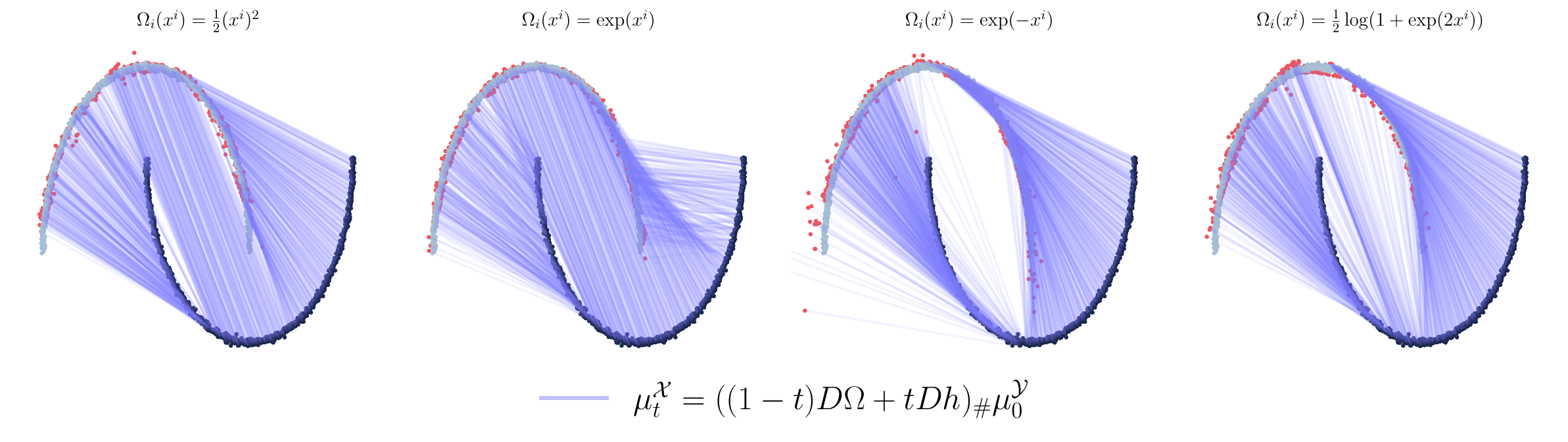}
  \end{center}
  \caption{\footnotesize Primal displacement interpolations.}
  \label{fig:rout_2_pdi}
  \end{subfigure}
  \begin{subfigure}[t]{0.9\textwidth}
  \begin{center}
    \includegraphics[width=\linewidth]{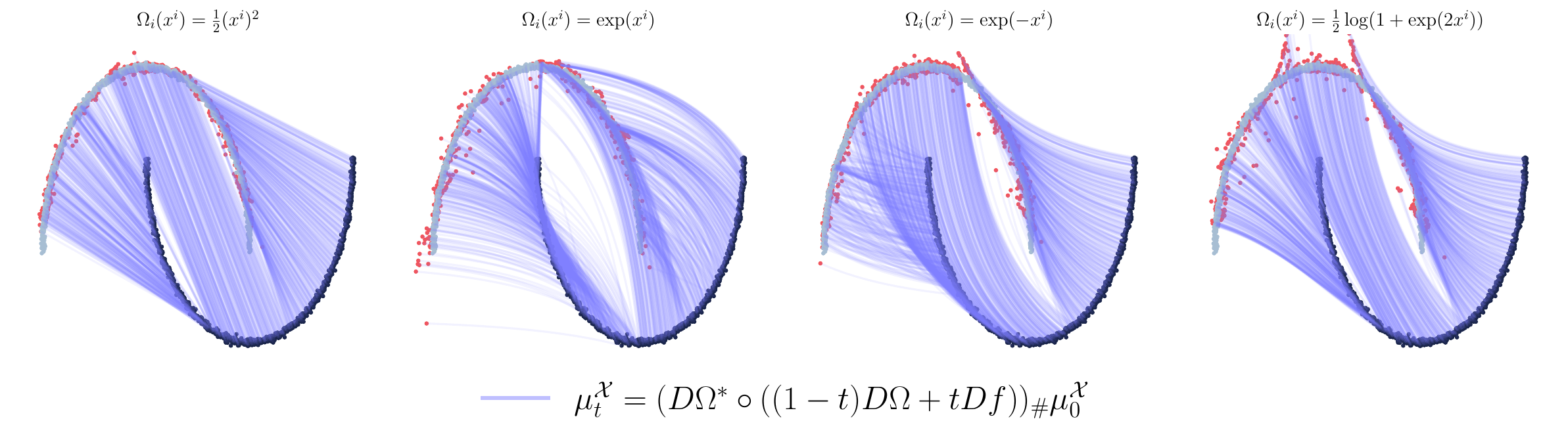}
  \end{center}
  \caption{\footnotesize Dual displacement interpolations.}
  \label{fig:rout_2_ddi}
  \end{subfigure}
  \caption{\footnotesize Bregman-Wasserstein OT in the context of Example \ref{eg:neural.OT}. 
Figures~\ref{fig:ddi_rout_2_Euclidean}-\ref{fig:ddi_rout_2_ConjugateHNNTanh__beta_1.0_}
plot the recovered Brenier map $Df$ from $\mu_0^{\X}$ to $\mu_1^{\Y}$ for
various choices of the separable Bregman generator
, while Figures~\ref{fig:rout_2_pdi} and~\ref{fig:rout_2_ddi}
trace (on the primal domain $\X$) the primal and dual displacement interpolations induced by the Brenier maps
$Dh$ and $Df$ for the corresponding $\Omega$.}%
  \label{fig:ddi_rout_2}
\end{figure}

Most existing work and applications focus on the Euclidean $2$-Wasserstein distance. However, it is not always the optimal choice. Proposition~\ref{prop:solving.BW.transport} enables the estimation of OT maps
corresponding to the Bregman-Wasserstein divergence using the same techniques as the $\mathscr{W}_2^2$. Crucially, by \eqref{eqn:BW.Euclidean.computation} it is straightforward to adapt any approach for estimating the $\mathscr{W}_2^2$ to our setting.
Given a Bregman generator $\Omega$, it is sufficient to solve the
$2$-Wasserstein problem between $\mu_0^{\X}$ and $\mu_1^{\Y}$ (or $\mu_0^{\Y}$
and $\mu_1^{\X}$) to recover the Bregman-Wasserstein OT map by composition
(see Equations~\eqref{eqn:primal.OT.map} and~\eqref{eqn:dual.OT.map}). The
primal and dual displacement interpolations can be computed similarly.

In this subsection, we empirically validate the use of the
Bregman-Wasserstein for transporting between synthetic datasets. For visualization purposes, we focus on two-dimensional settings, but the approach also generalizes to higher dimensions. Among the many possible approaches, we adopt the {\it ENOT solver}~\cite{buzun2024erf}, which demonstrated superior performance on several benchmark tasks. It relies on a form of expectile regularization to encourage optimality of the learned Kantorovich potentials and is {\it bidirectional} -- it jointly learns the
transport map and its inverse through dual potentials parameterized by neural networks.

\begin{example} \label{eg:neural.OT}
On the primal domain $\X := \mathbb{R}^2$, we consider two synthetic distributions $\mu_0^{\X}$ ($\smallsmile$-shaped) and $\mu_1^{\X}$ ($\smallfrown$-shaped) that are taken from ~\cite{rout2021gmw} and visualized in Figure~\ref{fig:ddi_rout_2}. We consider separable Bregman generators on $\X$ of the form $\Omega(x) \defeq \sum_{i}
\Omega_i(x^i)$, where each $\Omega_i$ is a univariate convex function. This corresponds to Example \ref{eg:Hopfield} where $\sigma_i = \Omega_i'$. In particular, $\Omega_i(x^i) = \frac{1}{2}(x^i)^2$ recovers the self-dual generator $\frac{1}{2}|x|^2$, which induces ($1/2$ times) the Euclidean $2$-Wasserstein distance (Example \ref{eg:reduces.to.Euclidean.W2}). For this and three other choices of the generator $\Omega_i$, we estimate the Brenier map $Df$ from $\mu_0^{\X}$ to $\mu_1^{\Y}$ using the ENOT solver (Figures~\ref{fig:ddi_rout_2_Euclidean}-\ref{fig:ddi_rout_2_ConjugateHNNTanh__beta_1.0_}). We also estimate the primal and dual displacement interpolations (Figures~\ref{fig:rout_2_pdi} and~\ref{fig:rout_2_ddi}).

Figure~\ref{fig:rout_2_pdi} illustrates the primal displacement interpolation
by estimating the Brenier map $Dh$ (see \eqref{eqn:primal.generalized.geodesic}) for various Bregman generators, and while
these correspond to straight lines in $\X$, the transport maps can evidently be diverse,
and distinct from the $\Wt^2$-transport. Furthermore, it is clear from
Figure~\ref{fig:rout_2_ddi} that the OT maps inducing the primal and dual
displacement interpolations are not identical or even similar, except for $\Wt^2$ (case A) which is self-dual. Additional details and examples can be found in Appendix \ref{sec:detail.neural.OT}.

\end{example}

\subsection{Bregman-Wasserstein barycenters} \label{sec:barycenters}
We extend Agueh and Carlier's notion of $\mathscr{W}_2^2$ barycenters \cite{AC11} to one which incorporates the Bregman-Wasserstein divergence and illustrate its advantages with an example. Then, we apply it to Bayesian statistics following the approach of \cite{backhoff-veraguas2022blw}.

Intuitively, a barycenter is a particular weighted average of a collection of elements. The simplest example is the barycenter of $K$ points in Euclidean space. Given $K$ elements $x_1,\dots,x_K \in \mathbb{R}^d$, and weights $\lambda_i \geq 0$ satisfying $\sum_{i = 1}^K \lambda_i = 1$, the {\it Euclidean barycenter} is the point
\begin{equation} \label{eqn:Euclidean.barycenter}
\bar{x}_{\lambda} = \argmin_{x \in \mathbb{R}^d} \sum_{i = 1}^K \lambda_i |x - x_i|^2.
\end{equation}

Let $\nu_1, \ldots, \nu_K \in \mathcal{P}_2(\mathbb{R}^d)$ be given and weights $\lambda_1, \ldots, \lambda_K$ be as above. Following \cite{AC11}, we say that a measure $\bar{\nu}_{\lambda}$ is a {\it Wasserstein barycenter} if
\begin{equation} \label{eqn:Wasserstein.barycenter}
\bar{\nu}_{\lambda} \in \argmin_{\nu \in \mathcal{P}_2(\mathbb{R}^d)} \sum_{i = 1}^K \lambda_i \mathscr{W}_2^2(\nu_i, \nu).
\end{equation}
Uniqueness of the Wasserstein barycenter, whilst true in reasonable circumstances, is no longer a given. The Wasserstein barycenter has been extended in many directions and applied in statistics and machine learning \cite{PC19}. %
Analogous constructions have been used for information-theoretic divergences. For example, Amari \cite{A07} characterized $\alpha$-integration as the barycenter with respect to the $\alpha$-divergence.

We formulate a natural extension of \eqref{eqn:Wasserstein.barycenter}. Let $M$ be a given Bregman manifold. Given $\nu_1, \ldots, \nu_K \in \mathcal{P}(\M)$ and weights $\lambda_1, \ldots, \lambda_K$, we say that $\bar{\nu} \in \mathcal{P}(\M)$ is a {\it Bregman-Wasserstein (right) barycenter} if
\begin{equation} \label{eq:breg-bary}
 \bar{\nu}_{\lambda} \in \argmin_{\nu \in \mathcal{P}(M)}\sum_{i = 1}^K \lambda_i\mathscr{B}(\nu_i,\nu),
\end{equation}
for given $\nu_i \in \mathcal{P}(M)$ and weights $\lambda_i$. Using the self-dual representation \eqref{eqn:BW.quadratic.case}, the {\it left barycenter} can be studied similarly. One may regard \eqref{eq:breg-bary} as model averaging under the Bregman-Wasserstein loss. In Section \ref{sec:Bayesian}, we extend this interpretation precisely, in a Bayesian context.

The Bregman-Wasserstein setting is particularly interesting, since the Bregman (right) barycenter is explicitly known to be the Euclidean weighted sum. More precisely, Banerjee et al.~\cite{BMDG05} showed that for $x_1, \ldots, x_K \in \X$, we have
\begin{equation} \label{eqn:Bregman.right.barycenter}
\argmin_{x \in \X} \sum_{i = 1}^K \lambda_i {\bf B}_{\Omega}(x_i, x) = \bar{x}_{\lambda} = \sum_{i = 1}^K \lambda_i x_i.
\end{equation}
In fact, this property characterizes the Bregman divergence; for precise statements, see \cite[Section VI]{BGW05}. Using this property, we obtain the following result.

\begin{theorem} [Bregman-Wasserstein barycenter] \label{thm:BW.barycenter.well.posed}
Let $\nu_1, \dots,\nu_K$ be compactly supported probability measures on $\M$, absolutely continuous with respect to the volume measure on $\M$.  Then there exists a right Bregman-Wasserstein barycenter.
\end{theorem}

Our approach is to use \eqref{eqn:Bregman.right.barycenter} to show that \eqref{eq:breg-bary} is equivalent to a suitable {\it multimarginal optimal transport problem} as in the Wasserstein barycenter \cite{AC11}. Then, we invoke the results of \cite{P11} to obtain existence, and a characterization of the Bregman-Wasserstein barycenter. The proof is given in Appendix \ref{sec:BW.barycenter.proof}, where we explain the multi-marginal formulation and provide a characterization of the barycenter. We also note that in applications the distributions are typically discrete, and hence compactly supported.

\begin{example} \label{eg:BW_barycenter}
Consider the Bregman manifold in Example \ref{eg:simplex.cgf} where the dual domain can be identified with the unit simplex and the dual generator is the negative Shannon entropy. For each column in Figure \ref{fig:bar_simplex}, we consider several synthetic marginal distributions $\nu_i$ that are arranged to vary roughly along a primal geodesic. Using the algorithm described in Appendix \ref{sec:details.barycenter} (in which a complete description of the setup can be found), we compute the left Bregman-Wasserstein barycenter, as well as the Wasserstein barycenter (with respect to the Euclidean geometry on the simplex). From the figure, it is clear that the Bregman-Wassrestein barycenter is more natural, as it concentrates on the primal geodesic, whereas the Wasserstein barycenter instead concentrates on the dual geodesic connecting the endpoint marginals (which have higher weights by design).
\end{example}

\begin{figure}[t!]
  \centering
  \includegraphics[width=0.99\textwidth]{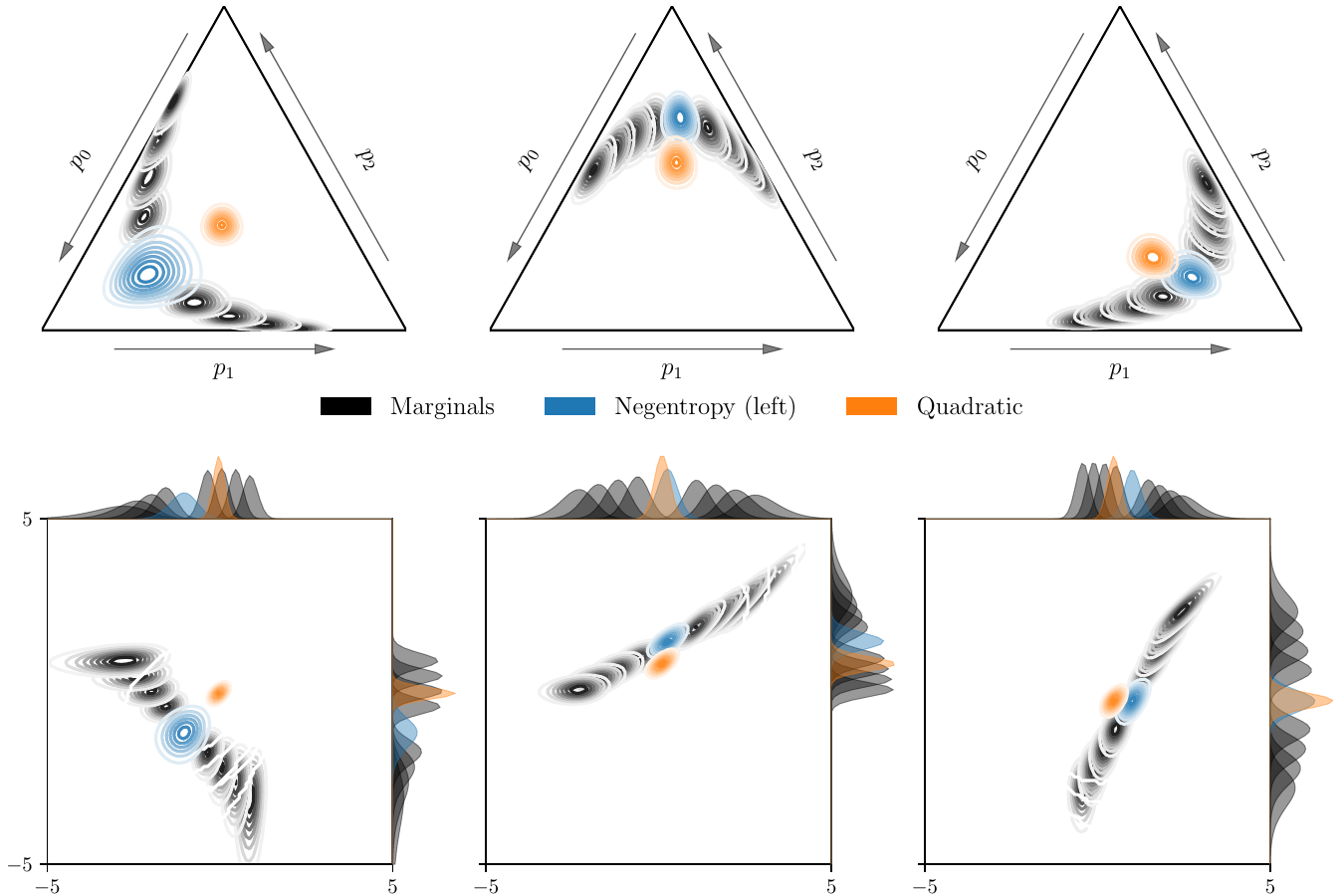}
  \caption{\footnotesize Comparing the (left) Bregman-Wasserstein barycenter (blue) and $2$-Wasserstein barycenter (orange) on the simplex, for the given marginals (gray). The bottom row illustrates the distributions on the primal domain $\X = \R^2$.}
\label{fig:bar_simplex}
\end{figure}

\subsubsection{Application in Bayesian statistics} \label{sec:Bayesian}
In \cite{backhoff-veraguas2022blw}, the Wasserstein barycenters \eqref{eqn:Wasserstein.barycenter} is analyzed through the lens of statistical decision theory (also see \cite{srivastava2018sbv}). Under such a perspective, it is possible to
interpret the barycenter as a {\it Bayes estimator}, i.e.~a distribution
minimizing the Bayes risk. We explain the general setup in \cite{backhoff-veraguas2022blw} and then adapt it to the Bregman-Wasserstein divergence.

Consider a manifold $\mathcal{M}$ (e.g.~$\mathbb{R}^d$) representing the state space of the data. In probabilistic modeling, a {\it model space} $\Ms\subseteq \sop(\M)$ can be thought of as a subset of the set of probability distributions. Under a Bayesian framework, given a dataset $\mathcal{D} = \left\{p_1, \ldots, p_n\right\}$, a {\it prior distribution} $P \in \sop(\sop(\M))$ induces a {\it posterior distribution} $P_{\mathcal{D}} \in \sop(\sop(\M))$ over the space of distributions. Given a {\it loss function} $\mathcal{L}: \sop(\M) \times \sop(\M) \to \mathbb{R}_{+}$ measuring the divergence between two models, the {\it Bayes risk} (expected loss under the posterior distribution) for a given model $\mu\in \Ms$, and the Bayes estimator are defined as
\begin{equation} \label{eqn:Bayes.estimator}
  \risk_{\mathcal{L}}(\mu\vert \D) \defeq \int_{\sop(\M)} \mathcal{L}(\mu, \nu) \dd P_{\D}(\nu) \quad \text{and} \quad
  \bar{\nu}_{\L} \defeq \argmin_{\nu \in \Ms} \risk_{\L}(\nu \vert \D).
\end{equation}
This general framework, which operates directly with distributions rather than parameters, can be used to recover classically known Bayesian
estimators, while also allowing for novel and interesting estimators by
considering more exotic loss functions. For instance, the maximum a posteriori (MAP) estimator is the Bayes estimator for the binary loss $\L(\mu,  \nu) = \indic{\mu \neq\nu}$, and the Bayes model average is optimal for both the mean-squared error and KL-divergence loss functions. Although classically used $f$-divergence loss functions are
popular due to their strong invariance properties, OT-based loss
functions capture the geometry of the underlying sample space $\M$ by computing a ``horizontal'' displacement cost (as opposed to a ``vertical'' interpolation that only depends on the density, and not the location of the support). Now, if the loss function $\L = \Wt^2$ is the squared $2$-Wasserstein distance, then $\bar{\nu}_{\L}$ is given by the usual (population) Wasaserstein barycenter.

While previous works~\cite{backhoff-veraguas2022blw,srivastava2018sbv}
mainly explored the symmetric Wasserstein loss,\footnote{Note that this requires specifying a metric on $\M$, such as the Euclidean metric with respect to a chosen coordinate system.} this may not always be the ideal choice. For example, in the context of an exponential family (Section \ref{sec:exp.family}), the Bregman divergence of the dual potential on the dual domain (which corresponds to the KL-divergence on the family, see \eqref{eqn:Bregman.as.KL}) is arguably more natural than the Euclidean distance. %
If we let $\L = \BWD$ be the Bregman-Wasserstein divergence corresponding to \eqref{eqn:Bregman.as.KL}, then the Bayes estimator is the Bregman-Wasserstein barycenter. Here we give an idealized example where we restrict ourselves to {\it Gaussian mixtures} that can be regarded as distributions on the Gaussian Bregman manifold. The Bregman-Wassestein barycenter is visibly and meaningfully different from other barycenters. The details can be found in Appendix  \ref{sec:details.barycenter}.

\begin{figure}[t!]
	\centering
	\hspace{-0.55cm}
	\includegraphics[scale = 0.5]{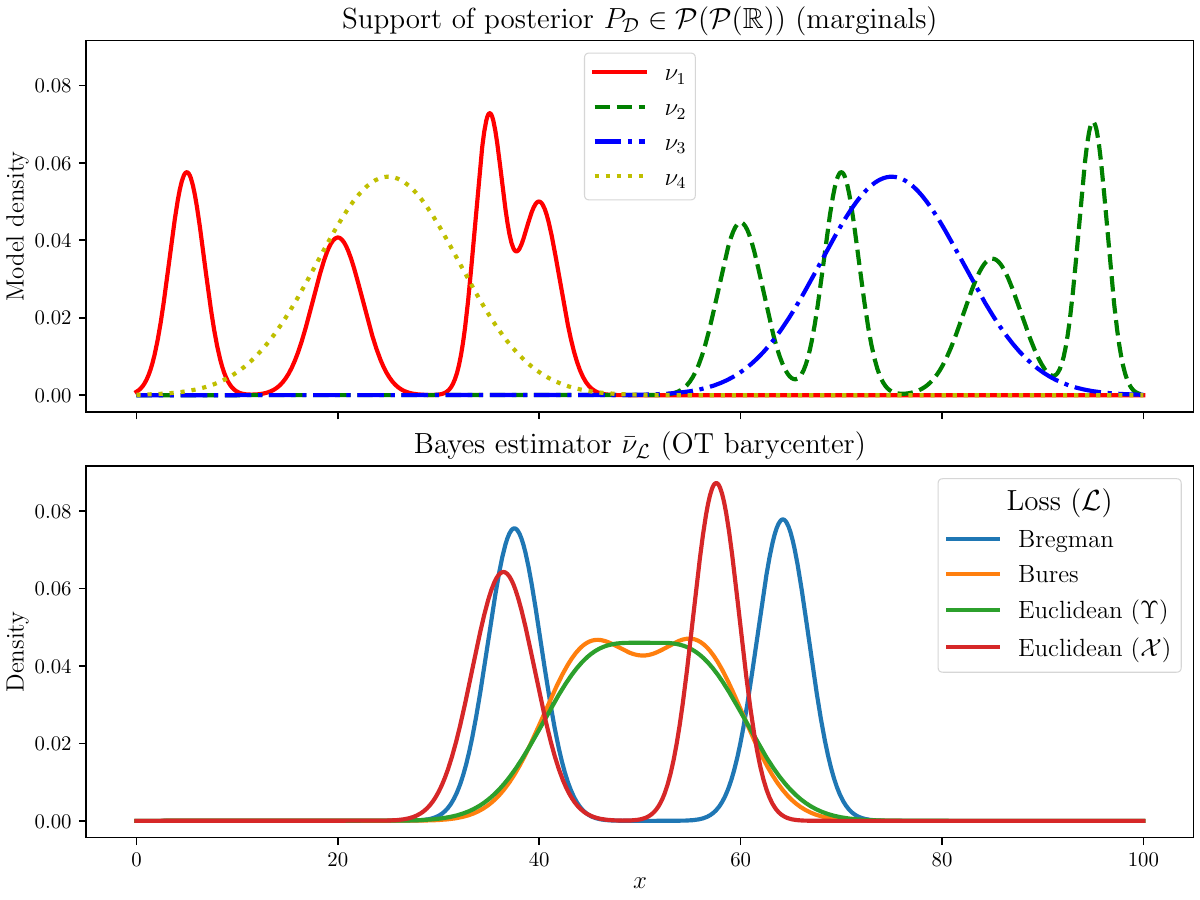}
    \vspace{-0.2cm}
	\caption{\footnotesize Illustration of the posterior (top row) and barycenters (second row) in the context of Example \ref{eg:GM.barycenters}.}
  \label{fig:GMM}
\end{figure}

\begin{example}[Gaussian mixtures] \label{eg:GM.barycenters}
Gaussian mixtures are ubiquitous in statistics and machine learning, since they lead to tractable formulations and are universal approximators of arbitrary distributions~\cite[Chapter~3]{goodfellow2016dl}. For concreteness, we consider Gaussian mixtures on $\mathbb{R}$. It is well known that the space of univariate Gaussian distributions is a $2$-dimensional exponential family, and we take this to be our Bregman manifold $\mathcal{M}$. A Gaussian mixture can then be identified by a probability distribution on $\mathcal{M}$. Given two Gaussian mixtures corresponding to $\mu, \nu \in \mathcal{P}(\mathcal{M})$, we take the Bregman loss function (on the subspace of Gaussian mixtures) to be $\mathscr{B}(\mu, \nu)$, as in equation~\eqref{eqn:BW.mixture.exp.family} of Example \ref{eg:BW.exp.family}.

Assume that given a dataset $\D$, the posterior distribution $P_{\D} \in
\mathcal{P}(\sop(\mathbb{R}))$ is the uniform distribution over four Gaussian
mixtures $\nu_i$, $i \in \left\{1, 2, 3, 4\right\}$, visualized in the top row
of Figure \ref{fig:GMM}. We consider each $\nu_i$ as a discrete distribution
over the space $\Upsilon := \mathbb{R} \times (0, \infty)$ of the mean and
variance parameters, which in turn can be identified with $\M$. We let the model
space $\Ms$ be Gaussian mixtures with two components. We equip $\M$ with four
costs: (i) the squared Euclidean distance on $\Upsilon$, (ii) the squared Euclidean distance on
the natural parameter space $\X$, (iii) the Bures-Wasserstein distance~\cite{BJL19}, and (iv) the KL-divergence~\cite[Section~9]{duchi2007dla}, which recovers the Bregman loss as
stated above, and in Equation~\eqref{eqn:BW.mixture.exp.family}. In the second
row of Figure \ref{fig:GMM}, we compare the Bregman-Wasserstein barycenter with
the OT barycenters induced by the Bures-Wasserstein and Euclidean distances. We
observe that the Bregman-Wasserstein barycenter is truly bimodal, while the
Bures/Euclidean Wasserstein barycenters are virtually unimodal. While using the
Euclidean cost in the natural parameter space also yields bi-modality, the two
components have unequal variance and tilt slightly towards the left, whereas the
Bregman is symmetric with similar variance for both components.
\end{example}

\subsection{Riemannian Wasserstein gradient flows} \label{sec:Wasserstein.gradient.flow}
In this subsection, we outline the application of the Bregman--Wasserstein
divergence to the approximation of Wasserstein gradient flows. Here we explain
the key results one can obtain; they are proved in \cite{RW24}.

Let $(\M,g)$ be a Riemannian manifold, which we will eventually take to be a
Bregman manifold. A function $\rho : [0,\infty) \times \M \rightarrow \mathbf{R}$ solves the
(Riemannian) Fokker--Planck equation with initial condition $\rho_0 \in \mathcal{P}(\M)$ and potential $\psi$ provided
\begin{equation}
\label{eq:fp}  \left\{
    \begin{array}{rll}
      \partial_t\rho&=  \Delta\rho + \divg(\rho \grad \psi)&\text{in}\quad(0,\infty) \times \M;\\
      \rho(0,\cdot) &= \rho_0(\cdot) &\text{on}\quad \M.
    \end{array}
  \right.
\end{equation}
The JKO scheme, introduced in the seminal work of Jordan, Kinderlehrer, and Otto
\cite{JKO98}, is an approximation scheme for the equation which reflects that
\eqref{eq:fp} is a gradient flow in the Wasserstein space. The scheme is as
follows, given $\rho_0$, recursively compute
\begin{equation}
  \label{eq:jko-def}\tag{JKO}
  \rho^\tau_{k+1} := \text{argmin}_{\rho \in \mathcal{P}(\M)} \int_{\M} \log \rho \, \dd \rho + \int_{\M}\psi \, d\rho
  + \frac{1}{2\tau}\mathscr{W}^2_2(\rho,\rho^\tau_{k}),
\end{equation}
and define the piecewise constant interpolation
\begin{equation}
  \label{eq:interpolant-def}
  \rho^\tau(t) := \rho^\tau_k \quad \text{for } t\in((k-1)\tau,k\tau].
\end{equation}
Jordan, Kinerlehrer, and Otto's key conclusion is that for each $t \in [0,\infty]$, $\rho^{\tau}(t)$ converges weakly in $L^{1}(\M)$ to a probability density $\rho(t,\cdot)$, which solves \eqref{eq:fp}. The significance of this result and its effect on optimal transport cannot be understated. However, the original work \cite{JKO98} took place on Euclidean space, and there the distance squared term defining the Wasserstein distance is the straightforward-to-compute Euclidean distance. When one extends Jordan, Kinderlehrer, and Otto's results to manifolds they are conceptually unchanged, practically, however, one faces the difficulty that explicit, tractable, expressions for the Riemannian geodesic distance may not exist depending on the Riemannian metric.

This is the case if we now consider $(\M,g)$ as a Bregman manifold.
A straightforward-to-compute expression for the Riemannian distance between two
points does not exist in general. Of course, we do have a simple expression for
the Bregman divergence. Thus, one may ask if solutions to the Fokker-Planck
equation may be approximated by a JKO-like scheme where the term $\mathscr{W}^2_2(\rho,\rho^\tau_{k})$ in \eqref{eq:jko-def} is replaced with the Bregman-Wasserstein divergence $\mathscr{B}(\rho,\rho^{\tau}_{k})$. That this is true is a related work \cite{RW24} by the second and third authors. More precisely, there we prove the following theorem.

\begin{theorem} \label{thm:modified-jko}
Let $\Omega$ be a regular Bregman generator on $\X$ and let $\M$ be the associated Bregman manifold with divergence ${\bf B}$. Assume either:
  \begin{enumerate}
  \item[(i)] $\tilde{\M}$ is some open and precompact subset of $\M$ with smooth boundary; or
  \item[(ii)] $\tilde{\M} = \M$ and $(\M,g)$, where $g_{ij}(x) = D_{ij}\Omega(x)$ under the Euclidean coordinates on $\X$, is a complete Riemannian manifold with Ricci curvature bounded below and there are constants $\lambda,\Lambda > 0$ such that $\lambda d^2(x_0,x_1) \leq B_{\Omega}(x_0,x_1) \leq \Lambda d^2(x_0,x_1)$ for $x_0, x_1 \in \X$.
  \end{enumerate}
  Finally, assume $\psi: \M \rightarrow \mathbf{R}_+$ is a smooth function satisfying $\Vert \nabla \psi \Vert \leq C(1+\psi)$. Define $\rho^{\tau}$ by \eqref{eq:interpolant-def} where $\rho_0$ is given and
\begin{equation}
  \label{eq:bw-jko-def}\tag{BW-JKO}
  \rho^\tau_{k+1} := \text{argmin}_{\rho \in \mathcal{P}(\M)} \int_{\M} \log \rho \, \dd \rho + \int_{\M}\psi \, d\rho
  + \frac{1}{\tau}\mathscr{B}(\rho,\rho^{\tau}_{k}).
\end{equation}
   Then there exists a measurable function $\rho: [0,\infty) \times \M \rightarrow\mathbf{R}_+$ such that for each $t \in [0,\infty)$ we have $\rho^\tau(t) \rightharpoonup \rho(t)$ weakly in $L^1(\M ; \dd \vol)$, and $\rho$ solves the Fokker-Planck equation \eqref{eq:fp}.
Moreover, in case (i) $\rho$ is the unique classical solution satisfying the Neumann boundary condition, and in case (ii) $\rho$ is the unique classical solution with finite drift (by which we mean $\int_\M \psi \dd \rho(t) < \infty$) and second moment.
\end{theorem}

\section{Conclusion and future directions} \label{sec:conclusion}
In this paper, we have shown that the Bregman-Wasserstein divergence induces a rich geometric structure on the space of probability measures, has numerous interesting properties, and can be readily implemented using existing OT algorithms. Additionally, we studied applications to Bregman-Wasserstein barycenters and modified JKO schemes for Wasserstein gradient flows. Our results suggest many directions for future research, some of which are highlighted below.
\begin{itemize}
\item Deeper study of the Bregman-Wasserstein geometry. In this paper, we follow the formal approach of Otto \cite{O01} and Lott \cite{L08} by restricting to sufficiently regular probability distributions and vector fields. An interesting direction of research is to develop our constructions in the generality of the work of Gigli \cite{Gigli12}, and study further properties of the parallel transports and curvature. We also believe that analogous theories can be developed when the cost function is a tractable divergence such as the {\it logarithmic divergence} studied in \cite{PW16, PW18, PW18b, W18, WZ22}.
\item Implementation of the Bregman-Wasserstein JKO scheme \eqref{eq:bw-jko-def}. A possible approach is to use {\it input-convex neural networks} \cite{amos2017input} which has been applied to implement \eqref{eq:jko-def} in the Euclidean case~\cite{mokrov2021large}. Other approaches, such as {\it normalizing flows} \cite{xu2023nfn}, may also be considered.  More generally, it is worthwhile to explore applications of the Bregman-Wasserstein divergence in optimization over spaces of probability distributions, especially generalizations of the mirror descent to the Wasserstein space \cite{bonet2024mirror, DKPS23}.
\item {\it Distributionally robust optimization (DRO)}. Consider a stochastic program of the form
\begin{equation} \label{eqn:stochastic.program}
\inf_{a \in A} \int_{\M} L(p, a) \dd \widehat{\mu}(p),
\end{equation}
where $\widehat{\mu} \in \mathcal{P}(\M)$ is a given reference distribution (typically estimated from data) on a state space $\M$, and $L$ is a loss function depending on the realization $p \in \M$ and the chosen action $a \in A$. The DRO ``recipe'' \cite{DSW24} {\it robustifies} \eqref{eqn:stochastic.program}  to the minimax problem
\begin{equation} \label{eqn:DRO}
\inf_{a \in A} \sup_{\mu \in \mathcal{N}(\widehat{\mu})} \int_{\M} L(p, a) \dd \mu(p),
\end{equation}
where $\mathcal{N}(\widehat{\mu})$ is a suitably defined {\it ambiguity set}, that is, a distributional neighborhood about $\widehat{\mu}$. The key idea is that $\mathcal{N}(\widehat{\mu})$ captures possible estimation errors or adversarial perturbations. In \eqref{eqn:DRO}, we aim to find an action which minimizes the worst-case expected loss. By considering \eqref{eqn:DRO} instead of \eqref{eqn:stochastic.program}, one obtains a more robust solution, which may possess desirable properties such as reduced sensitivity to outliers and robustness to adversarial perturbations. Existing OT-based ambiguity sets typically use  the Wasserstein distance~\cite{blanchet2019qdm,pflug2007aps}. Given the flexibility of the Bregman-Wasserstein divergence, it is natural to consider ambiguity sets of the form
   \begin{equation} \label{eqn:BW.ball}
     \mathcal{N}(\widehat{\mu}) = \left\{\mu \in \sop(\M)
       \ : \ \mathscr{B}(\mu,\widehat{\mu}) \leq \epsilon \right\}, \quad \epsilon > 0.
   \end{equation}
In fact, this was the original motivation of \cite{GHY17} to consider the Bregman-Wasserstein divergence. Also, see \cite{PVYY24} for a recent application of \eqref{eqn:BW.ball} in expected utility maximization. An interesting avenue is to validate the use of Bregman-Wasserstein ambiguity sets for applications such as portfolio optimization, adversarially robust classification and more. In particular, it may be advantageous to consider applications where penalizing    deviations asymmetrically is desirable.
\end{itemize}

\appendix
\section*{Appendix}
\section{Implementation details} \label{sec:implementation.detail}

\subsection{Neural OT} \label{sec:detail.neural.OT}
We present some additional results in Figure~\ref{fig:pdi_demo} for another
synthetic dataset in two dimensions. For both Figures~\ref{fig:ddi_rout_2}
and~\ref{fig:pdi_demo}, we show the Brenier mapping and displacement
interpolations between $1024$ randomly sampled points from $\mu_0$ and $\mu_1$
(in the specified coordinates).

When estimating the primal displacement interpolation, we run the ENOT solver to solve
$\Wt^2(\mu_0^\Y, \mu_1^\X)$, and estimate the neural Brenier maps $Dh$ and $Dh^* = (Dh)^{-1}$. Similarly,
for the dual displacement interpolation, we instead solve the $\Wt^2(\mu_0^\X, \mu_1^\Y)$ transport
to estimate the Brenier maps $Df$ and $Df^* = (Df)^{-1}$. The Brenier maps
in the primal displacement interpolation can be used to construct the OT map (and its inverse) for
$\mathscr{B}(\mu_1^\Y, \mu_0^\Y)$ as in Equation~\eqref{eqn:dual.OT.map}, while the dual displacement
interpolation is related to the
OT map for $\mathscr{B}(\mu_0^\X, \mu_1^\X$) in Equation~\eqref{eqn:primal.OT.map}.
Importantly, these OT maps are not inverses of each other, except in the special self-dual case
of the quadratic potential $\Omega = \Omega^* = |\cdot|^2/2$. In our implementation, we use
the same neural network configuration and hyperparameters as in~\cite[Table~5]{buzun2024erf}.

\begin{figure}[]
  \begin{subfigure}[t]{0.45\textwidth}
  \begin{center}
    \includegraphics[width=\linewidth]{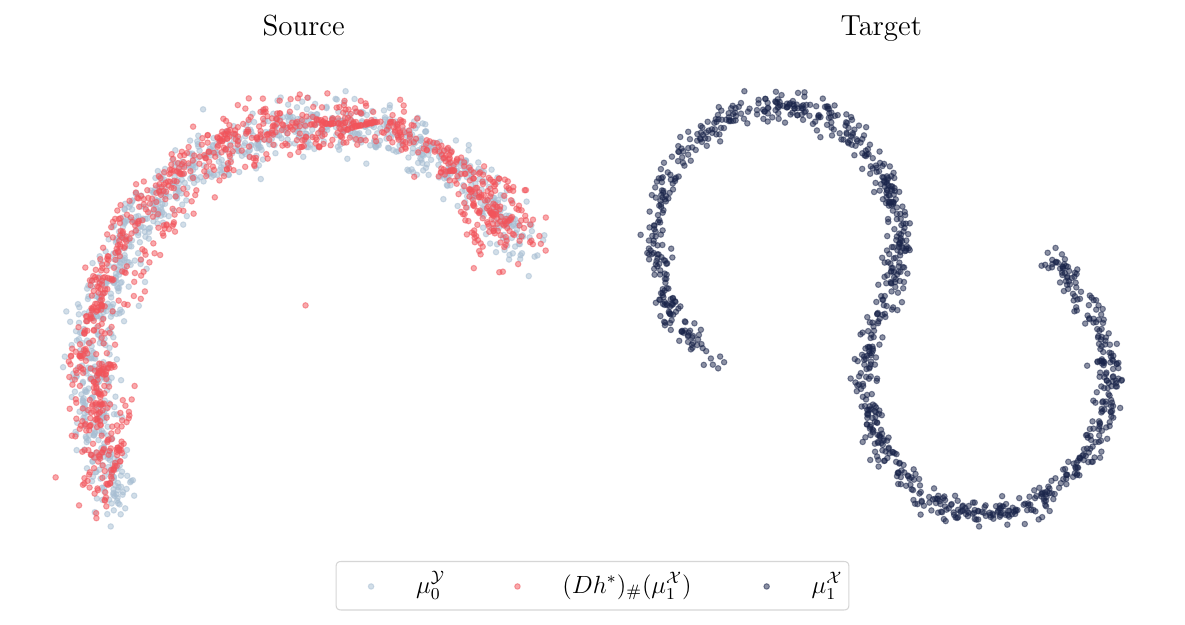}
  \end{center}
  \caption{$\Omega_i(x^i) = \frac{1}{2}(x^i)^2$}
  \label{fig:pdi_demo_Euclidean}
  \end{subfigure}
  \begin{subfigure}[t]{0.45\textwidth}
  \begin{center}
  \includegraphics[width=\linewidth]{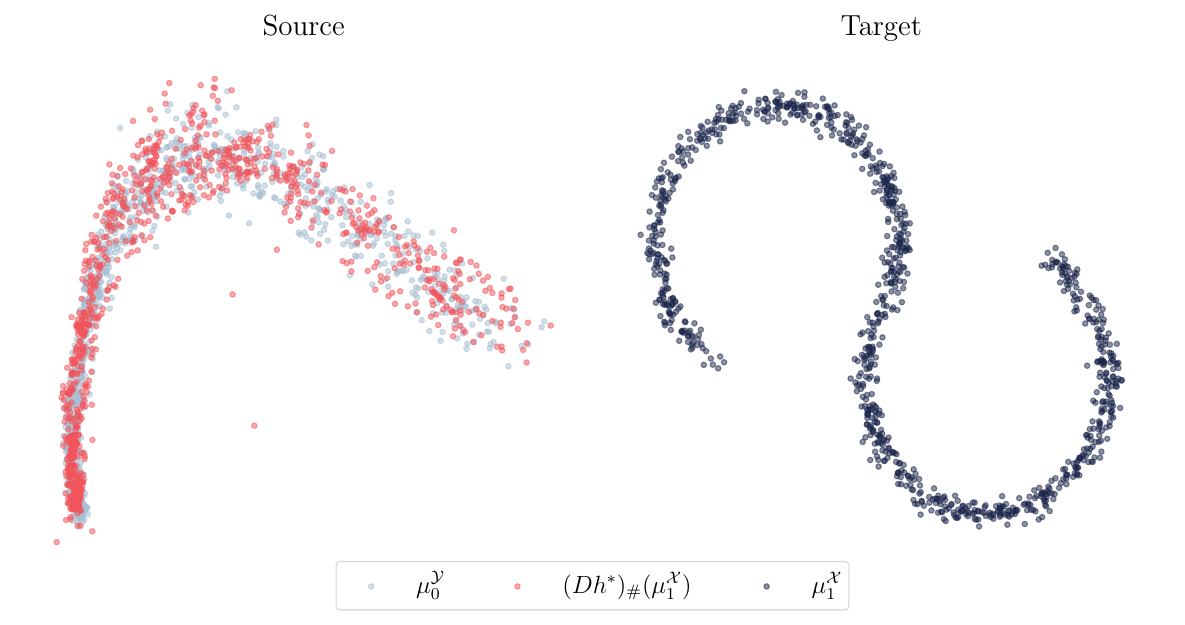}
  \end{center}
  \caption{$\Omega_i(x^i) = \exp(x^i)$}
  \label{fig:pdi_demo_ConjugateExtendedKL__a_1.0_}
  \end{subfigure}
  \begin{subfigure}[t]{0.45\textwidth}
  \begin{center}
    \includegraphics[width=\linewidth]{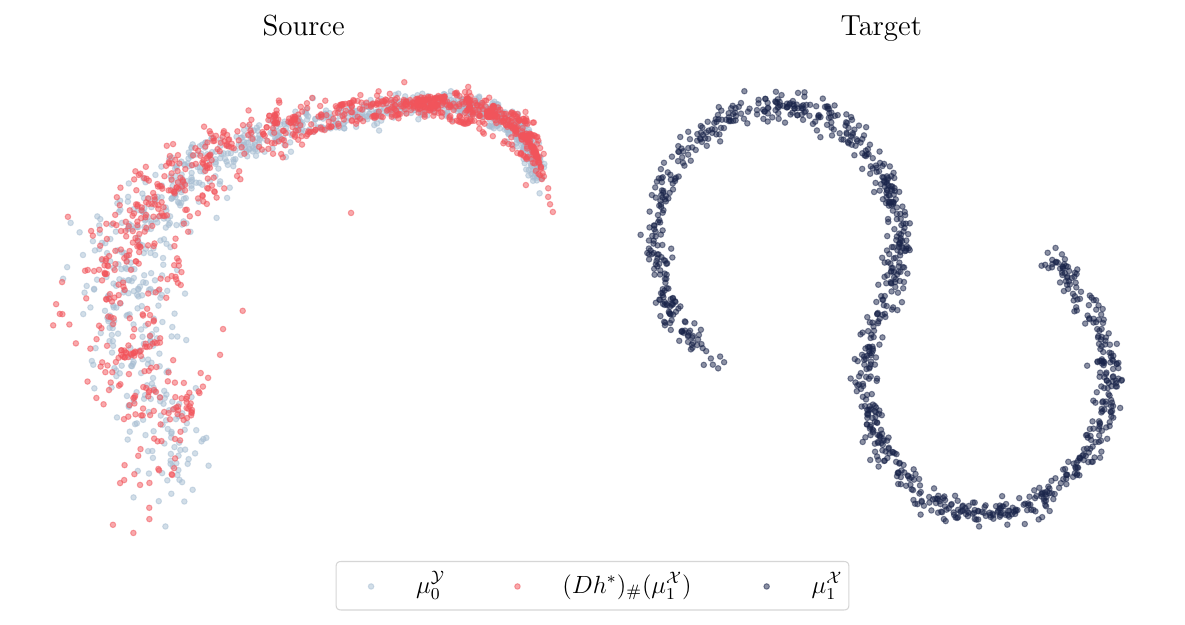}
  \end{center}
  \caption{$\Omega_i(x^i) = \exp(-x^i)$}
  \label{fig:pdi_demo_ConjugateExtendedKL__a_-1.0_}
  \end{subfigure}
  \begin{subfigure}[t]{0.45\textwidth}
  \begin{center}
    \includegraphics[width=\linewidth]{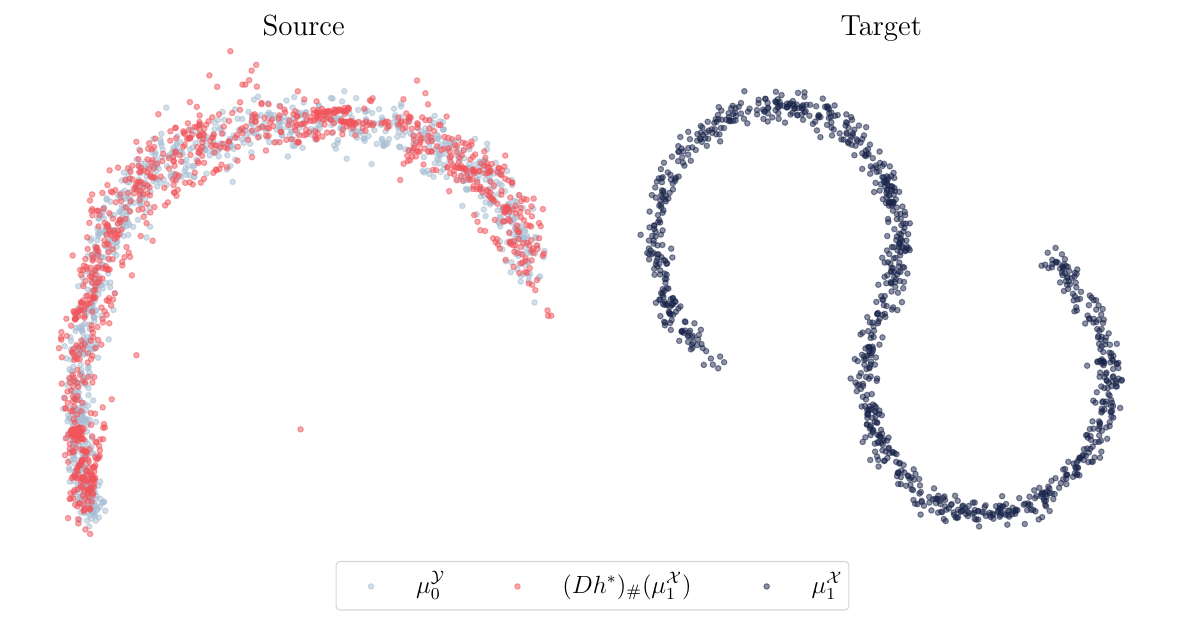}
  \end{center}
  \caption{$\Omega_i(x^i) = \frac{1}{2}\log\left( 1 + \exp(2x^i) \right) $}
  \label{fig:pdi_demo_ConjugateHNNTanh__beta_1.0_}
  \end{subfigure}
  \begin{subfigure}[t]{0.9\textwidth}
  \begin{center}
    \includegraphics[width=\linewidth]{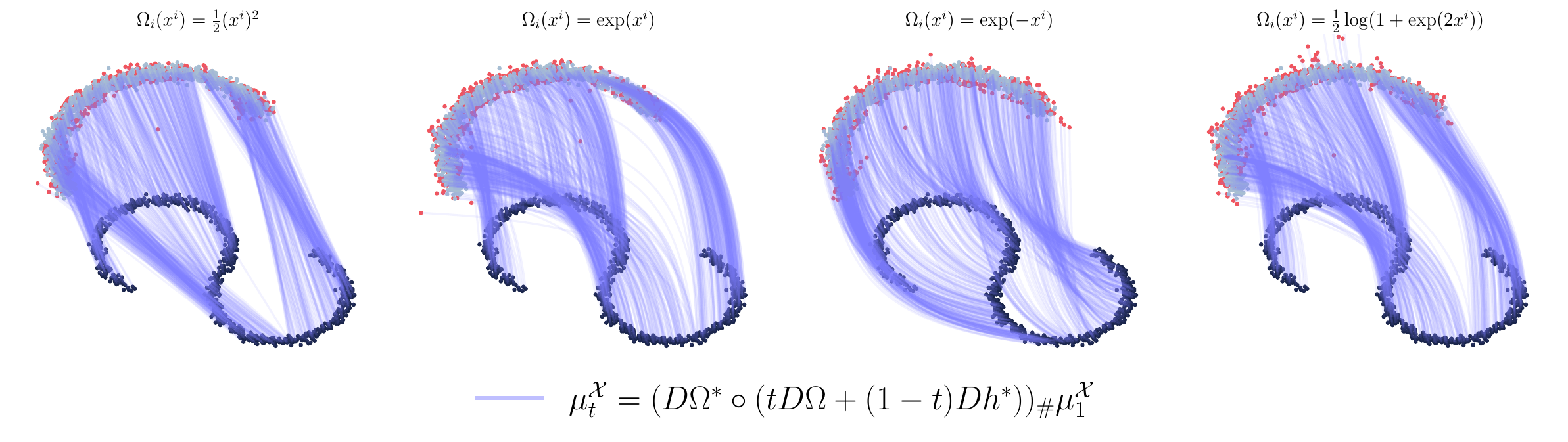}
  \end{center}
  \caption{Time-reversed primal diplacement interpolations.}
  \label{fig:pdi_demo_paths}
  \end{subfigure}
  \begin{subfigure}[t]{0.9\textwidth}
  \begin{center}
    \includegraphics[width=\linewidth]{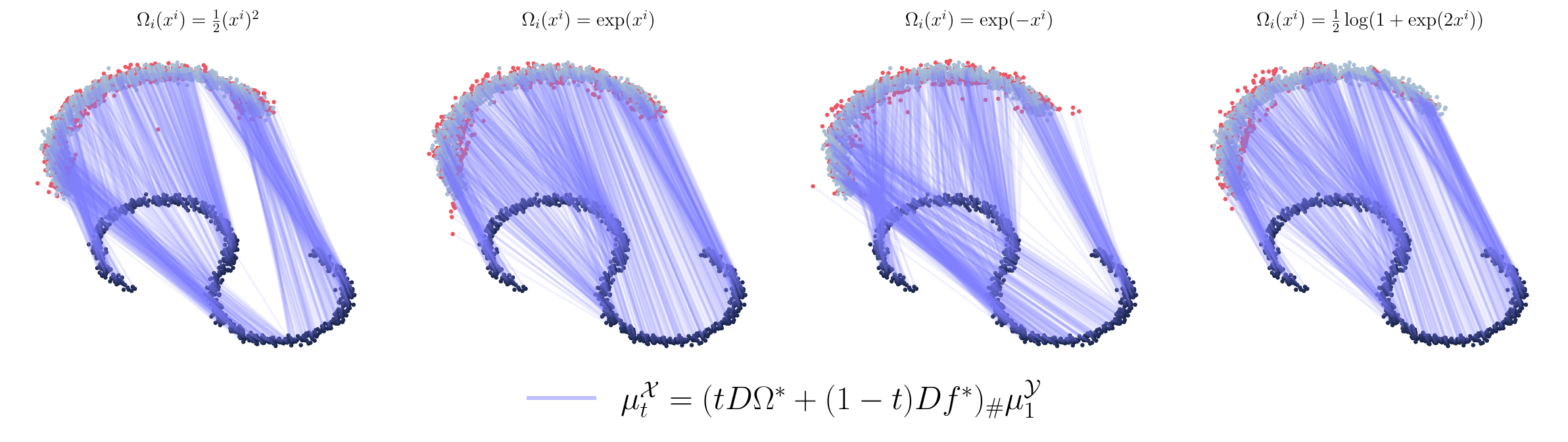}
  \end{center}
  \caption{Time-reversed dual diplacement interpolations.}
  \label{fig:ddi_demo_paths}
  \end{subfigure}
  \caption{\footnotesize Bregman-Wasserstein OT for the synthetic dataset considered
  in~\cite{UC23}.
Figures~\ref{fig:pdi_demo_Euclidean}-\ref{fig:pdi_demo_ConjugateHNNTanh__beta_1.0_}
plot the neural Brenier map $Dh^*$ between $\mu_1^\mathcal{X}$ and
$\mu_0^\mathcal{Y}$ for various choices of the separable Bregman generator
$\Omega(x) \defeq \sum_{i = 1}^d \Omega_i(x^i)$. 
Note that time reversal of a primal displacement interpolation is equivalent to
a dual displacement interpolation in the opposite direction, and vice-versa.}
  \label{fig:pdi_demo}
\end{figure}

\subsection{Bregman-Wasserstein barycenter} \label{sec:details.barycenter}
Given its well-posedness (established in Theorem \ref{thm:BW.barycenter.well.posed}), the Bregman-Wasserstein barycenter \eqref{eqn:Bregman.right.barycenter} can be readily computed by adapting existing methods. 
We use the {\it free-support method}~\cite{cuturi2014fcw} implemented in the~\texttt{OTT} Python package~\cite{cuturi2022ott} using the~\texttt{JAX} library~\cite{bradbury2018jct}. This method incorporates entropically regularized OT (for discrete measures) that can be solved efficiently by the Sinkhorn algorithm and can be parallelized to exploit GPU hardware. The algorithm proceeds by repeatedly iterating over two steps and works for any cost function $c$:
\begin{itemize}
  \item Computing the entropic transport plan using the Sinkhorn algorithm for the
    current estimate of the barycenter and each marginal (estimating $K$ such
    plans for $K$ marginals). The entropic OT map can then be approximated as
    the {\it barycentric projection}~\cite{deb2021reo} (with respect to cost $c$) of the entropic OT plan.
  \item The next barycenter estimate is obtained by applying the {\it barycentric map}
    to the transported particles obtained in the previous step. 
\end{itemize}
The barycentric projection relies on the barycentric maps for the underlying cost. From \eqref{eqn:Bregman.right.barycenter}, for the Bregman cost the left and right barycentric maps are respectively given by
    \begin{equation*}
    \begin{split}
        \bar{x}_{\lambda}^{\leftarrow} &\defeq \argmin_{x}  \sum_{i=1}^K \lambda_i \mathbf{B}_\Omega(x, x_i) = D\Omega^*\left(\sum_{i=1}^K \lambda_i D\Omega(x_i)\right) \\
        \bar{x}_{\lambda}^{\rightarrow} &\defeq \argmin_{x} \sum_{i=1}^K \lambda_i \mathbf{B}_\Omega(x_i, x) = \sum_{i=1}^K \lambda_i x_i
    \end{split}
    \end{equation*}
We use an entropic penalty of $10^{-2}$ for estimating the barycenter, and run
the Sinkhorn algorithm for a maximum of $50$ iterations.

For
Figure~\ref{fig:bar_simplex}, we compute the barycenter of $8$ marginal
distributions, each approximated using $250$ particles. Each column of the
figure corresponds to a different set of marginals that are constructed along
a primal geodesic (curved on the simplex) between the endpoints. More precisely, each marginal
is taken to be a Dirichlet distribution centered at evenly spaced points along
the geodesic, i.e. $\mu_i = \mathrm{Dir}\left( 100 p_i \right)$, where each $p_i$ is a point on the open unit simplex in $\mathbb{R}^3$. The barycenter is initialized by sampling from the flat Dirichlet with unit concentration. Additionally, the weights $\lambda = (\lambda_1, \ldots, \lambda_8)$ are skewed to give more weight to the marginals at the endpoints of the geodesic. Specifically, the weights are
constructed by normalizing the last row of Pascal's triangle:
 \begin{equation*}
   \lambda = \begin{bmatrix} \frac{35}{128} & \frac{21}{128} &\frac{7}{128}
   & \frac{1}{128}  & \frac{1}{128} & \frac{7}{128} & \frac{21}{128}
   & \frac{35}{128} \end{bmatrix}.
 \end{equation*}
 The contours for the resulting barycenter in each case is approximated by
 a Dirichlet distribution (on the simplex) centered at the mean of the particles, and concentration determined by the variance.

 For Figure~\ref{fig:GMM}, we assume that the posterior distribution $P_{\D}\in
 \sop(\sop(\mathbb{R}))$ is represented by an empirical measure supported uniformly on four ``models''
 $\left\{\nu_i\right\}_{1 \leq i \leq 4}$, and each (Gaussian mixture) model
 $\mu$ is composed of its Gaussian ``particles'' (components) that are identified by their mean and variance
 \begin{equation*}
   \upsilon \defeq (m, \sigma^2) \in \Upsilon \defeq \mathbb{R} \times (0, \infty).
 \end{equation*}
 Under this identification, we consider marginals given by
 \begin{equation*}
   \begin{split}
     \nu_1 &= \frac{1}{4}\left( \delta_{(5, 3)} + \delta_{(20, 6)}
     + \delta_{(35, 2)} + \delta_{(40, 4)} \right),\\
     \nu_2 &= \frac{1}{4}\left( \delta_{(85, 8)} + \delta_{(60, 5)}
       + \delta_{(70, 3)} + \delta_{(95, 2)} \right),\\
       \nu_3 &= \delta_{(75, 50)},\\
       \nu_4 &= \delta_{(25, 50)}.
   \end{split}
 \end{equation*}
 The barycenter is constrained to be a Gaussian mixture of two components (this defines the model class $\mathbbm{M}$), which is initialized by sampling ${m_i \sim \mathrm{Uniform}(0, 100)}$ and ${\sigma_i^2 \sim \mathrm{Uniform}(0.5, 5)}$.%

 We consider OT barycenters under four cost functions on $\Upsilon$. The ``Euclidean'' (in green) refers to the usual squared Euclidean cost in the mean-variance half-space, i.e.
 \begin{equation}
 \begin{split}
 c(\upsilon_1, \upsilon_2) &= (m_1 - m_2)^2 + (\sigma_1^2 - \sigma_2^2)^2,\quad (\text{on } \Upsilon)\\
 \end{split}
 \end{equation}
 The ``Bures'' (in orange) refers to the {\it Bures-Wasserstein distance} (see e.g.~\cite{BJL19}) between univariate Gaussians, and is given by
 \begin{equation*}
   c(\upsilon_1, \upsilon_2) = (m_1 - m_2)^2 + (\sigma_1 - \sigma_2)^2.
 \end{equation*}
 This is the same as the Euclidean cost in the mean-standard deviation space.
 The Bregman cost is induced by the log-partition function of the
 univariate Gaussian family defined on the natural parameter space $\Theta$ (see for instance \cite[Section 2.2.1]{A16}):
 \begin{equation*}
   \Omega: \Theta \defeq \mathbb{R} \times (-\infty, 0) \to \mathbb{R}, \quad \theta \mapsto
   \frac{\theta_1^2}{4\theta_2^2} - \frac{1}{2}\log\left( -2\theta_2 \right).
 \end{equation*}
 The natural parameters are related to the mean and variance by
 \begin{equation*}
   \theta(\upsilon) = \theta(m, \sigma^2) := \left( \frac{m}{\sigma^2},
   -\frac{1}{2\sigma^2}\right).
 \end{equation*}
 Therefore, by the ``Bregman'' cost, we really mean the cost function
 \begin{equation*}
   c(\upsilon_1, \upsilon_2) = \mathbf{B}_\Omega(\theta(\upsilon_1),
   \theta(\upsilon_2)) = \mathbf{H}(\mathcal{N}(m_2,\sigma_2^2) || \mathcal{N}(m_1,\sigma_1^2)).
 \end{equation*}
 Similarly, the Euclidean cost on the natural parameter space $\Theta$ is
 \begin{equation*}
 \widetilde{c}(\upsilon_1, \upsilon_2) = |\theta(\upsilon_1) - \theta(\upsilon_2)|^2
 \end{equation*}
 We compute and plot its right barycenter in Figure~\ref{fig:GMM}, which
 intuitively corresponds to averaging particles in the natural parameter space.

\section{Existence and characterization of Bregman-Wasserstein barycenter} \label{sec:BW.barycenter.proof}
In this appendix we provide the proof of Theorem \ref{thm:BW.barycenter.well.posed} by adapting and extending the results and techniques of Agueh and Carlier \cite{AC11} and Pass \cite{P11}.

We begin by recalling the Monge and Kantorovich formulations of multi-marginal optimal transport.  Given Polish spaces $\M_1, \dots, \M_K$, a Borel cost function $c:\M_1 \times\dots \times \M_K \rightarrow \mathbb{R}$, and measures $\nu_i \in \mathcal{P}(\M_i)$, the Monge problem is to minimize
\[
\mathcal{C}(G_2,\dots,G_K) = \int_{\M_1}c(x_1,G_2(x_1),\dots,G_K(x_1)) \dd \nu_1(x_1),
\]
over mappings $(G_2, \ldots, G_K): \M_1 \rightarrow \M_2 \times \cdots \times \M_K$ satisfying $(G_i)_{\#}\nu_1 = \nu_i$. The Kantorovich formulation is to minimize over plans: Let $\Pi(\nu_1,\dots,\nu_K)$ denote the subset of $\mathcal{P}(\M_1 \times \dots \times \M_K)$ whose $i^{\text{th}}$ marginal is $\nu_i$.
The Kantorovich problem  is to find $\pi \in \Pi(\nu_1 \times \dots \times \nu_K)$ minimizing
\[
\mathcal{K}(\pi) = \int_{\M_1 \times \dots \times \M_K}c(x_1,\dots,x_K) \dd \pi(x_1,\dots,x_K).
\]

By \eqref{eqn:BW.divergence.coordinates}, we may work under the primal representation (on $\X$) which is more notationally convenient because of \eqref{eqn:Bregman.right.barycenter}. In the following we fix positive weights $\lambda_1, \ldots, \lambda_K$ with $\sum_{i = 1}^K \lambda_i = 1$. Let $T = T_{\lambda}: \X^K \rightarrow \X$ be the barycentric map $T(x) = \sum_{i = 1}^K \lambda_i x_i$, where $x = (x_1, \ldots, x_K) \in \X^K$.

\begin{proposition} [Multi-marginal formulation]\label{prop:multimarginal-equiv}
Let $\nu_1,\dots,\nu_K \in \mathcal{P}(\M)$ be absolutely continuous with respect to the Riemannian volume $\dd \vol$.\footnote{This is the same as saying that $\nu_i^\X$ (resp.~$\nu_i^Y$) is absolutely continuous with respect to the Lebesgue measure on $\X$ (resp.~$Y$).} If $\gamma \in \Pi(\nu_1, \ldots, \nu_K)$ is a minimizer of the multimarginal Kantorovich problem
 \begin{equation} \label{eqn:BW.multimarginal}
 \inf_{\pi \in \Pi(\nu_1, \ldots, \nu_K) } \left\{ \mathcal{K}(\pi) := \int_{{\X}^K} \sum_{i=1}^K \lambda_i {\bf B}_{\Omega}(x_i,T(x)) \dd \pi^{\X}(x) \right\} < +\infty,
 \end{equation}
then $\overline{\nu} = (\iota^{-1} \circ T)_{\#} \gamma^{\X} \in \mathcal{P}(\M)$ is a Bregman-Wasserstein barycenter, i.e., a solution to \eqref{eq:breg-bary}. Moreover, the problems \eqref{eq:breg-bary} and \eqref{eqn:BW.multimarginal} attain the same minimizing value.
\end{proposition}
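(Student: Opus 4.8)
The plan is to adapt the Agueh--Carlier equivalence \cite{AC11}, with the key ``engine'' being the fact \eqref{eqn:Bregman.right.barycenter} that the Euclidean barycentric map $T(x) = \sum_i \lambda_i x_i$ is the right Bregman barycenter. In fact a direct expansion using Definition \ref{def:Bregman.divergence} gives the exact identity $\sum_i \lambda_i \mathbf{B}_\Omega(x_i, z) = \sum_i \lambda_i \mathbf{B}_\Omega(x_i, T(x)) + \mathbf{B}_\Omega(T(x), z)$, which specializes to \eqref{eqn:Bregman.right.barycenter} and supplies the pointwise inequality $\sum_i \lambda_i \mathbf{B}_\Omega(x_i, z) \geq \sum_i \lambda_i \mathbf{B}_\Omega(x_i, T(x))$ for every $z \in X$. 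Throughout I work in the primal representation via \eqref{eqn:BW.divergence.coordinates}, writing $\mathscr{B}(\nu_i, \nu) = \mathscr{B}_\Omega(\nu_i^X, \nu^X)$, and I prove the two inequalities between the barycenter value $V_{\mathrm{bar}} := \inf_{\nu \in \mathcal{P}(M)} \sum_i \lambda_i \mathscr{B}(\nu_i, \nu)$ and the multimarginal value $V_{\mathrm{mm}} := \mathcal{K}(\gamma^X)$.

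For the bound $V_{\mathrm{bar}} \geq V_{\mathrm{mm}}$, I fix an arbitrary competitor $\nu \in \mathcal{P}(M)$ and, for each $i$, choose an optimal coupling $\pi_i \in \Pi(\nu_i^X, \nu^X)$ for $\mathscr{B}_\Omega(\nu_i^X, \nu^X)$. Since all $\pi_i$ share the common second marginal $\nu^X$, disintegrating each along $\nu^X$ and taking the conditionally independent glueing produces a measure $\Lambda$ on $X^m \times X$ whose $(i, m+1)$-marginal is $\pi_i$ for each $i$; let $\pi^X \in \Pi(\nu_1^X, \ldots, \nu_m^X)$ be its marginal on the first $m$ factors. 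Writing $z$ for the last coordinate and applying the pointwise inequality above,
\[
\sum_{i=1}^m \lambda_i \mathscr{B}(\nu_i, \nu) = \int_{X^m \times X} \sum_{i=1}^m \lambda_i \mathbf{B}_\Omega(x_i, z) \, \dd \Lambda \geq \int_{X^m} \sum_{i=1}^m \lambda_i \mathbf{B}_\Omega(x_i, T(x)) \, \dd \pi^X \geq \mathcal{K}(\gamma^X),
\]
the last step because $\gamma^X$ minimizes $\mathcal{K}$ over $\Pi(\nu_1^X, \ldots, \nu_m^X)$. Taking the infimum over $\nu$ yields $V_{\mathrm{bar}} \geq V_{\mathrm{mm}}$.

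For the reverse bound, I show the constructed measure $\tilde{\nu} = (\iota^{-1} \circ T)_{\#} \gamma^X$ is a competitor achieving at most $V_{\mathrm{mm}}$. Setting $\tilde{\nu}^X = T_{\#} \gamma^X$, the map $x \mapsto (x_i, T(x))$ pushes $\gamma^X$ forward to a coupling $\sigma_i \in \Pi(\nu_i^X, \tilde{\nu}^X)$, valid because the $i$-th marginal of $\gamma^X$ is $\nu_i^X$ and $T_{\#}\gamma^X = \tilde{\nu}^X$. As $\sigma_i$ is in general suboptimal,
\[
\mathscr{B}(\nu_i, \tilde{\nu}) = \mathscr{B}_\Omega(\nu_i^X, \tilde{\nu}^X) \leq \int_{X^m} \mathbf{B}_\Omega(x_i, T(x)) \, \dd \gamma^X,
\]
so that $\sum_i \lambda_i \mathscr{B}(\nu_i, \tilde{\nu}) \leq \mathcal{K}(\gamma^X) = V_{\mathrm{mm}}$. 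Combined with the first bound this forces $\sum_i \lambda_i \mathscr{B}(\nu_i, \tilde{\nu}) = V_{\mathrm{bar}} = V_{\mathrm{mm}}$, so $\tilde{\nu}$ solves \eqref{eq:breg-bary} and the two problems have equal value.

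The chain of inequalities is essentially forced once \eqref{eqn:Bregman.right.barycenter} is in hand, so the genuine obstacle is the measure-theoretic bookkeeping rather than the geometry: justifying the existence of the optimal couplings $\pi_i$ and of a multimarginal minimizer (both standard since $\mathbf{B}_\Omega \geq 0$ is continuous and the spaces $X^m \times X$ are Polish), invoking the disintegration/glueing construction, and, most importantly, checking that all the integrals are finite so that no inequality degenerates into $\infty \geq \infty$. It is here that the absolute continuity of the $\nu_i$, together with the integrability implicit in working with $\mathscr{B}$, is used: via the self-dual representation \eqref{eqn:Bregman.self.dual} it guarantees that the terms $\int \Omega \, \dd \nu_i^X$ and the dual entropic terms are finite, hence that the cost $\sum_i \lambda_i \mathbf{B}_\Omega(x_i, T(x))$ is $\gamma^X$-integrable and both values are finite.
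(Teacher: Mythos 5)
Your proof is correct and takes essentially the same route as the paper's: both adapt the Agueh--Carlier argument, combining the disintegration/glueing construction along the common marginal with the pointwise Bregman barycenter inequality for one direction, and the suboptimal coupling $x \mapsto (x_i, T(x))$ pushed forward by $\gamma^X$ for the other. The only cosmetic difference is that you choose the couplings $\pi_i$ optimally so that the first step is an equality, whereas the paper works with arbitrary couplings in $\Pi(\nu_i^X,\tilde{\nu}^X)$ and takes the infimum at the end.
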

\begin{proof}
 We follow closely the argument in \cite[Proposition 4.2]{AC11}. We first establish
 \begin{equation}  \label{eq:bary-est1}
\inf_{\tilde{\nu} \in \mathcal{P}(\M)}\sum_{i=1}^K\lambda_i\mathscr{B}(\nu_i, \tilde{\nu}) \geq \mathcal{K}(\gamma),
 \end{equation}
  after which we show
  \begin{equation}
    \label{eq:bary-est2}
    \sum_{i=1}^K \lambda_i \mathscr{B}(\nu_i, \overline{\nu}) \leq \mathcal{K}(\gamma).
 \end{equation}
 Combining \eqref{eq:bary-est1} and \eqref{eq:bary-est2} yields the result.

 For \eqref{eq:bary-est1} we begin with some notation for disintegrating families of measures. Let $\tilde{\nu}^\X \in \mathcal{P}(\X)$ and $\tilde{\nu}^\X_i \in \Pi(\nu^\X_i,\tilde{\nu}^\X)$ be arbitrary. By the disintegration theorem \cite[Theorem 1.4.10]{FG21} each $\tilde{\nu}^\X_i$ may be decomposed as
 \[ \tilde{\nu}^\X_i = \tilde{\nu}_i^{\X,y} \otimes \tilde{\nu}^\X.\]
 Here the $\tilde{\nu}_i^{\X,y}$ are uniquely defined for $\tilde{\nu}^\X$ almost every $y$ by the requirement
 \begin{align*}
   \int_{\X^2}f(x_i,y) \dd \tilde{\nu}^\X_i(x_i,y) = \int_{\X} \int_{\X} f(x_i,y) \dd \tilde{\nu}_i^{\X,y}(x_i) \dd \tilde{\nu}^\X(y), \quad  f \in C_b(\X^2).
 \end{align*}
 Here $C_b(\X)$ denotes the space of bounded continuous real functions on $\X$. Thus to each choice of $\tilde{\nu}^\X_1 \in \Pi(\nu^\X_1,\tilde{\nu}^\X),\dots,\tilde{\nu}^\X_K \in \Pi(\nu^\X_K,\tilde{\nu}^\X)$ we may define
 \begin{align*}
   \nu^\X &:= \tilde{\nu}_1^{\X,y} \otimes \dots \otimes \tilde{\nu}_K^{\X,y} \otimes \tilde{\nu}^{\X},
 \end{align*}
 and then subsequently define $\sigma^\X$ as the canonical projection of $\nu^\X$ onto $\X^K$. That is, $\sigma^\X$ is defined by the requirement
 \[ \int_{\X^K}f(x_1,\dots,x_K) \dd \sigma^\X(x) = \int_{\X^{K+1}}f(x_1,\dots,x_K) \dd \nu^\X(x,y), \quad  f \in C_b(\X^K),\]
 and heuristically may be regarded as ``integrating out'' the $y$-variable. Note that $x \in \X^K$.

 To obtain \eqref{eq:bary-est1} have arbitrary $\tilde{\nu}^\X \in \mathcal{P}(\X)$ and $\tilde{\nu}_i^\X \in \Pi(\nu_i^\X,\tilde{\nu}^\X)$. The definition of the Bregman-Wasserstein divergence implies
 \begin{align*}
   \sum_{i=1}^K \lambda_i \mathscr{B}(\nu_i,\tilde{\nu}) &\geq \sum_{i=1}^K \lambda_i \int_{\X^2}{\bf B}_{\Omega}(x_i,y) \dd \tilde{\nu}^\X_i(x_i,y) \\
   &= \int_{\X^{K+1}}\sum_{i=1}^{K}\lambda_i{\bf B}_{\Omega}(x_i,y) \dd \nu^\X (x,y).
 \end{align*}
 Using that the Euclidean barycenter is also the Bregman barycenter and then integrating out the $y$ variable we obtain
 \begin{align*}
   \sum_{i=1}^K \lambda_i \mathscr{B}(\nu_i,\tilde{\nu}) &\geq \int_{\X^{m+1}}\sum_{i=1}^{K}\lambda_i{\bf B}_{\Omega}(x_i,T(x)) \dd \nu^\X (x,y)\\
   &=  \int_{\X^{K}}\sum_{i=1}^{K}\lambda_i{\bf B}_{\Omega}(x_i,T(x)) \dd \sigma^\X(x).
 \end{align*}
 Finally since $\sigma^\X$ has, by construction, $i^\text{th}$ marginal $\nu^\X_i$ and $\gamma$ is the assumed minimizer of the multimarginal Kantorovich problem we have
 \[  \sum_{i=1}^K \lambda_i \mathscr{B}(\nu_i,\tilde{\nu})  \geq  \int_{\X^{K}}\sum_{i=1}^{K}\lambda_i{\bf B}_{\Omega}(x_i,T(x)) \dd \gamma^\X = \mathcal{K}(\gamma). \]
 Taking the infimum over $\tilde{\nu}$ yields \eqref{eq:bary-est1}.

 For \eqref{eq:bary-est2} first note since $\gamma^\X$ has $i^{\text{th}}$-marginal $\nu^\X_i$ and $\overline{\nu}^\X$ is defined as $T_{\#}\gamma^\X$, its clear that $\gamma_i^\X := (\pi_i,T)_{\#}\gamma^\X$ is in $\Pi(\nu^\X_i,\overline{\nu}^\X)$. Thus by definition of the Bregman-Wasserstein divergence
 \begin{align*}
   \mathscr{B}(\nu_i,\overline{\nu}) &= \inf_{\pi \in \Pi(\nu_i^\X,\nu^\X)} \int_{\X^2} {\bf B}_{\Omega}(x,y) \dd \pi(x,y)\\
                      &\leq \int_{\X^2} {\bf B}_{\Omega}(x,y) \dd \gamma^\X_i(x,y)= \int_{\X^K} {\bf B}_{\Omega}(x_i,T(x)) \dd \gamma^\X.
 \end{align*}
Summation weighted by $\lambda_i$ implies
 \[ \sum_{i=1}^K \lambda_i \mathscr{B}(\nu_i,\overline{\nu}) \leq \int_{\X^K} \sum_{i=1}^K \lambda_i\mathbf{B}_{\Omega}(x_i,T(x)) \dd \gamma^\X,\]
 which is \eqref{eq:bary-est2}, thereby completing the proof.
 \end{proof}

Now we obtain the existence and a characterization of the Bregman-Wasserstein barycenter as a consequence of Pass's work on multi-marginal optimal transportation \cite[Theorem 3.1]{P11}\footnote{We require compactness in Theorem \ref{thm:BW.barycenter.well.posed} since this is assumed in Pass's work.}.

 \begin{proof}[Proof of Theorem \ref{thm:BW.barycenter.well.posed}]
   Proposition \ref{prop:multimarginal-equiv} asserts that if $\gamma$ is a solution to the Kantorovich multimarginal problem then $\nu := T_{\#}\gamma$ is a Bregman-Wasserstein barycenter. In this proof we just check the conditions to apply \cite[Theorem 3.1]{P11} are met. That theorem asserts that the solution to the Monge problem is of the form $(x_1,G_2(x_1),\dots,G_K(x_1))$ for suitable functions $G_i,$m and existence is guaranteed by \cite[Theorem 2.2]{P11}.

   Making direct reference to the notation of \cite[Theorem 3.1]{P11} we show the conditions (I)-(V) are met. Condition (V), that the first marginal does not charge small sets, is met by assumption. For the remaining conditions we work in the primal coordinates and let
   \[ c(x_1,\dots,x_K):=\sum_{i=1}^K\lambda_i{\bf B}_{\Omega}(x_i,T(x)),\]
   denote the cost function associated to the multimarginal problem. Next we verify (II), the property of being $(1,K)$ twisted, that is that
   \[x_K \mapsto D_{x_1}c(x_1,x_2,\dots,x_K),\]
   is injective for fixed $x_1\dots,x_{K-1}$.
  A direct calculation yields
   \begin{align}
\label{eq:first-deriv}   D_{x_k}c(x_1,\dots,x_K) = \lambda_k\big[D\Omega(x_k)-D\Omega(T(x))\big]
   \end{align}
   Thus the injectivity of $D_{x_1}c(x_1,\dots,x_K)$ in $x_K$ follows from the associated injectivity of $D\Omega$ and $x_K \mapsto T(x_1,\dots,x_{K-1},x_K)$.

   The next condition to verify is (I), $(1,m)$-non-degeneracy, that is that the quantity $D_{x_1 x_K}c(x_1,\dots,x_K)$ is a nondegenerate as a linear operator (equivalently $\det D_{x_1 x_K}c \neq 0$ ). Continuing from \eqref{eq:first-deriv} compute
   \begin{equation}
     \label{eq:second-deriv}
     D_{x_k x_l}c(x_1,\dots,x_K) =  \lambda_{k}(\delta_{kl}D^2\Omega(x_k) - \lambda_l D^2\Omega(T(x))),
   \end{equation}
   where $\delta_{kl}$ is the Kronecker delta. That \eqref{eq:second-deriv} has nonzero determinant for $k=1, l=K$ follows from $k \neq l$, $\lambda_k,\lambda_l \neq 0$ and the positive definiteness of $D^2\Omega$.
   Next we verify Pass's condition (III), that a particular tensor, $T$, is
   nonnegative definite. The tensor is defined as follows: Fix arbitrary $x
   = (x_1,\dots,x_K) \in \X^K$, let $i \in \{2,3,\dots,K-1\}$, and let $x^{(i)}
   = (x_1^{(i)},\ldots,x_{i-1}^{(i)},x_i,x_{i+1}^{(i)},\ldots,x_K^{(i)}) \in \X^K$ be another given point, arbitrary except for the fact that its $i^{\text{th}}$ component agrees with that of $x$. The tensor is
   \begin{align*}
     T:= - \sum_{j=2}^{K-1}&\sum_{\substack{i=2\\i\neq j}}^{K-1}D^2_{x_ix_j}c + \sum_{i,j=2}^{K-1}(D^2_{x_ix_K}c)(D^2_{x_1x_K}c)^{-1}(D^2_{x_1x_j}c)(x)\\
     &+\sum_{i=2}^{K-1} \left[\text{Hess}_{x_i}c(x^{(i)})-\text{Hess}_{x_i}c(x)\right].
   \end{align*}
   Now, the bounds of the summation forbid repeated indices, so \eqref{eq:second-deriv} implies
   \[ (D^2_{x_ix_K}c)(D^2_{x_1x_K}c)^{-1}(D^2_{x_1x_j}c)(x)  = -\lambda_{i}\lambda_jD^2\Omega (T(x)),\]
   which for $i\neq j$ equals $D^2_{x_ix_j}c(x)$ and cancels with the first summation. Thus
      \begin{align*}
     T =  -\sum_{i=2}^{K-1}\lambda_{i}^2D^2\Omega(T(x))+\sum_{i=2}^{m-1} \left[\text{Hess}_{x_i}c(x^{(i)})-\text{Hess}_{x_i}c(x)\right].
      \end{align*}
      From \eqref{eq:second-deriv}, we have
      \[ \text{Hess}_{x_i}c(x) = \lambda_{i}D^2\Omega(x_i) - \lambda_{i}^2D^2\Omega(T(x)).\]
      From which we compute that
      \[ T = -\sum_{i=2}^{K-2}\lambda_i^2D^2\Omega(T(x^{(i)})), \]
      which by the uniform convexity of $\Omega$ is non-negative definite: exactly Pass's requirement.

      The final condition to check, by which we conclude the proof of the theorem, is the (Euclidean) convexity of the set
      \begin{equation}
        \label{eq:pass-conv-condn}
        \{(x_2,\dots,x_{K-1}) ;\text{ there is } x_K \text{ such that } c_{x_1}(x_1,x_2,\dots,x_K) = p_1 \}.
       \end{equation}
       By again employing \eqref{eq:first-deriv} we see the convexity of this set follows from the convexity of $\X$. Indeed, \eqref{eq:pass-conv-condn} can be expressed as the set of $(x_2,\dots,x_{K-1})$ for which there is $x_K$ such that
       \[ \sum_{i=2}^{K}\lambda_ix_i = v := D\Omega^*(D\Omega(x_1) - p_1/\lambda_1) - \lambda_1x_1,\]
       and one sees this set is convex by arguing straight from the definition of convexity.
 \end{proof}

\section{Additional proofs regarding the generalized dualistic structure} \label{sec:appendix.proofs}

\begin{proof}[Proof of Theorem \ref{thm:BW.conjugate.connections}]
Fix $\rho$ and  $\mathcal{V}_u,\mathcal{V}_v,\mathcal{V}_w \in T_{\rho}\mathcal{P}^\infty(\M)$ where without loss of generality $\Pi_\rho(u) = u$, and similarly for $v,w$.
  (i) This is similar to \cite[Lemma 2 and (4.7)]{L08}; for completeness we include the details. We begin with a technical claim: that tangent vectors are characterized by their action when integrated against smooth test functions. Namely, for each smooth compactly supported $\phi$ on $M$ define $F_\phi:\mathcal{P}^\infty(M) \rightarrow \mathbb{R}$  by $F_\phi(\rho \dd \vol) := \int_{\M} \phi \rho \dd \vol$.
We prove if $\mathcal{V}_v,\mathcal{V}_w \in T_{\rho}\mathcal{P}^\infty(M)$ satisfy
  \[ \mathcal{V}_vF_\phi(\rho \dd \vol) = \mathcal{V}_w F_\phi(\rho \dd \vol),\]
  for all such $\phi$, then $\mathcal{V}_v = \mathcal{V}_w$. Indeed, by assumption $v-w$ is in the $L^2(\rho \dd \vol)$ closure of the space of gradients of smooth functions. Thus there is a sequence of such $\phi_n$ with $\grad \phi_n \rightarrow v - w$ in $L^2(\rho \dd \vol)$. Then by assumption
  \[ 0 = (\mathcal{V}_v-\mathcal{V}_w)F_{\phi_n}(\rho \dd \vol) = \int\phi_n \divg[\rho(w-v)] \dd \vol = \int \langle \grad \phi_n , v-w \rangle \rho \dd \vol.\]
  Taking $n \rightarrow \infty$ yields $v = w$ as elements of $L^2(\rho \dd \vol)$.

  Now we prove the torsion free identity. Fix an arbitrary $\phi$ and first note
  \begin{align}
\nonumber    &(\nnabla_{\mathcal{V}_v}\mathcal{V}_w )F_\phi(\rho \dd \vol) - (\nnabla_{\mathcal{V}_w}\mathcal{V}_v) F_\phi(\rho \dd \vol) \\
&= \nonumber -\int_{\M} \phi \divg(\rho \Pi_\rho(\nabla_v w - \nabla_w v)) \dd \vol\\
\label{eq:torsion-1}    &= \int_{\M} \langle \grad \phi, [v,w]\rangle \rho \dd \vol,
  \end{align}
  where the first equality holds by definition and linearity, and the second by the torsion free property of $\nabla$. On the other hand we compute
  \begin{align*}
    \mathcal{V}_v\mathcal{V}_w F_\phi(\rho \dd \vol) &= \frac{\dd}{\dd\epsilon_1}\Big\vert_{\epsilon_1 = 0}\frac{\dd}{\dd\epsilon_2}\Big\vert_{\epsilon_2 = 0}\int_{\M} \phi \times [\rho - \epsilon_1 \divg( \rho v) - \\
                           &\quad\quad \epsilon_2 \divg\big[\big(\rho - \epsilon_1 \divg(\rho v)\big)w\big]\dd \vol\\
                           &= -\frac{\dd}{\dd\epsilon_1}\Big\vert_{\epsilon_1 = 0} \int_{\M} \phi \times \divg\big[\big(\rho - \epsilon_1 \divg(\rho v)\big)w\big]\dd \vol\\
                           & = \frac{\dd}{\dd\epsilon_1}\Big\vert_{\epsilon_1 = 0} \int_{\M} \langle \grad \phi , \rho - \epsilon_1 \divg\big(\rho v\big)w \rangle \dd \vol \\
                           &= -\int_{\M} \langle \grad \phi ,w\rangle \divg(\rho v) \dd \vol \\
    &= \int_{\M} [ \langle\overline{\nabla}_v \grad \phi , w \rangle + \langle \grad \phi , \overline{\nabla}_v w\rangle ] \rho \dd \vol.
  \end{align*}
  Deriving the corresponding identity for $ \mathcal{V}_w\mathcal{V}_v F_\phi(\rho \dd \vol)$ and subtracting (recall symmetry of the Hessian of $\phi$) we obtain
  \[ \llbracket \mathcal{V}_v,\mathcal{V}_w \rrbracket F_\phi(\rho \dd \vol) = \int_{\M} \langle \grad \phi, \overline{\nabla}_v w - \overline{\nabla}_wv \rangle \rho \dd \vol.  \]
    Since the Levi-Civita connection $\overline{\nabla}$ is torsion free, comparison with \eqref{eq:torsion-1} completes the proof of part (i) for the primal connection. The proof is unchanged with the primal connection replaced by the dual connection.

    (ii) We argue from the definition of $\mathcal{V}_u\mathfrak{g}(\mathcal{V}_v,\mathcal{V}_w)$. Indeed,
    \begin{align*}
      \mathcal{V}_u[\mathfrak{g}(\mathcal{V}_v,\mathcal{V}_w)(\rho)] &= \frac{\dd}{\dd\epsilon}\Big\vert_{\epsilon = 0}\mathfrak{g}(\mathcal{V}_v,\mathcal{V}_w)[\rho - \epsilon \divg(\rho u)]\\
                                  &= \frac{\dd}{\dd\epsilon}\Big\vert_{\epsilon = 0}\int_{\M} \langle v, w \rangle (\rho  - \epsilon \divg(\rho u))\dd \vol\\
                                  &= -\int_{\M} \langle v , w \rangle \divg (\rho u)  \dd \vol.
    \end{align*}
    Now using the defining property of the Riemannian divergence and the corresponding conjugacy property on the base space we obtain
    \begin{align*}
          \mathcal{V}_u[\mathfrak{g}(\mathcal{V}_v,\mathcal{V}_w)(\rho \dd \vol)] &= \int_{\M} u(\langle v, w \rangle) \rho \dd \vol \\
                      &= \int_{\M} [\langle \nabla_u v, w \rangle + \langle v, \nabla^*_u w \rangle]  \rho \dd \vol \\
                      &= \mathfrak{g}(\nnabla_{\mathcal{V}_u}\mathcal{V}_v, \mathcal{V}_w) + \mathfrak{g}(\mathcal{V}_v, \nnabla^*_{\mathcal{V}_u}\mathcal{V}_w).
    \end{align*}

    (iii) In accordance with the remark at the beginning of the proof it suffices to prove the equality holds when tested against linear functionals. Let $\mathcal{V}_v,\mathcal{V}_w \in T_\rho\mathcal{P}^\infty(M)$ be given. Directly from the definitions we have
    \begin{align*}
      &\frac{1}{2}(\nnabla_{\mathcal{V}_v}\mathcal{V}_w + \nnabla^*_{\mathcal{V}_v}\mathcal{V}_w)F_\phi(\rho) \\
      &= -\frac{1}{2}\int_{\X} \phi \divg ( \rho \nabla_vw) \dd \vol-\frac{1}{2}\int_{\X} \phi \divg ( \rho \nabla^*_vw) \dd \vol\\
      &= -\int_{\X} \phi \divg \Big[\rho \frac{1}{2}(\nabla_vw + \nabla^*_vw)\Big] \dd \vol.
    \end{align*}
    Then by the corresponding result on the base space, and the definition of the Levi-Civita connection on $\mathcal{P}^\infty(M)$, \eqref{eq:key-ident}, we obtain
    \begin{align*}
      \frac{1}{2}(\nnabla_{\mathcal{V}_v}\mathcal{V}_w + \nnabla^*_{\mathcal{V}_v}\mathcal{V}_w)F_\phi(\rho ) &= -\int_{\X} \phi \divg (\rho \overline{\nabla}_vw) \dd \vol\\
      &= \overline{\nnabla}_{\mathcal{V}_v}\mathcal{V}_wF_\phi(\rho).
    \end{align*}
    \end{proof}

\begin{proof}[Proof of Proposition \ref{cor:christoffel}]
  This is a direct calculation. Indeed we have
  \begin{align*}
    \GGamma_{\mathcal{V}_v,\mathcal{V}_w,\mathcal{V}_u}(\rho) &= \llangle \nnabla_{\mathcal{V}_{v}}\mathcal{V}_{w} , \mathcal{V}_{u} \rrangle_{\rho}  =  \int_{\M} \langle\Pi_\rho(\nabla_{v}w),u\rangle \rho \dd \vol.
  \end{align*}
  We may assume $\Pi_\rho(u) = u$, so that by properties of projection operators
  \begin{align*}
     \GGamma_{\mathcal{V}_v,\mathcal{V}_w,\mathcal{V}_u}(\rho) = \int_{\M} \langle\Pi_\rho(\nabla_{v}w),\Pi_\rho(u)\rangle \rho \dd \vol =  \int_{\M} \langle\nabla_{v}w,\Pi_\rho(u)\rangle \rho \dd \vol.
  \end{align*}
 Using again $\Pi_\rho(u) = u$ completes the proof. The proof for the dual connection is the same.
\end{proof}

\begin{proof}[Proof of Theorem \ref{thm:cubic.expansion}]
  The second order expansion is given by Proposition \ref{prop:BW.metric}. Thus, the result follows by computing the third derivative of $\mathscr{B}(\mu_0,\mu_t)$ at $t = 0$.
  Since $\mu_t$ is assumed a dual displacement we have
  \[\mu_t^{\Y} = [(1-t)D\Omega + t Df]_{\#}\mu_0^{\X},\]
  for an appropriate convex function $f$ defined on $\Omega$. Then in accordance with Lemma \ref{lem:displacement.interpolation.fields} $\dot{\mu_0} = \mathcal{V}_ {\grad \phi}$ for $\phi:= (f-\Omega)\circ\iota$ which in the primal coordinates satisfies $\grad \phi = (f_j-\Omega_j)\partial_j$.  Using Proposition \ref{prop:solving.BW.transport} and the self-dual representation of Bregman divergence we have
  \[\mathscr{B}(\mu_0,\mu_t) = \int_{\X} \left\{ \Omega(x) + \Omega^*((1-t)D\Omega + t Df) - x\cdot((1-t)D\Omega + t Df) \right\}  \dd \mu_0^{\X}.\]
  We compute
  \begin{align}
\label{eq:third-comp1}  \frac{\dd^3}{\dd t^3}\Big\vert_{t=0}\mathscr{B}(\mu_0,\mu_t)  &= \int_{\X} D_{y_iy_jy_k}\Omega^*(D\Omega(x))(\grad \phi)^i(\grad \phi)^j(\grad \phi)^k \ \dd \mu_0^{\X}.
  \end{align}

  On the other hand, using Corollary \ref{cor:christoffel}, we have
  \begin{align}
   \nonumber \mathfrak{T}_{\mathcal{V}_\phi,\mathcal{V}_\phi,\mathcal{V}_\phi} &=  \GGamma^*_{\mathcal{V}_\phi,\mathcal{V}_\phi,\mathcal{V}_\phi} - \GGamma_{\mathcal{V}_\phi,\mathcal{V}_\phi,\mathcal{V}_\phi}\\
\nonumber    &= \int_{\X} \langle(\nabla^*_{\grad \phi} - \nabla_{\grad \phi})\grad \phi, \grad \phi \rangle \dd \mu_0^{\X}\\
\nonumber    &= \int_{\X} (\grad \phi)^i(\grad \phi)^j(\grad \phi)^k \langle \nabla^*_{\partial_i}\partial_j-\nabla_{\partial_i}\partial_j,\partial_k \rangle  \dd \mu_0^{\X}\\
\label{eq:third-comp2}    &= \int_{\X} (\grad \phi)^i(\grad \phi)^j(\grad \phi)^k \Omega^*_{y_iy_jy_k} \dd \mu_0^{\X}.
  \end{align}
  In the final line we have used the coordinate expression for the cubic tensor \cite[eq. 6.32]{A16}. Comparison of \eqref{eq:third-comp1} and \eqref{eq:third-comp2} concludes the proof.
\end{proof}

To prove Theorem \ref{thm:sectional.curvature}, we first derive an expression of the curvature tensor which is similar to \cite[Theorem 1]{L08} though a particular identity using the Levi-Civita connection must be replaced by one involving the primal and dual connections. Set $N_\rho = I - \Pi_\rho$ to be the projection onto the orthogonal component of the tangent space.%

\begin{lemma}[Expression of curvature tensor]\label{lem:curv-tensor}
For $\mathcal{V}_1,\dots,\mathcal{V}_4$ as above we have the following identity relating the primal curvature on $\mathcal{P}^\infty(M)$ to the curvature on $M$ and the projection operators:
  \begin{align}
    \label{eq:curvature-expansion}    \llangle \mathfrak{R} &(\mathcal{V}_{1},\mathcal{V}_{2})\mathcal{V}_{3},\mathcal{V}_{4}\rrangle = \int_{\M}\Big[\langle R (\Phi_1,\Phi_2)\Phi_3,\Phi_4\rangle\\
    \nonumber                                          &- \langle N_\rho(\nabla _{\Phi_2}\Phi_3),N_{\rho}(\nabla _{\Phi_4}\Phi_1)\rangle+ \langle N_\rho(\nabla _{\Phi_1}\Phi_3),N_\rho(\nabla _{\Phi_4}\Phi_2)\rangle \\
   \nonumber &-\langle N_p(\nabla _{\Phi_2}\Phi_1),N_\rho(\nabla _{\Phi_4}\Phi_3)\rangle+ \langle N_\rho(\nabla _{\Phi_1}\Phi_2),N_\rho(\nabla _{\Phi_4}\Phi_3)\rangle\Big] \rho \dd \vol.
  \end{align}
  An analogous expression holds for the dual curvature tensor $\mathfrak{R}^*$.
\end{lemma}
\begin{proof}[Proof of Lemma \ref{lem:curv-tensor}]
  The proof is a direct computation of the terms in \eqref{eq:curv-def}.
  First, by the conjugacy relation \eqref{eq:wass-conjugate} we have
  \begin{align}
\nonumber    \langle\nnabla _{\mathcal{V}_{1}}\nnabla _{\mathcal{V}_{2}}\mathcal{V}_{3},\mathcal{V}_{4} \rangle &= \mathcal{V}_{1}(\llangle \nnabla _{\mathcal{V}_{2}}\mathcal{V}_{3},\mathcal{V}_{4}\rrangle) - \llangle \nnabla _{\mathcal{V}_{2}}\mathcal{V}_{3},\nnabla^*_{\mathcal{V}_{1}}\mathcal{V}_{4}\rrangle\\
                                         \nonumber        &= -\int_{\M}\langle\nabla _{\Phi_2}\Phi_3,\Phi_4\rangle\text{div}(\rho\Phi_1)\dd \vol \\
\nonumber    &\quad \quad- \int_{\M}\langle\Pi_\rho(\nabla _{\Phi_2}\Phi_3),\Pi_\rho(\nabla^*_{\Phi_1}\Phi_4)\rangle \rho \dd \vol.
\end{align}
  We integrate by parts in the first integral (using the conjugacy of the primal and dual connections), and note that in the second integrand only one of the $\Pi_\rho$ is necessary, to obtain
  \begin{align}
  \nonumber    \langle\nnabla _{\mathcal{V}_{1}}\nnabla _{\mathcal{V}_{2}}\mathcal{V}_{3},\mathcal{V}_{4} \rangle &= \int_{\M}\big[\langle \nabla _{\Phi_1}\nabla _{\Phi_2}\Phi_3 , \Phi_4\rangle +\langle \nabla _{\Phi_2}\Phi_3, \nabla^*_{\Phi_1}\Phi_4\rangle\big] \rho \dd \vol\\
\nonumber  &\quad\quad -\int_{\M}\langle\Pi_\rho(\nabla _{\Phi_2}\Phi_3),\nabla^*_{\Phi_1}\Phi_4\rangle \rho \dd \vol\\
                                          &= \int_{\M}\langle\nabla _{\Phi_1}\nabla _{\Phi_2}\Phi_3,\Phi_4\rangle  +\langle N_\rho(\nabla _{\Phi_2}\Phi_3),N_\rho(\nabla^*_{\Phi_1}\Phi_4)\rangle \rho \dd \vol.   \label{eq:curv-t1}
  \end{align}
In the final line we used that for an orthogonal projection $\langle N_\rho(v),w\rangle = \langle N_\rho(v),N_\rho(w)\rangle$.  Clearly we also obtain
  \begin{align}
    \label{eq:curv-t2}
    \langle\nnabla _{\mathcal{V}_{2}}\nnabla _{\mathcal{V}_{1}}\mathcal{V}_{3},\mathcal{V}_{4} \rangle&=  \int_{\M}\langle\nabla _{\Phi_2}\nabla _{\Phi_1}\Phi_3,\Phi_4\rangle  +\langle N_\rho(\nabla _{\Phi_1}\Phi_3),N_\rho(\nabla^*_{\Phi_2}\Phi_4)\rangle \rho \dd \vol.
  \end{align}

Finally, employing the symmetry of the Riemannian Hessian we compute
  \begin{align}
    \nonumber \langle\nnabla _{\llbracket\mathcal{V}_{1},\mathcal{V}_{2}\rrbracket}\mathcal{V}_{3},\mathcal{V}_{4}\rangle &= \int_{\M}\langle\nabla _{\Pi_\rho([\Phi_1,\Phi_2])}\Phi_3,\Phi_4\rangle \rho \dd \vol\\
    \nonumber &= \int_{\M} \langle\nabla _{\Phi_4}\Phi_3,\Pi_\rho(\nabla_{\Phi_1}\Phi_2 - \nabla_{\Phi_2}\Phi_1)\rangle \rho \dd \vol\\
    \label{eq:curv-t3}   &= \int_{\M}\langle\nabla _{[\Phi_1,\Phi_2]}\Phi_3,\Phi_4\rangle- \langle N_\rho(\nabla_{\Phi_1}\Phi_2 - \nabla_{\Phi_2}\Phi_1),\nabla _{\Phi_4}\Phi_3\rangle \rho \dd \vol.
                             \end{align}
                             Substituting \eqref{eq:curv-t1}, \eqref{eq:curv-t2}, and  \eqref{eq:curv-t3} into \eqref{eq:curv-def} proves the Theorem in conjunction with the following identity \eqref{eq:raise-lower}. We claim
                             \begin{align}
                              N_\rho(\nabla _{\Phi_2}\Phi_1) = - N_\rho(\nabla^*_{\Phi_1}\Phi_2). \label{eq:raise-lower}
                             \end{align}
                             Note for the Levi--Civita connection this reduces to the anti-symmetry property $N_\rho(\overline{\nabla} _{\Phi_2}\Phi_1) = - N_\rho(\overline{\nabla}_{\Phi_1}\Phi_2)$. To derive \eqref{eq:raise-lower} note
                             \begin{equation}
                               \label{eq:grad-ident}
                               \grad \langle\Phi_1,\Phi_2\rangle = \overline{\nabla}_{\Phi_2}\Phi_1 + \overline{\nabla}_{\Phi_1}\Phi_2 = \nabla _{\Phi_2}\Phi_1 + \nabla^*_{\Phi_1}\Phi_2.
                            \end{equation}
  Here the second equality follows by a direct calculation in coordinates using the Christoffel symbols for the Levi-Civita and dual connections. Then because the right hand side of \eqref{eq:grad-ident} is the gradient of a smooth function it vanishes under the normal projection and we obtain \eqref{eq:raise-lower}.                            \end{proof}

\begin{proof}[Proof of Theorem \ref{thm:sectional.curvature}]
The proof for $d = 1$ is a straightforward consequence of \eqref{eq:curvature-expansion} and that in one dimension $\Pi_\rho = I$. Indeed, this follows because in dimension one each smooth vector field is the gradient of a smooth function. Subsequently $N_\rho = 0$, which combined with dual flatness ($R = R^{*} = 0$) and \eqref{eq:curvature-expansion} establishes part (i).

Next, to prove the result for $d \geq 2$, we claim,
\begin{equation}
  \label{eq:curvature-identity}
  \llangle \mathfrak{R}^*(\mathcal{V}_{1},\mathcal{V}_{2})\mathcal{V}_{3},\mathcal{V}_{4}\rrangle = -\llangle \mathfrak{R} (\mathcal{V}_{1},\mathcal{V}_{2})\mathcal{V}_{4},\mathcal{V}_{3}\rrangle.
\end{equation}
This identity is a consequence of \eqref{eq:curvature-expansion}, the corresponding identity on $M$, and \eqref{eq:raise-lower}. Indeed by substituting $\langle R^*(\Phi_1,\Phi_2)\Phi_3,\Phi_4 \rangle = -\langle R(\Phi_1,\Phi_2)\Phi_4 ,\Phi_3 \rangle$ (proved in \cite[Proposition 8.14]{CU14}) into \eqref{eq:curvature-expansion} and using \eqref{eq:raise-lower} to exchange $\nabla$ for $\nabla^*$ we obtain
\begin{align*}
   \llangle \mathfrak{R} &(\mathcal{V}_{1},\mathcal{V}_{2})\mathcal{V}_{3},\mathcal{V}_{4}\rrangle = \int_{\M}\Big[-\langle R^* (\Phi_1,\Phi_2)\Phi_4,\Phi_3\rangle\\
    \nonumber                                          &- \langle N_\rho(\nabla^* _{\Phi_3}\Phi_2),N_{\rho}(\nabla^* _{\Phi_1}\Phi_4)\rangle+ \langle N_\rho(\nabla^* _{\Phi_3}\Phi_1),N_\rho(\nabla^* _{\phi_2}\Phi_4)\rangle \\
  \nonumber &-\langle N_p(\nabla^* _{\Phi_1}\Phi_2),N_\rho(\nabla^* _{\Phi_3}\Phi_4)\rangle+ \langle N_\rho(\nabla^* _{\Phi_2}\Phi_1),N_\rho(\nabla^* _{\Phi_3}\Phi_4)\rangle\Big] \rho \dd \vol \\
  &=-\llangle \mathfrak{R}^* (\mathcal{V}_{1},\mathcal{V}_{2})\mathcal{V}_{4},\mathcal{V}_{3}\rrangle
\end{align*}
Here the final inequality is a consequence of the dual version of Lemma \ref{lem:curv-tensor}.

Choosing $\mathcal{V}_3=\mathcal{V}_2,\mathcal{V}_4=\mathcal{V}_1$ and employing \eqref{eq:curvature-identity} we have
\begin{align*}
  \mathfrak{K}^*(\mathcal{V}_1,\mathcal{V}_2) &=\frac{\llangle \mathfrak{R}^*(\mathcal{V}_{1},\mathcal{V}_{2})\mathcal{V}_{2},\mathcal{V}_{1} \rrangle}{\llangle \mathcal{V}_1,\mathcal{V}_1 \rrangle  \ \llangle \mathcal{V}_2,\mathcal{V}_2\rrangle  - \llangle \mathcal{V}_1,\mathcal{V}_2 \rrangle^2}= \frac{-\llangle \mathfrak{R}(\mathcal{V}_{1},\mathcal{V}_{2})\mathcal{V}_{1},\mathcal{V}_{2} \rrangle}{\llangle \mathcal{V}_1,\mathcal{V}_1 \rrangle  \ \llangle \mathcal{V}_2,\mathcal{V}_2\rrangle  - \llangle \mathcal{V}_1,\mathcal{V}_2 \rrangle^2},
\end{align*}
where the second to last equality uses skew symmetry in the first and second argument.
\end{proof}

\section*{Acknowledgment}
We thank Robert McCann and Soumik Pal for helpful discussions. Some preliminary results were presented in the session ``Optimal Transport and Applications" of the PRIMA Congress 2022, and we thank the organizers and participants for their comments. Cale Rankin is supported by the Fields Institute for Research in Mathematical Sciences. He thanks the institute and the organizers of the Thematic Program on Nonsmooth Riemannian and Lorentzian Geometry. Leonard Wong's research is partially supported by an NSERC Discovery Grant (RGPIN-2019-04419) and a Connaught New Researcher Award. %
\bibliographystyle{abbrv}
\bibliography{geometry.ref,applications}

\end{document}